\renewcommand{\uppercasenonmath}[1]{}
\numberwithin{equation}{section} \theoremstyle{plain}
\newtheorem*{thm*}{Main Theorem}
\newtheorem{thm}{Theorem}[section]
\newtheorem{cor}[thm]{Corollary}
\newtheorem*{cor*}{Corollary}
\newtheorem{lem}[thm]{Lemma}
\newtheorem*{lem*}{Lemma}
\newtheorem*{fact*}{Fact}
\newtheorem*{nota*}{Notation}
\newtheorem{prop}[thm]{Proposition}
\newtheorem*{prop*}{Proposition}
\newtheorem{rem}[thm]{Remark}
\newtheorem*{rem*}{Remark}
\newtheorem*{observation*}{Observation}
\newtheorem{exa}[thm]{Example}
\newtheorem*{exa*}{Example}
\newtheorem{df}[thm]{Definition}
\newtheorem*{df*}{Definition}
\newtheorem*{con*}{Construction}
\renewcommand{\geq}{\geqslant}
\renewcommand{\leq}{\leqslant}
\begin{document}
\begin{center}
{\large  \bf Another version of cosupport for complexes}

\vspace{0.5cm}  Xiaoyan Yang\\
%\bigskip
Department of Mathematics, Northwest Normal University, Lanzhou 730070,
China
E-mail: yangxy@nwnu.edu.cn
\end{center}

\bigskip
\centerline { \bf  Abstract}
%\bigskip
\leftskip10truemm \rightskip10truemm \noindent
The goal of the article is to develop a theory dual to that of support in the derived category $\mathrm{D}(R)$, this is done by
introducing another versions of the ``big'' and ``small'' cosupport for complexes that are differ from the cosupport in [J. Reine Angew. Math. 673 (2012) 161--207]. We provide some properties for cosupport that are similar--or rather dual--to those of support for complexes, study some relations between the ``big'' and ``small'' cosupport and give some computations and comparisons of the ``small'' support and cosupport. Finally, we investigate the dual notion of associated primes of complexes.\\
\vbox to 0.3cm{}\\
{\it Key Words:} complex; support; cosupport; coassociated prime\\
{\it 2020 Mathematics Subject Classification:}  13D07; 13D09; 13E05

\leftskip0truemm \rightskip0truemm
\bigskip
\section* { \bf Introduction}
%\bigskip
Support is a fundamental concept in commutative algebras, which provides a geometric approach for
studying various algebraic structures. Based on certain localization functors on compactly generated triangulated categories, Benson, Iyengar and Krause \cite{BIK,BIK1,BIK2} developed the theories of support and cosupport. Suitably specialized their approach recovers the support theory of Foxby \cite{F}
and Neeman \cite{N1,N} for commutative noetherian rings, the theory of Avramov and Buchweitz
for complete intersection local rings \cite{A,AB}. Their works also play a pivotal role on a classification theorem for the thick subcategories of modules and the localizing subcategories of the stable module category (see \cite{HK} and \cite{BIK4,BIK3}).

 Despite the many ways in which cosupport is dual to the notion of support, cosupport seems to be more mysterious, even in the setting of a commutative noetherian ring. In general the theory of cosupport is not completely satisfactory because this construction is not as
well understood as support. Richardson \cite{R} introduced the concept of cosupport of modules and proved that the cosupport have properties dual to those of support.

One purpose of this
paper is to extend the concept of cosupport in \cite{R} to unbounded complexes.
We focus on the duality functor $D_R(-)=\mathrm{Hom}_R(-,\bigoplus E(R/\mathfrak{m}))$ the sum running over all maximal ideals $\mathfrak{m}$ of $R$, where $E(R/\mathfrak{m})$ is the injective envelope
 of $R/\mathfrak{m}$. For an $R$-complex $M$, the co-localization of $M$ relative
to a prime ideal $\mathfrak{p}$ is the $R_\mathfrak{p}$-complex\begin{center}${^\mathfrak{p}}M=\mathrm{Hom}_{R_\mathfrak{p}}(D_R(M)_\mathfrak{p},
E_{R_\mathfrak{p}}(k(\mathfrak{p})))
\simeq\mathrm{Hom}_{R}(D_R(M),
E_{R}(R/\mathfrak{p}))$.\end{center}
In Section 2, we define the set $\mathrm{coSupp}_RM$ of ``big'' cosupport of $M$ to be the set of prime ideals $\mathfrak{p}$ such that ${^\mathfrak{p}}M\not\simeq0$.
 One of the main results of this work is that $\mathrm{coSupp}_RM$ can be detected by the big cosupport of the homology of $M$. We show that

 \vspace{2mm} \noindent{\bf Theorem A.}\label{Th1.4} {\it{For any $R$-complex $M$, one has that \begin{center}$\mathrm{coSupp}_RM=\bigcup_{i\in\mathbb{Z}}\mathrm{coSupp}_R\mathrm{H}_i(M)$.\end{center}In particular, $M\not\simeq0$ if and only if $\mathrm{coSupp}_RM\neq\emptyset$.}}

 We provide the following (partial) duality between the big cosupport and support.

 \vspace{2mm} \noindent{\bf Theorem B.}\label{Th1.4} {\it{Let $M$ be an $R$-complex.

$\mathrm{(1)}$ $\mathfrak{p}\in\mathrm{coSupp}_RM$ if and only if $\mathfrak{p}\in\mathrm{Supp}_RD_R(M)$.

$\mathrm{(2)}$ If $\mathfrak{p}\in\mathrm{Supp}_RM$, then $\mathfrak{p}\in\mathrm{coSupp}_RD_R(M)$. The converse holds when $M\in\mathrm{D}^\mathrm{n}(R)$ $($i.e. each $\mathrm{H}_i(M)$ is noetherian$)$.}}

By an example we show that the above notion is not the same as the one in \cite{WW}.

Section 3 investigates the ``small'' cosupport of complexes \begin{center}$\mathrm{cosupp}_RM:=\{\mathfrak{p}\in\mathrm{Spec}R\hspace{0.03cm}|\hspace{0.03cm}
\mathrm{RHom}_R(R/\mathfrak{p},{^\mathfrak{p}}M)\not\simeq0\}$,\end{center} and proves some properties for ``small'' cosupport that are similar to those of ``small'' support and ``big'' cosupport in Section 2.

In Section 4, we study some relations between the ``big'' and ``small'' cosupport, and show $\mathrm{cosupp}_RM\subseteq\mathrm{coSupp}_RM$. By an example we show that the inclusion may be strict.

 Section 5 is devoted to provide some computations of the ``small'' support and ``small'' cosupport, and study the relation between $\mathrm{cosupp}_RM$ and $\mathrm{cosupp}_R\mathrm{H}(M)$. As an application, we give the comparison of the support and cosupport.

 The concept of coassociated primes of complexes is introduced in the last section, and an extension of Nakayama lemma is given.

\bigskip
\section{\bf Preliminaries}
Unless stated to the contrary we assume throughout this paper that $R$ is a commutative noetherian ring which is not necessarily local.

This section is devoted to recalling some notions and basic consequences
for use throughout this paper. For terminology we shall follow \cite{CF} and \cite{WW}.

\vspace{2mm}
{\bf Complexes.} The category of chain $R$-complexes is denoted by $\mathrm{C}(R)$.
The derived category of $R$-complexes is denoted by $\mathrm{D}(R)$.

Let $M$ be an object in $\mathrm{C}(R)$ and $n\in\mathbb{Z}$.
The soft right-truncation, $\sigma_{\geq n}(M)$, of $M$ at $n$ and the soft left-truncation, $\sigma_{\leq n}(M)$, of $M$ at $n$ are given by
\begin{center}$\sigma_{\geq n}(M):\ \cdots\longrightarrow M_{n+2}\stackrel{d_{n+2}}\longrightarrow M_{n+1}\stackrel{d_{n+1}}\longrightarrow
\textrm{Ker}d_n\longrightarrow 0$,\end{center}
\begin{center}$\sigma_{\leq n}(M):\ 0\longrightarrow \textrm{Coker}d_{n+1}\stackrel{\overline{d}_{n}}\longrightarrow M_{n-1}\stackrel{d_{n-1}}\longrightarrow
M_{n-2}\longrightarrow\cdots$.\end{center}The differential $\overline{d}_{n}$ is the induced morphism on residue classes.

An $R$-complex $M$ is called bounded
above if $\mathrm{H}_n(M)=0$ for all $n\gg0$, bounded below if $\mathrm{H}_n(M)=0$ for all $n\ll0$, and
bounded if it both bounded above and bounded below. The full triangulated subcategories
consisting of bounded above, bounded below and bounded $R$-complexes are denoted by
$\mathrm{D}_-(R),\mathrm{D}_+(R)$ and $\mathrm{D}_\mathrm{b}(R)$.
We denote by $\mathrm{D}^\mathrm{n}(R)$ the full triangulated subcategory of $\mathrm{D}(R)$ consisting of $R$-complexes $M$ such that $\mathrm{H}_i(M)$ are
noetherian $R$-modules for all $i$, and denote by $\mathrm{D}^\mathrm{a}(R)$ the full triangulated subcategory of $\mathrm{D}(R)$ consisting of $R$-complexes $M$ such that $\mathrm{H}_i(M)$ are
artinian $R$-modules for all $i$.
For $M\in\mathrm{D}(R)$,
\begin{center}$\mathrm{inf}M:=\mathrm{inf}\{n\in\mathbb{Z}\hspace{0.03cm}|\hspace{0.03cm}\mathrm{H}_n(M)\neq0\},\quad \mathrm{sup}M:=\mathrm{sup}\{n\in\mathbb{Z}\hspace{0.03cm}|\hspace{0.03cm}\mathrm{H}_n(M)\neq0\}$.\end{center}

We write $\mathrm{Spec}R$ for the set of
prime ideals of $R$ and $\mathrm{Max}R$ for the set of
maximal ideals of $R$. For an ideal $\mathfrak{a}$ in $R$ and $\mathfrak{p}\in\mathrm{Spec}R$, we set \begin{center}$\mathrm{U}(\mathfrak{p})=\{\mathfrak{q}\in \mathrm{Spec}R\hspace{0.03cm}|\hspace{0.03cm} \mathfrak{q}\subseteq\mathfrak{p}\}$ and $\mathrm{V}(\mathfrak{a})=\{\mathfrak{q}\in\textrm{Spec}R\hspace{0.03cm}|\hspace{0.03cm}
\mathfrak{a}\subseteq\mathfrak{q}\}$.\end{center}
Denote $D_R(-)=\mathrm{Hom}_R(-,\bigoplus_{\mathfrak{m}\in\mathrm{Max}R}E(R/\mathfrak{m}))$ and $D_\mathfrak{m}(-)=\mathrm{Hom}_R(-,E(R/\mathfrak{m}))$ for $\mathfrak{m}\in\mathrm{Max}R$. Let $S$ be a multiplicatively closed subset of $R$. For an $R$-complex $M$, the co-localization of $M$ relative
to $S$ is the $S^{-1}R$-module $S_{-1}M=D_{S^{-1}R}(S^{-1}D_R(M))$. If $S=R-\mathfrak{p}$
for some $\mathfrak{p}\in\mathrm{Spec}R$, we write ${^\mathfrak{p}}M$ for $S_{-1}M$.
We also set $M^\thicksim=\prod_{\mathfrak{m}\in\mathrm{Max}R}D_\mathfrak{m}(D_\mathfrak{m}(M))$.

\vspace{2mm}
{\bf Support and cosupport.} The ``small'' support of an $R$-complex $M$ is the set \begin{center}$\mathrm{supp}_RM=\{\mathfrak{p}\in\mathrm{Spec}R\hspace{0.03cm}|\hspace{0.03cm}k(\mathfrak{p})\otimes^\mathrm{L}_RM\not\simeq0\}$,\end{center}where $k(\mathfrak{p})=R_\mathfrak{p}/\mathfrak{p}R_\mathfrak{p}$.  The ``big'' support of $M$ is the set \begin{center}$\mathrm{Supp}_RM=\{\mathfrak{p}\in\mathrm{Spec}R\hspace{0.03cm}|\hspace{0.03cm}
M_\mathfrak{p}\not\simeq0\}$.\end{center}It follows from \cite[6.4.2.1, 6.1.3.2]{CF} that $\mathrm{supp}_RM\subseteq\mathrm{Supp}_RM=\bigcup_{i\in\mathbb{Z}}\mathrm{Supp}_R\mathrm{H}_i(M)$.

The ``small'' cosupport of an $R$-complex $M$ is the set \begin{center}$\mathrm{co\textrm{-}supp}_RM=\{\mathfrak{p}\in\mathrm{Spec}R\hspace{0.03cm}|\hspace{0.03cm}
\mathrm{RHom}_R(k(\mathfrak{p}),M)\not\simeq0\}$.\end{center}  The ``big'' cosupport of $M$ is the set \begin{center}$\mathrm{Co\textrm{-}supp}_RM=\{\mathfrak{p}\in\mathrm{Spec}R\hspace{0.03cm}|\hspace{0.03cm}
\mathrm{RHom}_R(R_\mathfrak{p},M)\not\simeq0\}$.\end{center}It follows from \cite[Corollary 4.6]{WW} that $\mathrm{co\textrm{-}supp}_RM\subseteq\mathrm{Co\textrm{-}supp}_RM$.

Richardson \cite{R} defined the cosupport of an $R$-module $K$, $\mathrm{coSupp}_RK$, as the set \begin{center}$\mathrm{coSupp}_RK:=\{\mathfrak{p}\in\mathrm{Spec}R\hspace{0.03cm}|\hspace{0.03cm}
{^\mathfrak{p}}K\neq0\}$.\end{center}Yassemi \cite{Y} introduced the cocyclic modules and another cosupport of modules. An $R$-module $L$ is cocyclic if $L$ is a submodule of $E(R/\mathfrak{m})$ for some $\mathfrak{m}\in\mathrm{Max}R$. The cosupport of $K$ is defined as the set
of prime ideals $\mathfrak{p}$ such that there is a cocyclic homomorphic image $L$
of $K$ such that $\mathfrak{p}\subseteq\mathrm{Ann}_RL$, the annihilator of $L$, and denoted this set by $\mathrm{Cosupp}_RK$.

\begin{lem}\label{lem:0.0}{\it{Let $K$ be an $R$-module and $\mathfrak{p}$ a point in $\mathrm{Spec}R$.

$\mathrm{(1)}$ If $\mathfrak{p}\in\mathrm{Cosupp}_RK$, then $\mathfrak{p}\in\mathrm{coSupp}_RK$.

$\mathrm{(2)}$ If $R$ is a semi-local ring or $K$ is a finitely generated $R$-module, then $\mathfrak{p}\in\mathrm{coSupp}_RK$ if and only if $\mathfrak{p}\in\mathrm{Cosupp}_RK$.}}
\end{lem}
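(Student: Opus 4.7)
The approach rests on a single identification. Since $E_{R_\mathfrak{p}}(k(\mathfrak{p}))$ is a faithful cogenerator of the category of $R_\mathfrak{p}$-modules (given any nonzero $n$ in an $R_\mathfrak{p}$-module $N$, the cyclic submodule $R_\mathfrak{p}n$ surjects onto $k(\mathfrak{p})$, and injectivity extends the resulting map to all of $N$), one has ${^\mathfrak{p}}K=\mathrm{Hom}_{R_\mathfrak{p}}(D_R(K)_\mathfrak{p},E_{R_\mathfrak{p}}(k(\mathfrak{p})))\ne 0$ if and only if $D_R(K)_\mathfrak{p}\ne 0$. In particular $\mathrm{coSupp}_RK=\mathrm{Supp}_RD_R(K)$, and both parts of the lemma pivot on this fact.

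For (1), start from a cocyclic homomorphic image $L$ of $K$ witnessing $\mathfrak{p}\in\mathrm{Cosupp}_RK$ and fix an embedding $L\hookrightarrow E(R/\mathfrak{m})$ for some $\mathfrak{m}\in\mathrm{Max}R$. Composing the surjection $K\twoheadrightarrow L$ with this embedding and then with the canonical summand inclusion into $\bigoplus_{\mathfrak{n}\in\mathrm{Max}R}E(R/\mathfrak{n})$ produces an element $\phi\in D_R(K)$ with $\mathrm{im}\,\phi=L$. A short calculation shows that for $s\in R$ one has $s\phi=0$ if and only if $sL=0$, so the annihilator of $\phi$ in $D_R(K)$ is exactly $\mathrm{Ann}_RL$. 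The hypothesis relating $\mathfrak{p}$ and $\mathrm{Ann}_RL$ (read as $\mathrm{Ann}_RL\cap(R\setminus\mathfrak{p})=\emptyset$, which is the condition actually preserved under localization) then forces $\phi/1\ne 0$ in $D_R(K)_\mathfrak{p}$, and hence $\mathfrak{p}\in\mathrm{coSupp}_RK$.

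For the reverse direction needed in (2), assume additionally that $R$ is semi-local or $K$ is finitely generated, and $\mathfrak{p}\in\mathrm{coSupp}_RK$. Then $D_R(K)_\mathfrak{p}\ne 0$, so some $\phi\colon K\to\bigoplus_{\mathfrak{m}}E(R/\mathfrak{m})$ satisfies $\phi/1\ne 0$. Each hypothesis supplies the decomposition $\mathrm{Hom}_R(K,\bigoplus_{\mathfrak{m}}E(R/\mathfrak{m}))\cong\bigoplus_{\mathfrak{m}}\mathrm{Hom}_R(K,E(R/\mathfrak{m}))$---either because the index set is finite (semi-local case) or because Hom from a finitely generated module commutes with direct sums---so $\phi$ splits as $\sum\iota_{\mathfrak{m}}\phi_{\mathfrak{m}}$ and some component $\phi_{\mathfrak{m}_0}$ must itself satisfy $\phi_{\mathfrak{m}_0}/1\ne 0$. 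Its image $L:=\mathrm{im}\,\phi_{\mathfrak{m}_0}\subseteq E(R/\mathfrak{m}_0)$ is then a cocyclic quotient of $K$, and the annihilator computation from (1) run in reverse places $\mathfrak{p}$ in $\mathrm{Cosupp}_RK$.

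The principal technical obstacle is precisely the interchange $\mathrm{Hom}_R(K,\bigoplus_{\mathfrak{m}}E(R/\mathfrak{m}))\cong\bigoplus_{\mathfrak{m}}\mathrm{Hom}_R(K,E(R/\mathfrak{m}))$ used in (2), which is only available under one of the stated hypotheses. Without it, a nonzero class in $D_R(K)_\mathfrak{p}$ need not be detected by any individual summand and the recipe for extracting a cocyclic quotient breaks down; that is the structural reason the converse appears only conditionally. Everything else in the argument is mechanical: the faithful-cogenerator property of the Matlis module and the identification of the annihilator of $\phi$ with $\mathrm{Ann}_RL$ do all the remaining work.
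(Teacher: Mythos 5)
Your proof is correct, and it is in effect a self-contained, element-level unpacking of what the paper accomplishes by citation. The paper proves (1) by applying $\mathrm{Hom}_R(-,E(R/\mathfrak{p}))$ to the split inclusion $D_\mathfrak{m}(K)\hookrightarrow D_R(K)$ and then invoking the remark after Theorem~3.8 of Yassemi to get $\mathrm{Hom}_R(\bigoplus_\mathfrak{m}D_\mathfrak{m}(K),E(R/\mathfrak{p}))\neq0$; you instead work directly inside $D_R(K)_\mathfrak{p}$, construct the witnessing element $\phi$ from the cocyclic quotient, and identify $\mathrm{Ann}_R\phi=\mathrm{Ann}_RL$. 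For (2) both arguments hinge on exactly the same step: under either hypothesis one has $D_R(K)\cong\bigoplus_\mathfrak{m}D_\mathfrak{m}(K)$, after which the paper again cites Yassemi while you extract the cocyclic quotient $L=\mathrm{im}\,\phi_{\mathfrak{m}_0}$ explicitly. Your version buys transparency: it makes the equality $\mathrm{coSupp}_RK=\mathrm{Supp}_RD_R(K)$ (via the cogenerator property of $E_{R_\mathfrak{p}}(k(\mathfrak{p}))$) visible from the outset, whereas the paper reaches this identification only later (Theorem~2.5, via Richardson). The paper's version is shorter because the heavy lifting is delegated to Yassemi's cited result, which is essentially the computation you spelled out.

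One further remark: you were right to read the defining condition as $\mathrm{Ann}_RL\subseteq\mathfrak{p}$. As written, the paper's recollection of Yassemi's definition says ``$\mathfrak{p}\subseteq\mathrm{Ann}_RL$'', but that inclusion is reversed: with it, $\mathrm{Cosupp}$ would be generization-closed rather than specialization-closed, the containment $\mathrm{Cosupp}_RK\subseteq\mathrm{coSupp}_RK\subseteq\mathrm{V}(\mathrm{Ann}_RK)$ asserted elsewhere in the paper would fail (e.g. for $K=\mathbb{Z}/p^2\mathbb{Z}$ over $\mathbb{Z}$), and the localization step in (1) would not go through. Your correction is what the lemma actually requires and is Yassemi's intended definition.
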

\begin{proof} (1) The exact sequence $0\rightarrow E(R/\mathfrak{m})\rightarrow\bigoplus_{\mathfrak{m}\in\mathrm{Max}R}E(R/\mathfrak{m})\rightarrow
\bigoplus_{\mathfrak{m}\neq\mathfrak{m'}\in\mathrm{Max}R}E(R/\mathfrak{m'})\rightarrow0$ induces the following short exact sequence \begin{center}$0\rightarrow \mathrm{Hom}_R(\mathrm{Hom}_R(K,\bigoplus_{\mathfrak{m}\neq\mathfrak{m'}\in\mathrm{Max}R}E(R/\mathfrak{m'})),
E(R/\mathfrak{p}))\rightarrow\mathrm{Hom}_R(D_R(K),
E(R/\mathfrak{p}))\rightarrow
\mathrm{Hom}_R(D_\mathfrak{m}(K),E(R/\mathfrak{p}))\rightarrow0$.\end{center}By the remark after \cite[Theorem 3.8]{Y}, one has $\mathrm{Hom}_{R}(\bigoplus_{\mathfrak{m}\in\mathrm{Max}R}D_\mathfrak{m}(K),
E(R/\mathfrak{p}))\neq0$, then $\mathrm{Hom}_{R}(D_\mathfrak{m}(K),E(R/\mathfrak{p}))\neq0$ for some $\mathfrak{m}$, and hence ${^\mathfrak{p}}K\neq0$ and $\mathfrak{p}\in\mathrm{coSupp}_RK$.

(2) If $R$ is semi-local or $K$ is finitely generated, then ${^\mathfrak{p}}K\cong\mathrm{Hom}_{R}(\bigoplus_{\mathfrak{m}\in\mathrm{Max}R}D_\mathfrak{m}(K),
E(R/\mathfrak{p}))$. Hence the equivalence follows from the remark after \cite[Theorem 3.8]{Y}.
\end{proof}

\begin{lem}\label{lem:0.1}{\it{Let $K$ be an $R$-module and $\mathfrak{p}$ a point in $\mathrm{Spec}R$. If $\mathfrak{p}\in\mathrm{Supp}_RK$, then $\mathfrak{p}\in\mathrm{coSupp}_RD_R(K)$. The converse holds when $K$ is finitely generated.}}
\end{lem}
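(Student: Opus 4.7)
\smallskip

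\noindent\textbf{Plan for the proof.} For the first assertion I plan to use the biduality morphism $\eta_K\colon K\to D_R(D_R(K))$ defined by $\eta_K(x)(f)=f(x)$: the idea is to show $\eta_K$ is injective, localize it at $\mathfrak{p}$, and then invoke the cogenerator property of the injective hull of $k(\mathfrak{p})$ over the local ring $R_\mathfrak{p}$.

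First I would verify that $E:=\bigoplus_{\mathfrak{m}\in\mathrm{Max}R}E(R/\mathfrak{m})$ is an injective cogenerator of the category of $R$-modules: given any $0\neq x\in K$, the submodule $Rx\cong R/\mathrm{Ann}_R(x)$ surjects onto $R/\mathfrak{m}\hookrightarrow E(R/\mathfrak{m})$ for any maximal $\mathfrak{m}\supseteq\mathrm{Ann}_R(x)$, and by injectivity this nonzero map on $x$ extends to all of $K$. Consequently $\eta_K$ is injective, and the exactness of localization gives an embedding $K_\mathfrak{p}\hookrightarrow D_R(D_R(K))_\mathfrak{p}$. If $\mathfrak{p}\in\mathrm{Supp}_RK$ then the right-hand side is nonzero; and because $R_\mathfrak{p}$ is local, $E_{R_\mathfrak{p}}(k(\mathfrak{p}))$ is itself a cogenerator of $R_\mathfrak{p}$-modules by the same argument, whence $\mathrm{Hom}_{R_\mathfrak{p}}\bigl(D_R(D_R(K))_\mathfrak{p},E_{R_\mathfrak{p}}(k(\mathfrak{p}))\bigr)={^\mathfrak{p}}D_R(K)\neq0$.

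For the converse, assuming $K$ is finitely generated, I would argue the contrapositive via annihilators. If $K_\mathfrak{p}=0$, then the equality $\mathrm{Supp}_RK=\mathrm{V}(\mathrm{Ann}_RK)$ (valid for finitely generated $K$) forces $\mathrm{Ann}_RK\not\subseteq\mathfrak{p}$; pick $s\in\mathrm{Ann}_RK\setminus\mathfrak{p}$. A direct check shows that $sK=0$ implies $sD_R(K)=0$, since $(sf)(x)=f(sx)=0$ for all $f\in D_R(K)$ and $x\in K$; iterating, $sD_R(D_R(K))=0$ as well. Localizing at $\mathfrak{p}$, where $s$ becomes a unit, forces $D_R(D_R(K))_\mathfrak{p}=0$, hence ${^\mathfrak{p}}D_R(K)=0$ and $\mathfrak{p}\notin\mathrm{coSupp}_RD_R(K)$.

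The one slightly delicate point, which I would spell out, is verifying that the sum $E$ taken only over \emph{maximal} ideals is a cogenerator of $\mathrm{Mod}(R)$ when $R$ has infinitely many maximal ideals (the textbook injective cogenerator is indexed by all primes); everything else is formal. The necessity of the finite generation hypothesis in the converse is transparent: it is used only to identify $\mathrm{Supp}_RK$ with $\mathrm{V}(\mathrm{Ann}_RK)$, and without it one has no control over annihilators of $D_R(K)$ away from $\mathfrak{p}$.
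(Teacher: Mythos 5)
Your argument is correct, and for the forward implication it takes a genuinely different route than the paper's. The paper proves the forward direction by citing Yassemi's machinery: from $\mathfrak{p}\in\mathrm{Supp}_RK$ it invokes \cite[Lemma 2.8]{Y} to obtain $\mathfrak{p}\in\mathrm{Cosupp}_RD_\mathfrak{m}(K)$ for some $\mathfrak{m}\in\mathrm{Max}R\cap\mathrm{V}(\mathfrak{p})$, passes to $\mathfrak{p}\in\mathrm{Cosupp}_RD_R(K)$, and then applies Lemma~\ref{lem:0.0} to transfer from Yassemi's cosupport (defined via cocyclic quotients) to the present $\mathrm{coSupp}$. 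You instead give a self-contained argument: the biduality map $\eta_K\colon K\hookrightarrow D_R(D_R(K))$ is injective because $\bigoplus_{\mathfrak{m}}E(R/\mathfrak{m})$ is an injective cogenerator; localizing gives $K_\mathfrak{p}\hookrightarrow D_R(D_R(K))_\mathfrak{p}$, so $K_\mathfrak{p}\neq 0$ forces $D_R(D_R(K))_\mathfrak{p}\neq 0$, and applying $\mathrm{Hom}_{R_\mathfrak{p}}(-,E_{R_\mathfrak{p}}(k(\mathfrak{p})))$ to a nonzero $R_\mathfrak{p}$-module gives a nonzero result. Your route buys transparency and independence from the Yassemi/cocyclic framework (and in particular does not rely on Lemma~\ref{lem:0.0}), at the cost of re-verifying the cogenerator property of $\bigoplus_{\mathfrak{m}}E(R/\mathfrak{m})$, which you do correctly. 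For the converse, both you and the paper really use the same idea: in the finitely generated case, $K_\mathfrak{p}=0$ gives $s\in\mathrm{Ann}_RK\setminus\mathfrak{p}$, and since $\mathrm{Ann}_RK$ kills every iterated dual, $D_R(D_R(K))_\mathfrak{p}=0$; the paper merely leaves this last step implicit in the phrase ``since $K$ is finitely generated,'' while you spell it out.
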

\begin{proof} Since $\mathfrak{p}\in\mathrm{Supp}_RK$, $\mathfrak{p}\in\mathrm{Cosupp}_RD_\mathfrak{m}(K)$ for some $\mathfrak{m}\in\mathrm{Max}R\cap\mathrm{V}(\mathfrak{p})$ by \cite[Lemma 2.8]{Y}, and hence $\mathfrak{p}\in\mathrm{Cosupp}_RD_R(K)$. Consequently, $\mathfrak{p}\in\mathrm{coSupp}_RD_R(K)$ by Lemma \ref{lem:0.0}. Conversely, if $\mathfrak{p}\in\mathrm{coSupp}_RD_R(K)$ then ${^\mathfrak{p}}D_R(K)\neq0$, which implies that $K_\mathfrak{p}\neq0$ since $K$ is finitely generated. Therefore, $\mathfrak{p}\in\mathrm{Supp}_RK$.
\end{proof}

\bigskip
\section{\bf Another version of big cosupport}
This section introduces the set $\mathrm{coSupp}_RM$ of the ``big'' cosupport of an $R$-complex $M$, which is differ from the ``big'' cosupport in \cite{WW}. We show that $\mathrm{coSupp}_RM$ is completely related to $\mathrm{coSupp}_R\mathrm{H}_i(M)$, and give a (partial) duality between $\mathrm{coSupp}_RM$ and $\mathrm{Supp}_RM$.

\begin{df}\label{lem:1.1}{\rm Let $M$ be an $R$-complex.
The ``big'' cosupport of $M$ is defined as
\begin{center}$\mathrm{coSupp}_RM:=\{\mathfrak{p}\in\mathrm{Spec}R\hspace{0.03cm}|\hspace{0.03cm}
{^\mathfrak{p}}M\not\simeq0\}$.\end{center}}
\end{df}

The next theorem establishes the fact that the big cosupport for an $R$-complex is completely related to the big cosupport of the homology modules of complexes, which bring an analogue of the big support (see \cite[6.1.3.2]{CF}).

\begin{thm}\label{lem:1.2}{\it{Let $M$ be an $R$-complex. One has an equality \begin{center}$\mathrm{coSupp}_RM=\bigcup_{i\in\mathbb{Z}}\mathrm{coSupp}_R\mathrm{H}_i(M)$.\end{center}}}
\end{thm}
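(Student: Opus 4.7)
The plan is to prove the equality by showing that the covariant functor $M\mapsto {}^{\mathfrak{p}}M$ on $\mathrm{C}(R)$ is exact, which will immediately yield a natural isomorphism $\mathrm{H}_i({}^{\mathfrak{p}}M)\cong {}^{\mathfrak{p}}\mathrm{H}_i(M)$ for every $i\in\mathbb{Z}$; the two sides of the claimed equality then correspond under this isomorphism.

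First I would observe that, since $R$ is noetherian, the direct sum $\bigoplus_{\mathfrak{m}\in\mathrm{Max}R}E(R/\mathfrak{m})$ is an injective $R$-module (arbitrary direct sums of injectives over a noetherian ring are injective). Consequently the contravariant functor $D_R=\mathrm{Hom}_R(-,\bigoplus_{\mathfrak{m}}E(R/\mathfrak{m}))$ is exact. In the same way, $E(R/\mathfrak{p})$ is injective, so $\mathrm{Hom}_R(-,E(R/\mathfrak{p}))$ is exact. The composition of two exact contravariant functors is an exact covariant functor, hence ${}^{\mathfrak{p}}(-)=\mathrm{Hom}_R(D_R(-),E(R/\mathfrak{p}))$ is exact on $\mathrm{C}(R)$.

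Next, exact functors on $\mathrm{C}(R)$ commute with taking homology. The small subtlety is that each of $D_R$ and $\mathrm{Hom}_R(-,E(R/\mathfrak{p}))$ individually reverses the direction of the differential, and so shifts the indexing of homology by a sign; when one composes the two, the two sign flips cancel. Thus, writing $F=\mathrm{Hom}_R(-,E(R/\mathfrak{p}))\circ D_R$, one checks that $F(M)_i$ is naturally identified with $F(M_i)$ with differential $F(d_i)$, and because $F$ is exact on modules this gives the canonical isomorphism
\begin{center}$\mathrm{H}_i({}^{\mathfrak{p}}M)\cong \mathrm{Hom}_R\bigl(D_R(\mathrm{H}_i(M)),\,E(R/\mathfrak{p})\bigr)={}^{\mathfrak{p}}\mathrm{H}_i(M)$\end{center}
for every $i$.

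Finally, unwinding the definitions: $\mathfrak{p}\in\mathrm{coSupp}_RM$ iff ${}^{\mathfrak{p}}M\not\simeq 0$ iff some $\mathrm{H}_i({}^{\mathfrak{p}}M)\neq 0$ iff ${}^{\mathfrak{p}}\mathrm{H}_i(M)\neq 0$ for some $i\in\mathbb{Z}$ iff $\mathfrak{p}\in\mathrm{coSupp}_R\mathrm{H}_i(M)$ for some $i$. This establishes the displayed equality. The only real obstacle is bookkeeping for the indexing of a chain complex under a contravariant functor; substantively the argument is just exactness of $D_R$ and of $\mathrm{Hom}_R(-,E(R/\mathfrak{p}))$, together with the fact that an $R$-complex is acyclic exactly when all its homology modules vanish.
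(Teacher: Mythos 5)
Your proposal is correct and is in substance the same argument as the paper's. The paper's proof is a four-step chain of equivalences whose pivotal step, justified by the remark that both $E(R/\mathfrak{m})$ and $E(R/\mathfrak{p})$ are injective, is exactly the isomorphism $\mathrm{H}_i({}^{\mathfrak{p}}M)\cong {}^{\mathfrak{p}}\mathrm{H}_i(M)$ that you establish by observing that ${}^{\mathfrak{p}}(-)=\mathrm{Hom}_R(-,E(R/\mathfrak{p}))\circ D_R$ is an exact covariant functor (and noting, correctly, that the two index reversals cancel); you simply spell out the bookkeeping that the paper leaves implicit.
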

\begin{proof} One has the following equivalences\begin{center}$\begin{aligned}\mathfrak{p}\in\mathrm{coSupp}_RM
&\Longleftrightarrow\mathrm{H}_i({^\mathfrak{p}}M)
\neq0\ \textrm{for\ some}\ i\\
&\Longleftrightarrow\mathrm{Hom}_R(D_R(\mathrm{H}_i(M)),E(R/\mathfrak{p}))
\neq0\ \textrm{for\ some}\ i\\
&\Longleftrightarrow{^\mathfrak{p}}\mathrm{H}_{i}(M)\neq0\ \textrm{for\ some}\ i\\
&\Longleftrightarrow\mathfrak{p}\in\bigcup_{i\in\mathbb{Z}}\mathrm{coSupp}_R\mathrm{H}_i(M),\end{aligned}$\end{center}
where the second equivalence is because $E(R/\mathfrak{m})$ and $E(R/\mathfrak{p})$ are injective.
\end{proof}

\begin{cor}\label{lem:1.4}{\it{For an $R$-complex $M$, one has $M\not\simeq0$ if and only if $\mathrm{coSupp}_RM\neq\emptyset$.}}
\end{cor}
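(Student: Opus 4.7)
The plan is to reduce the claim immediately to the module-level statement that an $R$-module $K$ is non-zero if and only if $\mathrm{coSupp}_RK\neq\emptyset$, and then deduce both directions from Theorem \ref{lem:1.2} together with the fact that $M\simeq 0$ exactly when $\mathrm{H}_i(M)=0$ for all $i$.

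The easy half is almost automatic: if $M\simeq 0$ then each $\mathrm{H}_i(M)=0$, so $D_R(\mathrm{H}_i(M))=0$ and hence ${}^\mathfrak{p}\mathrm{H}_i(M)=0$ for every prime $\mathfrak{p}$; Theorem \ref{lem:1.2} then delivers $\mathrm{coSupp}_RM=\emptyset$.

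For the forward direction, assume $M\not\simeq 0$ and pick $i$ with $K:=\mathrm{H}_i(M)\neq 0$. I would first check that $D_R(K)\neq 0$: choosing a non-zero $x\in K$, its annihilator is a proper ideal, hence contained in some $\mathfrak{m}\in\mathrm{Max}R$, and the composite $Rx\twoheadrightarrow R/\mathfrak{m}\hookrightarrow E(R/\mathfrak{m})$ extends by injectivity to a non-zero map $K\to E(R/\mathfrak{m})$, which in turn gives a non-zero element of $D_R(K)$. Running the same bootstrap on the non-zero module $D_R(K)$ produces a maximal ideal $\mathfrak{p}$ together with a non-zero homomorphism $D_R(K)\to E(R/\mathfrak{p})$; equivalently ${}^\mathfrak{p}K\neq 0$, so $\mathfrak{p}\in\mathrm{coSupp}_RK$ and therefore $\mathfrak{p}\in\mathrm{coSupp}_RM$ by Theorem \ref{lem:1.2}. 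Alternatively, one may cite Lemma \ref{lem:0.0}(1) together with the classical fact (see the remark after \cite[Theorem 3.8]{Y}) that every non-zero $R$-module admits a cocyclic homomorphic image, giving $\mathrm{Cosupp}_RK\neq\emptyset$ and hence $\mathrm{coSupp}_RK\neq\emptyset$.

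The only step that requires any thought is the bootstrap producing $\mathfrak{p}$ with ${}^\mathfrak{p}K=\mathrm{Hom}_R(D_R(K),E(R/\mathfrak{p}))\neq 0$, but this is just the injective-envelope trick applied twice, so I expect no substantive obstacle beyond unpacking the definition of ${}^\mathfrak{p}K$ and using the injectivity of $E(R/\mathfrak{m})$.
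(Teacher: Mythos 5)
Your argument is correct and follows the same reduction as the paper's proof: apply Theorem~\ref{lem:1.2} to pass from $M$ to its homology modules, then invoke the module-level fact that a non-zero module has non-empty $\mathrm{coSupp}$. The only cosmetic difference is that the paper cites Richardson \cite[Theorem~2.7]{R} for the module case, whereas you supply (or alternatively cite, via Lemma~\ref{lem:0.0}(1) and Yassemi) the elementary two-step injective-envelope argument; both are fine.
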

\begin{proof} One has that $\mathrm{coSupp}_RM\neq\emptyset$ if and only if $\mathrm{coSupp}_R\mathrm{H}_i(M)\neq\emptyset$ for some $i$ if and only if $\mathrm{H}_i(M)\neq0$ for some $i$ if and only if $M\not\simeq0$, where the first equivalence is by Theorem \ref{lem:1.2}, the second one is by \cite[Theorem 2.7]{R}.
\end{proof}

If $0\not\simeq M\in\mathrm{D}^\mathrm{n}_\mathrm{b}(R)$, then $\mathrm{Supp}_RM=\mathrm{V}(\mathrm{Ann}_RM)$.
The next corollary is dual to this.

\begin{cor}\label{lem:1.00}{\it{For any $0\not\simeq M\in\mathrm{D}^\mathrm{a}_\mathrm{b}(R)$, one has that \begin{center}$\mathrm{coSupp}_RM=\mathrm{V}(\mathrm{Ann}_RM)=\mathrm{Supp}_R(R/\mathrm{Ann}_RM)$.\end{center}}}
\end{cor}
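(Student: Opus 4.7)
The second equality $\mathrm{V}(\mathrm{Ann}_R M)=\mathrm{Supp}_R(R/\mathrm{Ann}_R M)$ is just the standard description of the support of a cyclic module, so the plan is to focus on the first equality and treat each inclusion separately. In both directions the key reduction tool will be Theorem \ref{lem:1.2}, which passes $\mathrm{coSupp}_R(-)$ to the level of homology, together with the identity $\mathrm{V}(\mathrm{Ann}_R M)=\bigcup_i\mathrm{V}(\mathrm{Ann}_R\mathrm{H}_i(M))$ valid for bounded $M$.

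For $\mathrm{coSupp}_R M\subseteq\mathrm{V}(\mathrm{Ann}_R M)$ I would argue directly on $M$. Given $\mathfrak{p}\in\mathrm{coSupp}_R M$ the complex ${}^{\mathfrak{p}}M$ is not acyclic. Any $r\in\mathrm{Ann}_R M$ acts as zero on $M$ in $\mathrm{D}(R)$ and hence, by functoriality of $D_R(-)$, as zero on $D_R(M)$, and therefore on ${}^{\mathfrak{p}}M=\mathrm{Hom}_R(D_R(M),E(R/\mathfrak{p}))$. On the other hand, ${}^{\mathfrak{p}}M$ inherits an $R_{\mathfrak{p}}$-structure from the fact that $E(R/\mathfrak{p})$ is an $R_{\mathfrak{p}}$-module, so every $r\notin\mathfrak{p}$ acts invertibly on it. Since ${}^{\mathfrak{p}}M\not\simeq0$, these two facts together force $\mathrm{Ann}_R M\subseteq\mathfrak{p}$.

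For the reverse inclusion $\mathrm{V}(\mathrm{Ann}_R M)\subseteq\mathrm{coSupp}_R M$ I would move to the level of homology modules using the two identities mentioned above. This reduces the statement to the following claim at the module level: for each artinian $R$-module $K=\mathrm{H}_i(M)$, one has $\mathrm{V}(\mathrm{Ann}_R K)\subseteq\mathrm{coSupp}_R K$. This in turn follows by combining Yassemi's computation $\mathrm{Cosupp}_R K=\mathrm{V}(\mathrm{Ann}_R K)$ for artinian $K$ (from \cite{Y}) with the general inclusion $\mathrm{Cosupp}_R K\subseteq\mathrm{coSupp}_R K$ recorded in Lemma \ref{lem:0.0}(1).

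The step I expect to be the main obstacle is justifying the identity $\mathrm{V}(\mathrm{Ann}_R M)=\bigcup_i\mathrm{V}(\mathrm{Ann}_R\mathrm{H}_i(M))$ for bounded $M$. The nontrivial containment requires that a suitable power of $\bigcap_i\mathrm{Ann}_R\mathrm{H}_i(M)$ annihilates $M$ in $\mathrm{D}(R)$; this is a routine induction on the number of nonzero homologies using the truncation triangles $\sigma_{\geq n}(M)\to M\to\sigma_{\leq n-1}(M)$, but the induction collapses without the boundedness hypothesis, which is why the assumption $M\in\mathrm{D}^{\mathrm{a}}_{\mathrm{b}}(R)$ enters essentially at this point.
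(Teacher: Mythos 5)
Your proposal is correct, and it diverges from the paper's proof in a meaningful way. The paper runs a single chain of equalities through homology: by Theorem \ref{lem:1.2}, $\mathrm{coSupp}_RM=\bigcup_j\mathrm{coSupp}_R\mathrm{H}_j(M)$; by Richardson's Theorem~2.7 each $\mathrm{coSupp}_R\mathrm{H}_j(M)=\mathrm{V}(\mathrm{Ann}_R\mathrm{H}_j(M))$ for the artinian module $\mathrm{H}_j(M)$; then the finite union of $\mathrm{V}$'s collapses to $\mathrm{V}(\bigcap_j\mathrm{Ann}_R\mathrm{H}_j(M))$, which the paper simply equates with $\mathrm{V}(\mathrm{Ann}_RM)$. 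Your version splits the equality into two inclusions and handles the inclusion $\mathrm{coSupp}_RM\subseteq\mathrm{V}(\mathrm{Ann}_RM)$ directly on $M$ by the ``annihilator is killed but units act invertibly on ${}^{\mathfrak{p}}M$'' argument, which is cleaner and in fact needs neither boundedness nor artinian homology; only your reverse inclusion passes to homology, via Yassemi's computation plus Lemma~\ref{lem:0.0}(1) instead of Richardson's statement. The two proofs converge on the same nontrivial core: the identity $\mathrm{V}(\mathrm{Ann}_RM)=\bigcup_j\mathrm{V}(\mathrm{Ann}_R\mathrm{H}_j(M))$ (equivalently, that the radical of the derived annihilator equals the radical of $\bigcap_j\mathrm{Ann}_R\mathrm{H}_j(M)$ for bounded $M$). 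The paper states this without comment in its last displayed equality, whereas you are right to flag it and to observe that the truncation-triangle induction is exactly where boundedness is used; your sketch of that induction is the standard argument and is correct.
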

\begin{proof} Set $i=\mathrm{inf}M$ and $s=\mathrm{sup}M$. We have \begin{center}$\begin{aligned}\mathrm{coSupp}_{R}M
&=\bigcup_{j=i}^s\mathrm{coSupp}_{R}\mathrm{H}_{j}(M)\\
&=\bigcup_{j=i}^s\mathrm{V}(\mathrm{Ann}_R\mathrm{H}_{j}(M))\\
&=\mathrm{V}(\bigcap_{j=i}^s\mathrm{Ann}_R\mathrm{H}_{j}(M))\\
&=\mathrm{V}(\mathrm{Ann}_RM),\end{aligned}$\end{center}
where the second equality is by \cite[Theorem 2.7]{R}.
\end{proof}

The following result play an important role in the rest of the paper.

\begin{thm}\label{lem:1.3}{\it{Let $M$ be an $R$-complex. The following are equivalent:

$\mathrm{(1)}$ $\mathfrak{p}\in\mathrm{coSupp}_RM$;

$\mathrm{(2)}$ $\mathfrak{p}\in\mathrm{Supp}_RD_R(M)$.\\
If in addition $R$ is semi-local, then $(1)$ and $(2)$ are equivalent to

$\mathrm{(3)}$ $\mathfrak{p}\in\mathrm{Supp}_RD_\mathfrak{m}(M)$ for some $\mathfrak{m}\in\mathrm{Max}R\cap\mathrm{V}(\mathfrak{p})$;

$\mathrm{(4)}$ $\mathrm{RHom}_R(R_\mathfrak{p},M^\thicksim)\not\simeq0$.}}
\end{thm}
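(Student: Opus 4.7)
The plan is to prove (1)$\Leftrightarrow$(2) first without any assumption on $R$, and then derive (2)$\Leftrightarrow$(3)$\Leftrightarrow$(4) in the semi-local case by exploiting that $\bigoplus_{\mathfrak{m}\in\mathrm{Max}R} E(R/\mathfrak{m})$ is then a finite direct sum, which forces $D_R$ to split as $\bigoplus_{\mathfrak{m}}D_\mathfrak{m}$ on any complex.

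For (1)$\Leftrightarrow$(2), I would reduce to the module case via Theorem \ref{lem:1.2}. One has $\mathrm{coSupp}_R M=\bigcup_i\mathrm{coSupp}_R\mathrm{H}_i(M)$ by that theorem, and $\mathrm{Supp}_R D_R(M)=\bigcup_i\mathrm{Supp}_R D_R(\mathrm{H}_{-i}(M))$ because $D_R$ is exact (as $\bigoplus_\mathfrak{m}E(R/\mathfrak{m})$ is injective) and the big support of a complex equals the union of the big supports of its homology modules. So it suffices to show the equivalence at the level of modules, $\mathfrak{p}\in\mathrm{coSupp}_R K\Leftrightarrow\mathfrak{p}\in\mathrm{Supp}_R D_R(K)$. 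Via the isomorphism ${^\mathfrak{p}}K\cong\mathrm{Hom}_{R_\mathfrak{p}}(D_R(K)_\mathfrak{p},E_{R_\mathfrak{p}}(k(\mathfrak{p})))$, this comes down to the fact that $E_{R_\mathfrak{p}}(k(\mathfrak{p}))$ is a cogenerator for the category of $R_\mathfrak{p}$-modules: $\mathrm{Hom}_{R_\mathfrak{p}}(N,E_{R_\mathfrak{p}}(k(\mathfrak{p})))=0$ exactly when $N=0$, by the standard Nakayama argument applied to the cyclic submodule generated by any nonzero element.

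For the semi-local case, the finite-sum decomposition $D_R(M)\cong\bigoplus_{\mathfrak{m}\in\mathrm{Max}R}D_\mathfrak{m}(M)$ yields $\mathrm{Supp}_R D_R(M)=\bigcup_\mathfrak{m}\mathrm{Supp}_R D_\mathfrak{m}(M)$. Since $E(R/\mathfrak{m})$ carries a natural $R_\mathfrak{m}$-action, $D_\mathfrak{m}(M)$ is a complex of $R_\mathfrak{m}$-modules, so $\mathrm{Supp}_R D_\mathfrak{m}(M)\subseteq\mathrm{U}(\mathfrak{m})$: for $\mathfrak{p}\not\subseteq\mathfrak{m}$ one has $(R_\mathfrak{m})_\mathfrak{p}=0$ (any element of $\mathfrak{p}\setminus\mathfrak{m}$ is already a unit in $R_\mathfrak{m}$), whence $D_\mathfrak{m}(M)_\mathfrak{p}\simeq0$. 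This gives (2)$\Leftrightarrow$(3). For (3)$\Leftrightarrow$(4), combining $M^\thicksim=\bigoplus_\mathfrak{m}D_\mathfrak{m}(D_\mathfrak{m}(M))$ with tensor-hom adjunction and the flatness of $R_\mathfrak{p}$ produces
\[\mathrm{RHom}_R(R_\mathfrak{p},M^\thicksim)\simeq\bigoplus_{\mathfrak{m}\in\mathrm{Max}R}D_\mathfrak{m}\bigl((D_\mathfrak{m}(M))_\mathfrak{p}\bigr).\]
Applying the cogenerator property of $E(R/\mathfrak{m})$ over $R_\mathfrak{m}$ to the $R_\mathfrak{m}$-complex $(D_\mathfrak{m}(M))_\mathfrak{p}$ then shows that the $\mathfrak{m}$-summand is acyclic iff $(D_\mathfrak{m}(M))_\mathfrak{p}\simeq0$, which recovers (3).

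The main obstacle is extending the cogenerator argument from modules to complexes: one must verify that $\mathrm{Hom}_R(-,E(R/\mathfrak{p}))$---exact on modules and hence preserving quasi-isomorphisms---detects when a complex of $R_\mathfrak{p}$-modules is acyclic, which reduces to the module statement by passage to homology. A secondary technical point is justifying the split $D_R\cong\bigoplus_\mathfrak{m}D_\mathfrak{m}$ on complexes and the complementary claim that $D_\mathfrak{m}$ takes values in $R_\mathfrak{m}$-complexes; both follow formally from $\mathrm{Hom}_R(-,A\oplus B)=\mathrm{Hom}_R(-,A)\oplus\mathrm{Hom}_R(-,B)$ for finite direct sums and the $R_\mathfrak{m}$-module structure on $E(R/\mathfrak{m})$.
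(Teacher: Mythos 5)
Your proof is correct and, for the first half, runs along the same lines as the paper: both reduce $(1)\Leftrightarrow(2)$ to the module case via Theorem \ref{lem:1.2} and exactness of $D_R$, then invoke the cogenerator property of $E_{R_\mathfrak{p}}(k(\mathfrak{p}))$ — which is exactly the content of \cite[Theorem 2.7]{R} that the paper cites. For the semi-local implications your route is genuinely different and arguably cleaner. The paper again pushes everything to homology modules and leans on three module-level facts from Yassemi's paper (\cite[Lemma 2.5, Theorems 2.7, 2.15]{Y}) plus Lemma \ref{lem:0.0}, so its argument for $(2)\Leftrightarrow(3)$ and $(1)\Leftrightarrow(4)$ is a chain of citations rather than a self-contained computation. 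You instead exploit the structural consequence of semi-locality: $\mathrm{Max}\,R$ finite forces $D_R=\bigoplus_\mathfrak{m}D_\mathfrak{m}$ and $M^\thicksim=\bigoplus_\mathfrak{m}D_\mathfrak{m}(D_\mathfrak{m}(M))$, and then two elementary observations — that $D_\mathfrak{m}(M)$ is an $R_\mathfrak{m}$-complex so $\mathrm{Supp}_R D_\mathfrak{m}(M)\subseteq\mathrm{U}(\mathfrak{m})$, and that $E(R/\mathfrak{m})$ is a cogenerator over $R_\mathfrak{m}$ — carry the whole argument. The constraint $\mathfrak{m}\in\mathrm{V}(\mathfrak{p})$ in $(3)$ comes out automatically from $\mathrm{Supp}_R D_\mathfrak{m}(M)\subseteq\mathrm{U}(\mathfrak{m})$, which is a nice touch the paper glosses over. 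What you gain is a more transparent, self-contained proof that makes clear exactly where semi-locality is used (finiteness of the direct sum); what the paper's route gains is that it exercises the $\mathrm{coSupp}$-$\mathrm{Supp}$ dictionary for modules that it has already built up, so the proof is shorter on the page even if less illuminating. Your isomorphism $\mathrm{RHom}_R(R_\mathfrak{p},M^\thicksim)\simeq\bigoplus_\mathfrak{m}D_\mathfrak{m}\bigl((D_\mathfrak{m}(M))_\mathfrak{p}\bigr)$ rests on the same adjunction-plus-flatness identification that the paper also uses without elaboration, so no gap relative to the source; if you were to write this out in full you would want to note that $\mathrm{Hom}_R(R_\mathfrak{p},\mathrm{Hom}_R(-,E(R/\mathfrak{m})))\cong\mathrm{Hom}_R((-)_\mathfrak{p},E(R/\mathfrak{m}))$ is an exact functor and hence already computes the derived Hom here.
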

\begin{proof} (1) $\Leftrightarrow$ (2) One has the following equivalences\begin{center}$\begin{aligned}\mathfrak{p}\in\mathrm{coSupp}_RM
&\Longleftrightarrow\mathfrak{p}\in\mathrm{coSupp}_R\mathrm{H}_i(M)\ \textrm{for\ some}\ i\\
&\Longleftrightarrow\mathfrak{p}\in\mathrm{Supp}_RD_R(\mathrm{H}_i(M))\ \textrm{for\ some}\ i\\
&\Longleftrightarrow\mathfrak{p}\in\mathrm{Supp}_R\mathrm{H}_{-i}(D_R(M))\ \textrm{for\ some}\ i\\
&\Longleftrightarrow\mathfrak{p}\in\mathrm{Supp}_RD_R(M),\end{aligned}$\end{center}
where the first one is by Theorem \ref{lem:1.2}, the second one is by \cite[Theorem 2.7]{R}, the third one is since $\bigoplus_{\mathfrak{m}\in\mathrm{Max}R}E(R/\mathfrak{m})$ is injective.

Next assume that $R$ is semi-local.

(2) $\Leftrightarrow$ (3) One has the following equivalences\begin{center}$\begin{aligned}\mathfrak{p}\in\mathrm{Supp}_RD_\mathfrak{m}(M)
&\Longleftrightarrow\mathfrak{p}\in\mathrm{Supp}_RD_\mathfrak{m}(\mathrm{H}_i(M))\ \textrm{for\ some}\ i\\
&\Longleftrightarrow\mathfrak{p}\in\mathrm{coSupp}_R\mathrm{H}_{i}(M)\ \textrm{for\ some}\ i\\
&\Longleftrightarrow\mathfrak{p}\in\mathrm{Supp}_RD_R(\mathrm{H}_{i}(M))\ \textrm{for\ some}\ i\\
&\Longleftrightarrow\mathfrak{p}\in\mathrm{Supp}_RD_R(M),\end{aligned}$\end{center}
where the second equivalence is by \cite[Lemma 2.5]{Y} and Lemma \ref{lem:0.0}, the third one is by \cite[Theorem 2.7]{R}.

(1) $\Leftrightarrow$ (4) One has the following equivalences\begin{center}$\begin{aligned}\mathrm{RHom}_R(R_\mathfrak{p},M^\thicksim)\not\simeq0
&\Longleftrightarrow\prod_{\mathfrak{m}\in\mathrm{Max}R}\mathrm{H}_i(D_\mathfrak{m}(D_\mathfrak{m}(M)_\mathfrak{p}))
\neq0\ \textrm{for\ some}\ i\\
&\Longleftrightarrow\prod_{\mathfrak{m}\in\mathrm{Max}R}D_\mathfrak{m}(D_\mathfrak{m}(\mathrm{H}_{i}(M))_\mathfrak{p})
\neq0\ \textrm{for\ some}\ i\\
&\Longleftrightarrow\prod_{\mathfrak{m}\in\mathrm{Max}R}\mathrm{Hom}_R(R_\mathfrak{p},
D_\mathfrak{m}(D_\mathfrak{m}(\mathrm{H}_{i}(M))))
\neq0\ \textrm{for\ some}\ i\\
&\Longleftrightarrow\mathrm{Hom}_R(R_\mathfrak{p},\mathrm{H}_{i}(M)^\thicksim)\neq0\ \textrm{for\ some}\ i,\end{aligned}$\end{center}
where the second and the third equivalences are because $E(R/\mathfrak{m})$ is injective and $R_\mathfrak{p}$ is flat. Hence Theorem \ref{lem:1.2} and \cite[Theorem 2.15]{Y} imply the desired equivalence.
\end{proof}

Let $\mathcal{U}$ be a subset of $\mathrm{Spec}R$. The specialization closure of $\mathcal{U}$ is the set
\begin{center}$\mathrm{cl}\mathcal{U}=\{\mathfrak{p}\in\textrm{Spec}R\hspace{0.03cm}|\hspace{0.03cm}\textrm{there\ is}\ \mathfrak{q}\in\mathcal{U}\ \textrm{with}\ \mathfrak{q}\subseteq\mathfrak{p}\}$.\end{center}The subset $\mathcal{U}$ is
 specialization closed if $\mathrm{cl}\mathcal{U}=\mathcal{U}$.

\begin{rem}\label{lem:4.3}{\rm $\mathrm{(1)}$ For any $R$-complex $M$, one has that $\mathrm{coSupp}_{R}M=\mathrm{coSupp}_{R}\Sigma M$.

$\mathrm{(2)}$ For an exact triangle $L\rightarrow M\rightarrow N\rightsquigarrow$ in $\mathrm{D}(R)$, we have
\begin{center}$\mathrm{coSupp}_{R}M\subseteq\mathrm{coSupp}_{R}L\cup\mathrm{coSupp}_{R}N$.\end{center}

$\mathrm{(3)}$ For any $R$-complex $M$, the set $\mathrm{coSupp}_RM$ is specialization closed.

(4) $\mathrm{H}({^\mathfrak{p}}M)\cong{^\mathfrak{p}}\mathrm{H}(M)$ for any $\mathfrak{p}\in\mathrm{Spec}R$.

(5) Let $M\in\mathrm{D}^\mathrm{n}_\mathrm{b}(R)$ and $N\in\mathrm{D}(R)$. One has two isomorphisms \begin{center}${^\mathfrak{p}}(M\otimes^\mathrm{L}_RN)\simeq M_\mathfrak{p}\otimes^\mathrm{L}_{R_\mathfrak{p}}{^\mathfrak{p}}N$ and ${^\mathfrak{p}}\mathrm{RHom}_R(M,N)\simeq\mathrm{RHom}_{R_\mathfrak{p}}(M_\mathfrak{p},{^\mathfrak{p}}N)$,\end{center} in $\mathrm{D}(R)$, which implies that \begin{center}$\mathrm{coSupp}_R(M\otimes^\mathrm{L}_RN)\subseteq\mathrm{Supp}_RM\cap\mathrm{coSupp}_RN$,\end{center} \begin{center}$\mathrm{coSupp}_R\mathrm{RHom}_R(M,N)\subseteq\mathrm{Supp}_RM\cap\mathrm{coSupp}_RN$.\end{center}

(6) By Lemma \ref{lem:0.0}, one has that $\mathrm{RHom}_R(R_\mathfrak{p},M^\thicksim)\not\simeq0$ implies that ${^\mathfrak{p}}M\not\simeq0$.

(7) The notion of big cosupport for an $R$-complex $M$ is not the same as the one in \cite{WW}. For example, let $M=R=k[x]$ for any field $k$. Then $\mathrm{Co\textrm{-}supp}_RM=\mathrm{Spec}R$. But $\mathrm{coSupp}_RM=\mathrm{Max}R\neq\mathrm{Spec}R$ by Theorem \ref{lem:1.3}.

(8) Let $M$ be an $R$-complex and $\mathfrak{p}\in\mathrm{Spec}R$. If each $\mathrm{H}_i(M)$ is a Matlis reflexive $R$-module (i.e. $\mathrm{H}_i(M)\cong D_R(D_R(\mathrm{H}_i(M)))$), then $M\simeq D_R(D_R(M))$, and so ${^\mathfrak{p}}D_R(M)\simeq\mathrm{Hom}_{R_\mathfrak{p}}(D_R(D_R(M))_\mathfrak{p},E(k(\mathfrak{p})))\simeq D_{R_\mathfrak{p}}({M_\mathfrak{p}})$. Consequently, \begin{center}$\mathfrak{p}\in\mathrm{Supp}_RM\Longleftrightarrow \mathfrak{p}\in\mathrm{coSupp}_RD_R(M)$.\end{center}}
\end{rem}

In general, we have the following result.

\begin{prop}\label{lem:1.7}{\it{Let $M$ be an $R$-complex.

$\mathrm{(1)}$ If $\mathfrak{p}\in\mathrm{Supp}_RM$, then $\mathfrak{p}\in\mathrm{coSupp}_RD_R(M)$.

$\mathrm{(2)}$ If $M\in\mathrm{D}^\mathrm{n}(R)$, then $\mathfrak{p}\in\mathrm{Supp}_RM$ if and only if $\mathfrak{p}\in\mathrm{coSupp}_RD_R(M)$.}}
\end{prop}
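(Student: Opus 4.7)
The plan is to reduce both statements to the module-level result (Lemma \ref{lem:0.1}) by commuting the duality functor $D_R(-)$ with homology and applying Theorem \ref{lem:1.2}.

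First I would observe that since $E:=\bigoplus_{\mathfrak{m}\in\mathrm{Max}R}E(R/\mathfrak{m})$ is injective, the functor $D_R(-)=\mathrm{Hom}_R(-,E)$ is exact, so for every $i$ we have a natural identification $\mathrm{H}_{-i}(D_R(M))\cong D_R(\mathrm{H}_i(M))$. Combined with Theorem \ref{lem:1.2}, this gives
\begin{center}$\mathrm{coSupp}_RD_R(M)=\bigcup_{i\in\mathbb{Z}}\mathrm{coSupp}_RD_R(\mathrm{H}_i(M))$.\end{center}
At the same time, $\mathrm{Supp}_RM=\bigcup_{i\in\mathbb{Z}}\mathrm{Supp}_R\mathrm{H}_i(M)$ by \cite[6.1.3.2]{CF}. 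Hence both sides of the statements decompose into a union over $i$ of their respective module-level analogues.

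For (1), pick $\mathfrak{p}\in\mathrm{Supp}_RM$; then $\mathfrak{p}\in\mathrm{Supp}_R\mathrm{H}_i(M)$ for some $i$. Lemma \ref{lem:0.1} applied to the module $\mathrm{H}_i(M)$ yields $\mathfrak{p}\in\mathrm{coSupp}_RD_R(\mathrm{H}_i(M))$, and by the decomposition above this places $\mathfrak{p}$ in $\mathrm{coSupp}_RD_R(M)$.

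For (2), the hypothesis $M\in\mathrm{D}^\mathrm{n}(R)$ ensures each $\mathrm{H}_i(M)$ is finitely generated, which is exactly the extra assumption needed to invoke the converse direction of Lemma \ref{lem:0.1}. Thus if $\mathfrak{p}\in\mathrm{coSupp}_RD_R(M)$, then $\mathfrak{p}\in\mathrm{coSupp}_RD_R(\mathrm{H}_i(M))$ for some $i$, whence $\mathfrak{p}\in\mathrm{Supp}_R\mathrm{H}_i(M)\subseteq\mathrm{Supp}_RM$. Together with (1), this completes the equivalence.

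The only subtle step is the identification $\mathrm{H}_{-i}(D_R(M))\cong D_R(\mathrm{H}_i(M))$, which requires nothing more than exactness of $\mathrm{Hom}_R(-,E)$ for the injective module $E$; the author has already used this same observation in the proof of Theorem \ref{lem:1.3}, so no new obstacle arises. After that, everything is a direct translation of Lemma \ref{lem:0.1} from modules to complexes through Theorem \ref{lem:1.2}.
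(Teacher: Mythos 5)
Your proof is correct and follows essentially the same route as the paper: reduce to homology via Theorem \ref{lem:1.2} and the exactness of $D_R(-)$ (giving $\mathrm{H}_{-i}(D_R(M))\cong D_R(\mathrm{H}_i(M))$), then invoke the module-level Lemma \ref{lem:0.1} in each direction. The only difference is that you spell out the commutation of $D_R$ with homology more explicitly than the paper, which leaves that step implicit.
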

\begin{proof} (1) Since $\mathfrak{p}\in\mathrm{Supp}_RM$, $\mathfrak{p}\in\mathrm{Supp}_R\mathrm{H}_i(M)$ for some $i$, and so $\mathfrak{p}\in\mathrm{coSupp}_R\mathrm{H}_{-i}(D_R(M))$ by Lemma \ref{lem:0.1}. Therefore, $\mathfrak{p}\in\mathrm{coSupp}_RD_R(M)$ by Theorem \ref{lem:1.2}.

(2) ``Only if'' part by (1). ``If'' part. Since $\mathfrak{p}\in\mathrm{coSupp}_RD_R(M)$, $\mathfrak{p}\in\mathrm{coSupp}_R\mathrm{H}_i(D_R(M))$ for some $i$ by Theorem \ref{lem:1.2}, i.e., $\mathfrak{p}\in\mathrm{coSupp}_RD_R(\mathrm{H}_{-i}(M))$. Hence Lemma \ref{lem:0.1}
implies that $\mathfrak{p}\in\mathrm{Supp}_R\mathrm{H}_{-i}(M)$. Consequently, $\mathfrak{p}\in\mathrm{Supp}_RM$.
\end{proof}

The following example shows that the reverse of (1) in the above proposition does not hold in general.

\begin{exa}\label{lem:4.4}{\rm (\cite{Y}) Let $(R,\mathfrak{m},k)$ be a local domain with $\mathrm{dim}R>0$. Consider the complex $M=0\rightarrow\bigoplus_{n>0}R/\mathfrak{m}^n\rightarrow0$. Then $(0)\in\mathrm{Supp}_RD_R(D_R(M))$ and so $(0)\in\mathrm{coSupp}_RD_R(M)$ by Theorem \ref{lem:1.3}. However, $(0)\not\in\mathrm{Supp}_RM$.}
\end{exa}

\bigskip
\section{\bf Another version of small cosupport}
This section introduces the set $\mathrm{cosupp}_RM$ of ``small'' cosupport of an $R$-complex $M$, and provide a duality between the ``small'' cosupport and support as Section 2.

\begin{df}\label{lem:2.1}{\rm Let $M$ be an $R$-complex.
 The ``small'' cosupport of $M$ is defined as
\begin{center}$\mathrm{cosupp}_RM:=\{\mathfrak{p}\in\mathrm{Spec}R\hspace{0.03cm}|\hspace{0.03cm}
\mathrm{RHom}_R(R/\mathfrak{p},{^\mathfrak{p}}M)\not\simeq0\}$.\end{center}}
\end{df}

Next we bring an analogue of Theorem \ref{lem:1.3}.

\begin{thm}\label{lem:2.3}{\it{Let $M$ be an $R$-complex. The following are equivalent:

$\mathrm{(1)}$ $\mathfrak{p}\in\mathrm{cosupp}_RM$;

$\mathrm{(2)}$ $\mathrm{RHom}_R(D_R(M),k(\mathfrak{p}))\not\simeq0$;

$\mathrm{(3)}$ $\mathfrak{p}\in\mathrm{supp}_RD_R(M)$;

$\mathrm{(4)}$ $k(\mathfrak{p})\otimes^\mathrm{L}_{R_\mathfrak{p}}{^\mathfrak{p}}M\not\simeq0$;

$\mathrm{(5)}$ $\mathfrak{p}R_\mathfrak{p}\in\mathrm{cosupp}_{R_\mathfrak{p}}{^\mathfrak{p}}M$.\\
If in addition $R$ is semi-local, then $(1)$--$(5)$ are equivalent to

$\mathrm{(6)}$ $\mathrm{RHom}_R(k(\mathfrak{p}),M^\thicksim)\not\simeq0$;

$\mathrm{(7)}$ $\mathrm{Hom}_R(\coprod_{\mathfrak{m}\in\mathrm{Max}R}D_\mathfrak{m}(M),k(\mathfrak{p}))\not\simeq0$;

$\mathrm{(8)}$ $\mathfrak{p}\in\mathrm{supp}_RD_\mathfrak{m}(M)$ for some $\mathfrak{m}\in\mathrm{Max}R\cap\mathrm{V}(\mathfrak{p})$;

$\mathrm{(9)}$ $k(\mathfrak{p})\otimes^\mathrm{L}_{R_\mathfrak{p}}\mathrm{RHom}_R(R_\mathfrak{p},M^\thicksim)\not\simeq0$.}}
\end{thm}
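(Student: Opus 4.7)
The plan is to anchor the proof around the equivalence (1) $\Leftrightarrow$ (3), which I would obtain by unfolding the definition ${^\mathfrak{p}}M = \mathrm{Hom}_R(D_R(M), E(R/\mathfrak{p}))$ and applying tensor-hom adjunction:
\begin{align*}
\mathrm{RHom}_R(R/\mathfrak{p}, {^\mathfrak{p}}M)
&\simeq \mathrm{RHom}_R\bigl(R/\mathfrak{p}, \mathrm{Hom}_R(D_R(M), E(R/\mathfrak{p}))\bigr) \\
&\simeq \mathrm{Hom}_R\bigl(R/\mathfrak{p} \otimes^\mathrm{L}_R D_R(M), E(R/\mathfrak{p})\bigr) \\
&\simeq \mathrm{Hom}_{R_\mathfrak{p}}\bigl(k(\mathfrak{p}) \otimes^\mathrm{L}_{R_\mathfrak{p}} D_R(M)_\mathfrak{p}, E_{R_\mathfrak{p}}(k(\mathfrak{p}))\bigr),
\end{align*}
where the key step uses that $E(R/\mathfrak{p})$ is injective and is naturally an $R_\mathfrak{p}$-module. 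Matlis faithfulness over the local ring $R_\mathfrak{p}$ then forces this last term to vanish if and only if $k(\mathfrak{p}) \otimes^\mathrm{L}_{R_\mathfrak{p}} D_R(M)_\mathfrak{p} \simeq 0$, giving (1) $\Leftrightarrow$ (3). The equivalence (2) $\Leftrightarrow$ (3) reduces, after localization at $\mathfrak{p}$, to the standard identity $\mathrm{Ext}^n_{R_\mathfrak{p}}(X, k(\mathfrak{p})) \cong \mathrm{Hom}_{k(\mathfrak{p})}(\mathrm{Tor}^{R_\mathfrak{p}}_n(X, k(\mathfrak{p})), k(\mathfrak{p}))$ applied to $X = D_R(M)_\mathfrak{p}$.

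For (3) $\Leftrightarrow$ (4), I would use that Matlis duality over the local ring $R_\mathfrak{p}$ preserves the presence of $\mathfrak{p}R_\mathfrak{p}$ in the small support. Combining the adjunction
\[ \mathrm{RHom}_{R_\mathfrak{p}}\bigl(k(\mathfrak{p}), \mathrm{Hom}_{R_\mathfrak{p}}(Y, E_{R_\mathfrak{p}}(k(\mathfrak{p})))\bigr) \simeq \mathrm{Hom}_{R_\mathfrak{p}}\bigl(k(\mathfrak{p}) \otimes^\mathrm{L}_{R_\mathfrak{p}} Y, E_{R_\mathfrak{p}}(k(\mathfrak{p}))\bigr) \]
with the local-ring equivalence $\mathrm{RHom}(k(\mathfrak{p}), -) \simeq 0 \Leftrightarrow k(\mathfrak{p}) \otimes^\mathrm{L}_{R_\mathfrak{p}} - \simeq 0$ yields $\mathfrak{p}R_\mathfrak{p} \in \mathrm{supp}_{R_\mathfrak{p}} Y \Leftrightarrow \mathfrak{p}R_\mathfrak{p} \in \mathrm{supp}_{R_\mathfrak{p}} D_{R_\mathfrak{p}}(Y)$. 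Taking $Y = D_R(M)_\mathfrak{p}$, whose $R_\mathfrak{p}$-Matlis dual is precisely ${^\mathfrak{p}}M$, gives (3) $\Leftrightarrow$ (4). Then (4) $\Leftrightarrow$ (5) follows by reinterpreting (5) as condition (1) applied internally to $R_\mathfrak{p}$ and the complex ${^\mathfrak{p}}M$ at the maximal ideal $\mathfrak{p}R_\mathfrak{p}$, and matching this via the internal (1) $\Leftrightarrow$ (4) chain, using that Matlis duality preserves small support at $\mathfrak{p}R_\mathfrak{p}$.

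For the semi-local additions (6)-(9), I would exploit the finite decomposition $D_R(M) \simeq \bigoplus_{\mathfrak{m} \in \mathrm{Max}R} D_\mathfrak{m}(M)$. The equivalence (3) $\Leftrightarrow$ (8) is immediate from $k(\mathfrak{p}) \otimes^\mathrm{L}_R D_R(M) \simeq \bigoplus_\mathfrak{m} k(\mathfrak{p}) \otimes^\mathrm{L}_R D_\mathfrak{m}(M)$ together with $\mathrm{supp}_R D_\mathfrak{m}(M) \subseteq \mathrm{U}(\mathfrak{m})$, which forces a contributing $\mathfrak{m}$ to satisfy $\mathfrak{m} \supseteq \mathfrak{p}$. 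The step (2) $\Leftrightarrow$ (7) follows from the same decomposition identifying $\coprod_\mathfrak{m} D_\mathfrak{m}(M)$ with $D_R(M)$ in the semi-local case. Finally (6) $\Leftrightarrow$ (9) comes from the adjunction $\mathrm{RHom}_R(k(\mathfrak{p}), M^\thicksim) \simeq \mathrm{RHom}_{R_\mathfrak{p}}\bigl(k(\mathfrak{p}), \mathrm{RHom}_R(R_\mathfrak{p}, M^\thicksim)\bigr)$ combined with the local-ring equivalence of $\mathrm{RHom}(k(\mathfrak{p}), -)$ and $k(\mathfrak{p}) \otimes^\mathrm{L}_{R_\mathfrak{p}} -$; the bridge back to the main chain (1)-(5) is supplied by Theorem \ref{lem:1.3}(4) together with Theorem \ref{lem:1.2}. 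The main obstacle is working at the derived level with potentially unbounded complexes ${^\mathfrak{p}}M$ and $D_R(M)$; this is defused because $E(R/\mathfrak{p})$ is injective, so $\mathrm{Hom}_R(-, E(R/\mathfrak{p}))$ preserves quasi-isomorphisms, and Theorem \ref{lem:1.2} lets one reduce the various small-support vanishing criteria to a degreewise check on homology.
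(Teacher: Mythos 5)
Your argument follows essentially the same path as the paper's, built on the Hom--tensor adjunction, faithfulness of $E_{R_\mathfrak{p}}(k(\mathfrak{p}))$, and the local equivalence of $\mathrm{RHom}_R(k(\mathfrak{p}),-)$ and $k(\mathfrak{p})\otimes^\mathrm{L}_R-$ (the paper's citation to [WW, Fact~3.5]). The paper simply packages $(1)$--$(3)$ and most of $(6)$--$(8)$ by feeding the auxiliary complex $N=\mathrm{RHom}_R(R/\mathfrak{p},M)$ into Theorem~\ref{lem:1.3}, using the isomorphisms ${^\mathfrak{p}}N\simeq\mathrm{RHom}_R(R/\mathfrak{p},{^\mathfrak{p}}M)$ and $D_R(N)_\mathfrak{p}\simeq k(\mathfrak{p})\otimes^\mathrm{L}_RD_R(M)$, whereas you unfold the adjunctions directly; the substance is the same, and your $(4)\Leftrightarrow(5)$ via double Matlis duality matches the paper's $(1)\Leftrightarrow(5)$.

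Two places need tightening. First, for $(2)\Leftrightarrow(7)$ you cannot simply replace $D_R(M)$ by $\coprod_\mathfrak{m}D_\mathfrak{m}(M)$ and declare the two conditions equal: $(2)$ is a derived Hom while $(7)$ uses the plain Hom complex into $k(\mathfrak{p})$, which is not injective, so the identification needs the chosen representative of $\coprod_\mathfrak{m}D_\mathfrak{m}(M)$ to be K-flat, a point you do not address. The paper sidesteps this by rewriting
\[
\mathrm{Hom}_R\Bigl(\coprod_{\mathfrak{m}}D_\mathfrak{m}(M),\,k(\mathfrak{p})\Bigr)\simeq
\mathrm{Hom}_R\Bigl(\coprod_{\mathfrak{m}}D_\mathfrak{m}(\mathrm{RHom}_R(R/\mathfrak{p},M)),\,E(R/\mathfrak{p})\Bigr),
\]
so that the target becomes injective and the Hom automatically computes the derived Hom, identifying $(7)$ with ${^\mathfrak{p}}(\mathrm{RHom}_R(R/\mathfrak{p},M))$ and hence with $(1)$ via Remark~\ref{lem:2.6}(1). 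You should likewise route $(7)$ through $E(R/\mathfrak{p})$. Second, your ``bridge back'' from $(6)$--$(9)$ to $(1)$--$(5)$ is only gestured at; make it explicit: apply Theorem~\ref{lem:1.3}(4) to $N=\mathrm{RHom}_R(R/\mathfrak{p},M)$, use $N^{\thicksim}\simeq\mathrm{RHom}_R(R/\mathfrak{p},M^{\thicksim})$ and the adjunction $\mathrm{RHom}_R(R_\mathfrak{p},\mathrm{RHom}_R(R/\mathfrak{p},-))\simeq\mathrm{RHom}_R(k(\mathfrak{p}),-)$ to identify $\mathrm{RHom}_R(R_\mathfrak{p},N^{\thicksim})$ with the object in $(6)$, which gives $(1)\Leftrightarrow(6)$ directly. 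With these two points supplied, the proof is complete and coincides with the paper's.
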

\begin{proof} One has the following isomorphisms in $\mathrm{D}(R)$:
\begin{center}${^\mathfrak{p}}\mathrm{RHom}_R(R/\mathfrak{p},M)\simeq\mathrm{RHom}_R(D_R(M),k(\mathfrak{p}))
\simeq\mathrm{RHom}_R(R/\mathfrak{p},{^\mathfrak{p}}M)$,\end{center}
\begin{center}$D_R(\mathrm{RHom}_R(R/\mathfrak{p},M))_\mathfrak{p}
\simeq (R/\mathfrak{p}\otimes^\mathrm{L}_R
D_R(M))_\mathfrak{p}\simeq k(\mathfrak{p})\otimes^\mathrm{L}_RD_R(M)$.\end{center}
Hence Theorem \ref{lem:1.3} implies the equivalences of (1)--(3).

(1) $\Leftrightarrow$ (4) This follows from \cite[Fact 3.5]{WW} and the isomorphism $\mathrm{RHom}_R(R/\mathfrak{p},{^\mathfrak{p}}M)\cong
\mathrm{RHom}_{R_\mathfrak{p}}(k(\mathfrak{p}),{^\mathfrak{p}}M)$ in $\mathrm{D}(R)$.

(1) $\Leftrightarrow$ (5) Since $\mathrm{RHom}_{R_\mathfrak{p}}(R_\mathfrak{p}/\mathfrak{p}R_\mathfrak{p},
{^{\mathfrak{p}R_\mathfrak{p}}}({^\mathfrak{p}}M))\simeq D_{R_\mathfrak{p}}(D_{R_\mathfrak{p}}(\mathrm{RHom}_R(R/\mathfrak{p},{^\mathfrak{p}}M)))$, it follows that $\mathrm{RHom}_{R_\mathfrak{p}}(R_\mathfrak{p}/\mathfrak{p}R_\mathfrak{p},
{^{\mathfrak{p}R_\mathfrak{p}}}({^\mathfrak{p}}M))\not\simeq0$ if and only if $\mathrm{RHom}_{R}(R/\mathfrak{p},{^\mathfrak{p}}M)\not\simeq0$, as desired.

One has the following isomorphisms in $\mathrm{D}(R)$:
\begin{center}$\mathrm{RHom}_R(k(\mathfrak{p}),M^\thicksim)\simeq\mathrm{RHom}_R(R_\mathfrak{p},\mathrm{RHom}_R(R/\mathfrak{p},M)^\thicksim)$,\end{center} \begin{center}$\mathrm{Hom}_R(\coprod_{\mathfrak{m}\in\mathrm{Max}R}D_\mathfrak{m}(M),k(\mathfrak{p}))
\simeq\mathrm{Hom}_R(\coprod_{\mathfrak{m}\in\mathrm{Max}R}
D_\mathfrak{m}(\mathrm{RHom}_R(R/\mathfrak{p},M)),E(R/\mathfrak{p}))$,\end{center}
\begin{center}$D_\mathfrak{m}(\mathrm{RHom}_R(R/\mathfrak{p},M))_\mathfrak{p}
\simeq (R/\mathfrak{p}\otimes^\mathrm{L}_R
D_\mathfrak{m}(M))_\mathfrak{p}\simeq k(\mathfrak{p})\otimes^\mathrm{L}_RD_\mathfrak{m}(M)$,\end{center}
Hence Theorem \ref{lem:1.3} implies the equivalences of (1) $\Leftrightarrow$ (6) $\Leftrightarrow$ (7) $\Leftrightarrow$ (8).

(6) $\Leftrightarrow$ (9) This follows from \cite[Fact 3.5]{WW} and the isomorphism $\mathrm{RHom}_R(k(\mathfrak{p}),M^\thicksim)\simeq\mathrm{RHom}_{R_\mathfrak{p}}(k(\mathfrak{p}),
\mathrm{RHom}_R(R_\mathfrak{p},M^\thicksim))$ in $\mathrm{D}(R)$.
\end{proof}

\begin{cor}\label{lem:2.66}{\it{Let $M$ be an $R$-complex. Then $M\not\simeq0$ if and only if $\mathrm{cosupp}_RM\neq\emptyset$.}}
\end{cor}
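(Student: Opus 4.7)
The plan is to reduce everything to the corresponding fact for the small support via Theorem \ref{lem:2.3}, which gives the equality
\begin{center}
$\mathrm{cosupp}_RM=\mathrm{supp}_RD_R(M)$
\end{center}
(namely, the equivalence $(1)\Leftrightarrow(3)$ in that theorem). Combining this with the standard fact that $\mathrm{supp}_RN=\emptyset$ if and only if $N\simeq 0$ in $\mathrm{D}(R)$ (a consequence of \cite[6.1.7]{CF}), it suffices to prove that $D_R(M)\simeq 0$ if and only if $M\simeq 0$.

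For the easier direction ($\Leftarrow$) I would argue directly: if $\mathfrak{p}\in\mathrm{cosupp}_RM$ then $\mathrm{RHom}_R(R/\mathfrak{p},{^\mathfrak{p}}M)\not\simeq 0$, hence a fortiori ${^\mathfrak{p}}M\not\simeq 0$, so $\mathfrak{p}\in\mathrm{coSupp}_RM$, and Corollary \ref{lem:1.4} yields $M\not\simeq 0$. This avoids any use of Theorem \ref{lem:2.3}.

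For the forward direction ($\Rightarrow$), assume $M\not\simeq 0$, so that $\mathrm{H}_i(M)\neq 0$ for some $i$. The key observation is that $\bigoplus_{\mathfrak{m}\in\mathrm{Max}R}E(R/\mathfrak{m})$ is an injective cogenerator of $\mathrm{Mod}\,R$: for any nonzero element $x\in\mathrm{H}_i(M)$, its annihilator is contained in some maximal ideal $\mathfrak{m}$, so the composite $Rx\onto R/\mathfrak{m}\hookrightarrow E(R/\mathfrak{m})$ extends (by injectivity) to a nonzero morphism $\mathrm{H}_i(M)\to\bigoplus_{\mathfrak{m}}E(R/\mathfrak{m})$. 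Thus $D_R(\mathrm{H}_i(M))\neq 0$. Since $\bigoplus_{\mathfrak{m}}E(R/\mathfrak{m})$ is injective, applying $D_R$ to a complex commutes with homology in the sense that $\mathrm{H}_{-i}(D_R(M))\cong D_R(\mathrm{H}_i(M))\neq 0$, so $D_R(M)\not\simeq 0$. Therefore $\mathrm{supp}_RD_R(M)\neq\emptyset$, and the equivalence $(1)\Leftrightarrow(3)$ of Theorem \ref{lem:2.3} yields $\mathrm{cosupp}_RM\neq\emptyset$.

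I do not expect any real obstacle here: both the reduction to $\mathrm{supp}_RD_R(M)$ and the non-vanishing of $D_R(M)$ when $M\not\simeq 0$ are essentially formal. The only point that deserves a sentence of justification is that $\mathrm{supp}_RN\neq\emptyset$ whenever $N\not\simeq 0$; if one prefers to be self-contained, one can avoid invoking this by noting that $D_R(M)\not\simeq 0$ directly produces a nonzero $\mathrm{H}_{-i}(D_R(M))$, whose support contains some minimal prime of its annihilator, and this prime lies in $\mathrm{supp}_RD_R(M)$ by the standard characterization of small support of modules.
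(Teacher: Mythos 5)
Your proposal is correct and follows essentially the same route as the paper: reduce via Theorem~\ref{lem:2.3} to the identity $\mathrm{cosupp}_RM=\mathrm{supp}_RD_R(M)$, then use that $D_R(M)\not\simeq0$ iff $M\not\simeq0$ (since $\bigoplus_{\mathfrak{m}}E(R/\mathfrak{m})$ is an injective cogenerator, an implication the paper leaves implicit and you spell out). Your alternate handling of the backward direction via $\mathrm{coSupp}_RM$ and Corollary~\ref{lem:1.4} is a harmless variation, not a different method.
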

\begin{proof} $M\not\simeq0$ if and only if $D_R(M)\not\simeq0$ if and only if $\mathrm{supp}_RD_R(M)\neq\emptyset$ if and only if $\mathrm{cosupp}_RM\neq\emptyset$ by Theorem \ref{lem:2.3}.
\end{proof}

\begin{cor}\label{lem:2.99}{\it{Let $M$ be an $R$-complex. One has that \begin{center}$\mathrm{cosupp}_RM=\mathrm{min}(\mathrm{cosupp}_R\mathrm{H}(M))$.\end{center}}}
\end{cor}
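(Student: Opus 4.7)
The plan is to reduce the claim to an analogous statement for the ``small'' support via the duality of Theorem~\ref{lem:2.3}. By the equivalence $(1) \Leftrightarrow (3)$ of that theorem, applied both to $M$ and to each homology module $\mathrm{H}_i(M)$, one has $\mathrm{cosupp}_R M = \mathrm{supp}_R D_R(M)$ together with $\mathrm{cosupp}_R \mathrm{H}_i(M) = \mathrm{supp}_R D_R(\mathrm{H}_i(M))$ for every $i \in \mathbb{Z}$. Since $E = \bigoplus_{\mathfrak{m} \in \mathrm{Max}R} E(R/\mathfrak{m})$ is an injective $R$-module, the functor $D_R(-) = \mathrm{Hom}_R(-,E)$ is exact on modules, so $\mathrm{H}_i(D_R(M)) \cong D_R(\mathrm{H}_{-i}(M))$ for every $i$. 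Combining these observations gives
\begin{center}$\bigcup_{i \in \mathbb{Z}} \mathrm{supp}_R \mathrm{H}_i(D_R(M)) = \bigcup_{i \in \mathbb{Z}} \mathrm{cosupp}_R \mathrm{H}_i(M) = \mathrm{cosupp}_R \mathrm{H}(M).$\end{center}

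It then suffices to establish the identity $\mathrm{supp}_R N = \mathrm{min}\bigl(\bigcup_i \mathrm{supp}_R \mathrm{H}_i(N)\bigr)$ for the specific complex $N := D_R(M)$, where $\mathrm{min}$ denotes the set of minimal primes with respect to inclusion; chaining this with the reduction above yields $\mathrm{cosupp}_R M = \mathrm{supp}_R D_R(M) = \mathrm{min}(\mathrm{cosupp}_R \mathrm{H}(M))$, as claimed.

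The main obstacle is precisely this small-support identity for $N = D_R(M)$. While $\mathrm{supp}_R N \subseteq \bigcup_i \mathrm{supp}_R \mathrm{H}_i(N)$ is standard (with equality in the bounded case), the minimization on the right reflects the special structure of $D_R(M)$, whose homology modules are all of the form $D_R(L)$ for $L = \mathrm{H}_{-i}(M)$. My approach would be to analyze $k(\mathfrak{p}) \otimes^{\mathbf{L}}_R D_R(M)$ via the natural isomorphism $k(\mathfrak{p}) \otimes^{\mathbf{L}}_R D_R(M) \simeq \mathrm{RHom}_R(M, E(R/\mathfrak{p}))$ (valid because $E$ is injective), and then to propagate non-vanishing via a specialization-closed argument, together with Theorem~\ref{lem:2.3}, to isolate the inclusion-minimal primes in $\mathrm{cosupp}_R \mathrm{H}(M)$.
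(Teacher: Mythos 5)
Your opening reduction is the same as the paper's: Theorem~\ref{lem:2.3}(1)$\Leftrightarrow$(3) converts $\mathrm{cosupp}_RM$ to $\mathrm{supp}_RD_R(M)$, and exactness of $D_R(-)$ identifies $\bigcup_i\mathrm{supp}_R\mathrm{H}_i(D_R(M))$ with $\mathrm{cosupp}_R\mathrm{H}(M)$. The step you then call the ``main obstacle'', namely $\mathrm{supp}_RN=\mathrm{min}\bigl(\bigcup_i\mathrm{supp}_R\mathrm{H}_i(N)\bigr)$, is exactly what the paper imports as a ready-made result --- it is \cite[Theorem~5.2]{BIK}, cited for arbitrary complexes $N$, so it neither needs nor exploits the particular shape of $N=D_R(M)$; you have misidentified a general input as a feature of $D_R(M)$. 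And your proposal stops there: the sketch via $k(\mathfrak{p})\otimes^{\mathbf{L}}_RD_R(M)\simeq\mathrm{RHom}_R(M,E(R/\mathfrak{p}))$ and a ``specialization-closed argument'' offers no mechanism for singling out the inclusion-minimal primes of $\mathrm{cosupp}_R\mathrm{H}(M)$, so the key equality is asserted, not proved. That is the genuine gap.

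A smaller inaccuracy worth correcting: your parenthetical ``(with equality in the bounded case)'' for $\mathrm{supp}_RN\subseteq\bigcup_i\mathrm{supp}_R\mathrm{H}_i(N)$ is false. Example~\ref{lem:2.13} gives a bounded complex $M$ over $k[[x,y]]$ with $\mathrm{supp}_RM\neq\mathrm{supp}_R\mathrm{H}(M)$ (see Remark~\ref{lem:3.15}(i)); the hypothesis the paper actually uses for equality is $M\in\mathrm{D}^\mathrm{n}_+(R)$, i.e.\ degreewise noetherian and homologically bounded below (Proposition~\ref{lem:3.9}(1)), not mere boundedness.
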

\begin{proof} One has the following equivalences \begin{center}$\begin{aligned}\mathfrak{p}\in\mathrm{cosupp}_RM
&\Longleftrightarrow\mathfrak{p}\in\mathrm{supp}_RD_R(M)\\
&\Longleftrightarrow\mathfrak{p}\in\mathrm{min}(\mathrm{supp}_R\mathrm{H}(D_R(M)))\\
&\Longleftrightarrow\mathfrak{p}\in\mathrm{min}(\mathrm{supp}_RD_R(\mathrm{H}(M)))\\
&\Longleftrightarrow\mathfrak{p}\in\mathrm{min}(\mathrm{cosupp}_R\mathrm{H}(M)),\end{aligned}$\end{center}
 where the first and the last equivalences are by Theorem \ref{lem:2.3}, the second one is by \cite[Theorem 5.2]{BIK} and the third one is as $\bigoplus_{\mathfrak{m}\in\mathrm{Max}R}E(R/\mathfrak{m})$ is injective.
\end{proof}

\begin{rem}\label{lem:2.6}{\rm (1) For any $R$-complex $M$, one has ${^\mathfrak{p}}\mathrm{RHom}_R(R/\mathfrak{p},M)
\simeq\mathrm{RHom}_R(R/\mathfrak{p},{^\mathfrak{p}}M)$. Hence $\mathfrak{p}\in\mathrm{cosupp}_RM\Longleftrightarrow\mathfrak{p}\in\mathrm{coSupp}_R\mathrm{RHom}_R(R/\mathfrak{p},M)$.

(2) If $M$ is an $R$-module, then $\mathrm{cosupp}_RM=\{\mathfrak{p}\in\mathrm{Spec}R\hspace{0.03cm}|\hspace{0.03cm}
{^\mathfrak{p}}\mathrm{Ext}^i_{R}(R/\mathfrak{p},M)\neq0\ \textrm{for\ some}\ i\}$.

(3) Let $\mathrm{V}$ be a specialization closed subset of $\mathrm{Spec}R$. For each $R$-module $M$, one has \begin{center}$\mathrm{cosupp}_RM\subseteq\mathrm{V}\Longleftrightarrow{^\mathfrak{p}}M=0$ for each $\mathfrak{p}\in\mathrm{Spec}R\backslash\mathrm{V}$.\end{center}

(4) For each $R$-module $M$, one has inclusions \begin{center}$\mathrm{cosupp}_RM\subseteq\mathrm{cl}(\mathrm{cosupp}_RM)=\mathrm{coSupp}_RM
\subseteq\mathrm{V}(\mathrm{Ann}_RM)$.\end{center}}
\end{rem}

\begin{prop}\label{lem:2.7}{\it{$\mathrm{(1)}$ Let $M\in\mathrm{D}^\mathrm{f}_\mathrm{b}(R)$ and $N\in\mathrm{D}(R)$. One has that \begin{center}$\mathrm{cosupp}_R\mathrm{RHom}_R(M,N)=\mathrm{supp}_RM\cap\mathrm{cosupp}_RN$.\end{center}

$\mathrm{(2)}$ Let $M\in\mathrm{D}^\mathrm{f}_+(R)$ and $N\in\mathrm{D}_+(R)$ or $M\in\mathrm{D}^\mathrm{f}_\mathrm{b}(R)$ and $N\in\mathrm{D}(R)$. One has that \begin{center}$\mathrm{cosupp}_R(M\otimes^\mathrm{L}_RN)=\mathrm{supp}_RM\cap\mathrm{cosupp}_RN$.\end{center}
In particular, for any ideal $\mathfrak{a}$ of $R$ and an arbitrary $R$-complex $M$, we have \begin{center}$\mathrm{cosupp}_R\mathrm{RHom}_R(R/\mathfrak{a},M)=
\mathrm{cosupp}_RM\cap\mathrm{V}(\mathfrak{a})=\mathrm{cosupp}_R(R/\mathfrak{a}\otimes^\mathrm{L}_RM)$.\end{center}}}
\end{prop}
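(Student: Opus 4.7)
The plan is to reduce both equalities to classical support formulas via Theorem~\ref{lem:2.3}, specifically the identification $\mathrm{cosupp}_R X = \mathrm{supp}_R D_R(X)$, together with the small-support identities
\[
\mathrm{supp}_R(M \otimes^\mathrm{L}_R X) = \mathrm{supp}_R M \cap \mathrm{supp}_R X, \qquad \mathrm{supp}_R \mathrm{RHom}_R(M, X) = \mathrm{supp}_R M \cap \mathrm{supp}_R X,
\]
which hold for $M \in \mathrm{D}^\mathrm{f}_\mathrm{b}(R)$ and arbitrary $X \in \mathrm{D}(R)$ (and more generally under the stated bounded-below hypotheses) by the standard small-support calculus of \cite{CF}.

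For part (2), the hom--tensor swap adjunction gives the unconditional iso $D_R(M \otimes^\mathrm{L}_R N) \simeq \mathrm{RHom}_R(M, D_R(N))$. Applying Theorem~\ref{lem:2.3} and the $\mathrm{RHom}$-support formula then gives
\[
\mathrm{cosupp}_R(M \otimes^\mathrm{L}_R N) = \mathrm{supp}_R \mathrm{RHom}_R(M, D_R(N)) = \mathrm{supp}_R M \cap \mathrm{supp}_R D_R(N) = \mathrm{supp}_R M \cap \mathrm{cosupp}_R N,
\]
where the last step is one further application of Theorem~\ref{lem:2.3}.

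For part (1), I would establish the dual identity $D_R\mathrm{RHom}_R(M, N) \simeq M \otimes^\mathrm{L}_R D_R(N)$ via the tensor-evaluation morphism
\[
M \otimes^\mathrm{L}_R \mathrm{RHom}_R(N, E) \longrightarrow \mathrm{RHom}_R(\mathrm{RHom}_R(M, N), E), \qquad E = \bigoplus_{\mathfrak{m} \in \mathrm{Max}R} E(R/\mathfrak{m}),
\]
which is an isomorphism when $M \in \mathrm{D}^\mathrm{f}_\mathrm{b}(R)$ because $E$ is injective and $M$ is represented by a bounded-above complex of finitely generated free modules. The argument then closes by the tensor support formula and Theorem~\ref{lem:2.3} exactly as in part (2). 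The specializations with $M = R/\mathfrak{a}$ follow from $\mathrm{supp}_R(R/\mathfrak{a}) = \mathrm{V}(\mathfrak{a})$, since then both parts yield $\mathrm{cosupp}_R N \cap \mathrm{V}(\mathfrak{a})$.

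The main obstacle is justifying the tensor-evaluation isomorphism in part (1): this is where the finiteness and bounded-above conditions on $M$ are genuinely indispensable, and this asymmetry is precisely what forces the stronger hypothesis on $M$ in (1) as compared with (2), where the unconditional adjunction $D_R(M \otimes^\mathrm{L}_R N) \simeq \mathrm{RHom}_R(M, D_R(N))$ lets us relax boundedness to the bounded-below setting.
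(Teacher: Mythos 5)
Your proposal is correct and takes essentially the same route as the paper: both proofs reduce $\mathrm{cosupp}$ to $\mathrm{supp}$ of the Matlis dual via Theorem~\ref{lem:2.3}, apply the hom--tensor adjunction $D_R(M\otimes^\mathrm{L}_RN)\simeq\mathrm{RHom}_R(M,D_R(N))$ for part~(2) and the tensor-evaluation isomorphism $D_R\mathrm{RHom}_R(M,N)\simeq M\otimes^\mathrm{L}_RD_R(N)$ for part~(1), then invoke the small-support calculus for $\otimes^\mathrm{L}$ and $\mathrm{RHom}$. Your added commentary on why tensor-evaluation is the step that forces the boundedness hypothesis in~(1) is a correct and useful observation, though the paper simply cites \cite[Theorem~2.5.6]{CF} for it.
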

\begin{proof} (1) One has the following equivalences\begin{center}$\begin{aligned}\mathfrak{p}\in\mathrm{cosupp}_R\mathrm{RHom}_R(M,N)
&\Longleftrightarrow\mathfrak{p}\in\mathrm{supp}_RD_R(\mathrm{RHom}_R(M,N))\\
&\Longleftrightarrow\mathfrak{p}\in\mathrm{supp}_R(M\otimes^\mathrm{L}_RD_R(N))\\
&\Longleftrightarrow\mathfrak{p}\in\mathrm{supp}_RM\cap\mathrm{supp}_RD_R(N)\\
&\Longleftrightarrow\mathfrak{p}\in\mathrm{supp}_RM\cap\mathrm{cosupp}_RN,\end{aligned}$\end{center}where the first and the fourth equivalences are by Theorem \ref{lem:2.3}, the second one is by \cite[Theorem 2.5.6]{CF} and the third one is by \cite[Proposition 3.12]{WW}.

(2) One has the following equivalences\begin{center}$\begin{aligned}\mathfrak{p}\in\mathrm{cosupp}_R(M\otimes^\mathrm{L}_RN)
&\Longleftrightarrow\mathfrak{p}\in\mathrm{supp}_RD_R(M\otimes^\mathrm{L}_RN)\\
&\Longleftrightarrow\mathfrak{p}\in\mathrm{supp}_R\mathrm{RHom}_R(M,D_R(N))\\
&\Longleftrightarrow\mathfrak{p}\in\mathrm{supp}_RM\cap\mathrm{supp}_RD_R(N)\\
&\Longleftrightarrow\mathfrak{p}\in\mathrm{supp}_RM\cap\mathrm{cosupp}_RN,\end{aligned}$\end{center}where the first and the fourth equivalences are by Theorem \ref{lem:2.3}, the third one is by \cite[Proposition 3.16]{WW}.
\end{proof}

The following proposition is an analogue of Proposition \ref{lem:1.7}.

\begin{prop}\label{lem:2.2}{\it{Let $M$ be an $R$-complex.

$\mathrm{(1)}$ If $\mathfrak{p}\in\mathrm{supp}_RM$, then $\mathfrak{p}\in\mathrm{cosupp}_RD_R(M)$.

$\mathrm{(2)}$ If $M\in\mathrm{D}^\mathrm{n}(R)$, then $\mathfrak{p}\in\mathrm{supp}_RM$ if and only if $\mathfrak{p}\in\mathrm{cosupp}_RD_R(M)$.}}
\end{prop}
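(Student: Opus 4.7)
The plan is to translate both conditions through Theorem \ref{lem:2.3} and the faithfulness of $D_R$, invoke the self-adjointness of the duality $D_R$, and finally reduce part (2) to finitely generated modules via Corollary \ref{lem:2.99}.

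For (1), the Hom-tensor adjunction together with the faithfulness of $D_R$ on $\mathrm{D}(R)$ (since $\bigoplus_{\mathfrak{m}\in\mathrm{Max}R}E(R/\mathfrak{m})$ is an injective cogenerator) give
\[
\mathfrak{p}\in\mathrm{supp}_RM\Longleftrightarrow k(\mathfrak{p})\otimes^{\mathrm{L}}_RM\not\simeq0\Longleftrightarrow \mathrm{RHom}_R(k(\mathfrak{p}),D_R(M))\not\simeq0.
\]
Applying Theorem \ref{lem:2.3} to the complex $D_R(M)$ and using the same adjunction a second time gives
\[
\mathfrak{p}\in\mathrm{cosupp}_RD_R(M)\Longleftrightarrow \mathrm{RHom}_R(k(\mathfrak{p}),D_R(D_R(D_R(M))))\not\simeq0.
\]
Since $D_R=\mathrm{Hom}_R(-,\bigoplus_{\mathfrak{m}}E(R/\mathfrak{m}))$ is exact (its target is injective) and self-adjoint, the strict triangle identity $D_R(\eta_M)\circ\eta_{D_R(M)}=\mathrm{id}_{D_R(M)}$ descends to $\mathrm{D}(R)$ and exhibits $D_R(M)$ as a direct summand of $D_R(D_R(D_R(M)))$ there. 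Applying $\mathrm{RHom}_R(k(\mathfrak{p}),-)$ preserves the splitting, so the first non-vanishing forces the second, yielding (1).

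For (2), the forward direction is (1). For the converse, injectivity of $\bigoplus_{\mathfrak{m}}E(R/\mathfrak{m})$ gives $\mathrm{H}_i(D_R(M))\cong D_R(\mathrm{H}_{-i}(M))$, so Corollary \ref{lem:2.99} yields
\[
\mathrm{cosupp}_RD_R(M)=\min\Bigl(\bigcup_{i\in\mathbb{Z}}\mathrm{cosupp}_RD_R(\mathrm{H}_{-i}(M))\Bigr).
\]
For each finitely generated module $K=\mathrm{H}_{-i}(M)$, chaining together part (1) applied to $K$, the general inclusion $\mathrm{cosupp}_R\subseteq\mathrm{coSupp}_R$ (to be established in Section 4; also immediate from the definitions, since $\mathrm{RHom}_R(R/\mathfrak{p},{^\mathfrak{p}}(-))\not\simeq0$ forces ${^\mathfrak{p}}(-)\not\simeq0$), Lemma \ref{lem:0.1}, and the identity $\mathrm{supp}_RK=\mathrm{Supp}_RK$ for finitely generated $K$ (by Nakayama) produces the sandwich
\[
\mathrm{supp}_RK\subseteq\mathrm{cosupp}_RD_R(K)\subseteq\mathrm{coSupp}_RD_R(K)=\mathrm{Supp}_RK=\mathrm{supp}_RK,
\]
forcing equality. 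Substituting back and invoking \cite[Theorem 5.2]{BIK} (as used in the proof of Corollary \ref{lem:2.99}) gives $\mathrm{cosupp}_RD_R(M)=\min(\mathrm{supp}_R\mathrm{H}(M))=\mathrm{supp}_RM$.

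The main obstacle is the retract step in (1); once the self-adjointness and exactness of $D_R$ are in place, the triangle identity does the rest. In (2) the key input is the module-level identity $\mathrm{cosupp}_RD_R(K)=\mathrm{supp}_RK$ for finitely generated $K$, which is obtained by sandwiching between Lemma \ref{lem:0.1} and part (1) and using the fact that small and big support coincide for finitely generated modules.
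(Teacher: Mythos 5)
Your proof is correct but takes a genuinely different route from the paper's for both parts. For (1), the paper passes through the auxiliary complex $R/\mathfrak{p}\otimes^{\mathrm{L}}_RM$: it notes $\mathfrak{p}\in\mathrm{Supp}_R(R/\mathfrak{p}\otimes^{\mathrm{L}}_RM)$, applies Proposition \ref{lem:1.7}(1) to get $\mathfrak{p}\in\mathrm{coSupp}_RD_R(R/\mathfrak{p}\otimes^{\mathrm{L}}_RM)$, and then uses the adjunction isomorphism $D_R(R/\mathfrak{p}\otimes^{\mathrm{L}}_RM)\simeq\mathrm{RHom}_R(R/\mathfrak{p},D_R(M))$ together with Remark \ref{lem:2.6}(1) to land in $\mathrm{cosupp}_RD_R(M)$. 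You instead translate both conditions into $\mathrm{RHom}_R(k(\mathfrak{p}),-)$ statements via faithfulness of $D_R$ and Theorem \ref{lem:2.3}, and close the gap with the contravariant-adjunction triangle identity exhibiting $D_R(M)$ as a retract of $D_R^3(M)$ --- a clean piece of abstract nonsense that sidesteps Proposition \ref{lem:1.7} and Remark \ref{lem:2.6} entirely. For (2), the paper again bootstraps through $R/\mathfrak{p}\otimes^{\mathrm{L}}_RM$, observing that this complex lies in $\mathrm{D}^{\mathrm{n}}(R)$ so Proposition \ref{lem:1.7}(2) applies, whereas you reduce to the module level via Corollary \ref{lem:2.99}, establish the identity $\mathrm{cosupp}_RD_R(K)=\mathrm{supp}_RK$ for finitely generated $K$ by sandwiching between part (1), the elementary inclusion $\mathrm{cosupp}\subseteq\mathrm{coSupp}$, and Lemma \ref{lem:0.1}, and then reassemble with the Benson--Iyengar--Krause minimality theorem. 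The paper's route is more economical because it reuses the big-cosupport results wholesale; your route is more self-contained at the module level and makes the finite-generation hypothesis do visible work, but it leans on Corollary \ref{lem:2.99} (and hence on the nontrivial input from BIK) where the paper does not. Both arguments are sound.
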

\begin{proof} (1)
Let $\mathfrak{p}\in\mathrm{supp}_RM$. Then $\mathfrak{p}\in\mathrm{Supp}_R(R/\mathfrak{p}\otimes^\mathrm{L}_RM)$, and so $\mathfrak{p}\in\mathrm{coSupp}_RD_R(R/\mathfrak{p}\otimes^\mathrm{L}_RM)$ by Proposition \ref{lem:1.7}(1). But
$D_R(R/\mathfrak{p}\otimes^\mathrm{L}_RM)\simeq\mathrm{RHom}_R(R/\mathfrak{p},D_R(M))$, it follows from Remark \ref{lem:2.6}(1) that $\mathfrak{p}\in\mathrm{cosupp}_RD_R(M)$.

(2) This follows from Proposition \ref{lem:1.7}(2) since $R/\mathfrak{p}\otimes^\mathrm{L}_RM\in\mathrm{D}^\mathrm{n}(R)$.
\end{proof}

\bigskip
\section{\bf Relations between big and small cosupport}
We devote this section to some relations between ``big'' and ``small'' cosupport. We show that $\mathrm{cosupp}_RM\subseteq\mathrm{coSupp}_RM$ and the inclusion may be strict.

\begin{prop}\label{lem:2.12}{\it{Let $M$ be an $R$-complex. The sets $\mathrm{supp}_RM$ and $\mathrm{cosupp}_RM$ have the same maximal elements with respect to containment, i.e.,
$\mathrm{max}(\mathrm{supp}_RM)=\mathrm{max}(\mathrm{cosupp}_RM)$. Moreover, $\mathrm{max}(\mathrm{cosupp}_RM)=\mathrm{max}(\mathrm{co\textrm{-}supp}_RM)$.}}
\end{prop}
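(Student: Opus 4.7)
My plan is to pass to $D_R(M)$ and exploit the identifications supplied by Theorem \ref{lem:2.3}, together with the known fact that $\mathrm{max}(\mathrm{supp}_R N)=\mathrm{max}(\mathrm{co\textrm{-}supp}_R N)$ for every $R$-complex $N$ (see \cite{WW}), which is the crucial input borrowed from outside the paper.

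The first step is to establish the Matlis-style identity
\[
\mathrm{supp}_R M=\mathrm{co\textrm{-}supp}_R D_R(M).
\]
For any prime $\mathfrak{p}$, tensor-Hom adjunction gives
\[
\mathrm{RHom}_R(k(\mathfrak{p}),D_R(M))\simeq D_R(k(\mathfrak{p})\otimes^\mathrm{L}_R M),
\]
and since $\bigoplus_{\mathfrak{m}\in\mathrm{Max}R}E(R/\mathfrak{m})$ is a faithful injective cogenerator of $\mathrm{Mod}R$, the functor $D_R$ detects zero objects in $\mathrm{D}(R)$ (because $\mathrm{H}_{-i}D_R(X)\cong\mathrm{Hom}_R(\mathrm{H}_i(X),\bigoplus E(R/\mathfrak{m}))$ vanishes only when $\mathrm{H}_i(X)=0$). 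Thus $k(\mathfrak{p})\otimes^\mathrm{L}_R M\not\simeq 0$ if and only if $\mathrm{RHom}_R(k(\mathfrak{p}),D_R(M))\not\simeq 0$, giving the claimed equality.

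Next I assemble the chain. Theorem \ref{lem:2.3} already yields $\mathrm{cosupp}_R M=\mathrm{supp}_R D_R(M)$. Applying the cited fact to $N=D_R(M)$ and combining with the Matlis identity gives
\[
\mathrm{max}(\mathrm{cosupp}_R M)=\mathrm{max}(\mathrm{supp}_R D_R(M))=\mathrm{max}(\mathrm{co\textrm{-}supp}_R D_R(M))=\mathrm{max}(\mathrm{supp}_R M),
\]
which is the first equality. Applying the cited fact once more to $N=M$ gives $\mathrm{max}(\mathrm{supp}_R M)=\mathrm{max}(\mathrm{co\textrm{-}supp}_R M)$, and combining this with what was just proved yields the second equality $\mathrm{max}(\mathrm{cosupp}_R M)=\mathrm{max}(\mathrm{co\textrm{-}supp}_R M)$.

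The main obstacle is guaranteeing $\mathrm{max}(\mathrm{supp}_R N)=\mathrm{max}(\mathrm{co\textrm{-}supp}_R N)$ in the full generality of unbounded complexes over an arbitrary noetherian ring. If it is not stated in precisely this form in \cite{WW}, one proves it by a localization argument: for $\mathfrak{p}$ maximal in $\mathrm{supp}_R M$ the complex $M_\mathfrak{p}$ satisfies $\mathrm{supp}_{R_\mathfrak{p}}M_\mathfrak{p}=\{\mathfrak{p}R_\mathfrak{p}\}$, and a Nakayama-type computation (via a minimal injective resolution or through derived torsion/completion functors in the style of \cite{BIK}) forces $\mathfrak{p}R_\mathfrak{p}\in\mathrm{co\textrm{-}supp}_{R_\mathfrak{p}}M_\mathfrak{p}$, hence $\mathfrak{p}\in\mathrm{co\textrm{-}supp}_R M$; the reverse containment is handled symmetrically after noting that both sides lie among maximal ideals of $R$.
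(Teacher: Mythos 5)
Your proof is correct, and it takes a genuinely different (and arguably cleaner) route than the paper's. The paper proves the first equality $\mathrm{max}(\mathrm{supp}_RM)=\mathrm{max}(\mathrm{cosupp}_RM)$ by a two-sided inclusion argument at the level of maximal elements, passing through $R/\mathfrak{p}\otimes^\mathrm{L}_R D_R(M)$ and invoking specific results of Sather-Wagstaff--Wicklein (their Prop.\ 4.10 and Prop.\ 4.7(b)) together with [BIK2, Prop.\ 5.4], and only then derives the second equality by citing [BIK2, Theorem 4.13]. You instead derive everything from two duality identities and one application of the same external theorem: you establish $\mathrm{supp}_RM=\mathrm{co\textrm{-}supp}_RD_R(M)$ by Hom-tensor adjunction $\mathrm{RHom}_R(k(\mathfrak{p}),D_R(M))\simeq D_R(k(\mathfrak{p})\otimes^\mathrm{L}_RM)$ plus faithfulness of $\bigoplus_{\mathfrak{m}}E(R/\mathfrak{m})$ (which indeed forces $D_R$ to detect nonvanishing in $\mathrm{D}(R)$, since $\mathrm{H}_{-i}D_R(X)\cong\mathrm{Hom}_R(\mathrm{H}_iX,\bigoplus E(R/\mathfrak{m}))$), combine with Theorem~\ref{lem:2.3}'s equivalence $\mathrm{cosupp}_RM=\mathrm{supp}_RD_R(M)$, and use $\mathrm{max}(\mathrm{supp}_RN)=\mathrm{max}(\mathrm{co\textrm{-}supp}_RN)$ for $N=D_R(M)$ and $N=M$. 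The chain closes correctly. Two small points: the correct citation for the key external fact is [BIK2, Theorem 4.13] rather than [WW], which is exactly where the paper draws it from; and your paragraph sketching a fallback proof of that fact is unnecessary given that the paper already relies on that theorem, though it is not wrong. The structural payoff of your route is that it isolates exactly one cited result about BIK support/cosupport and otherwise proceeds purely by the duality $D_R$, whereas the paper's argument is more computational and spreads the external dependence across several propositions.
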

\begin{proof} We prove that $\mathrm{max}(\mathrm{supp}_RM)\subseteq\mathrm{cosupp}_RM$ and $\mathrm{max}(\mathrm{cosupp}_RM)\subseteq\mathrm{supp}_RM$.

If $\mathfrak{p}\in\mathrm{max}(\mathrm{supp}_RM)$, then $\mathrm{co\textrm{-}supp}_R(R/\mathfrak{p}\otimes^\mathrm{L}_RD_R(M))=\{\mathfrak{p}\}$ by \cite[Proposition 4.10]{WW}. Hence $\mathrm{RHom}_R(D_R(M),k(\mathfrak{p}))\simeq\mathrm{RHom}_R(R/\mathfrak{p}\otimes^\mathrm{L}_RD_R(M),
E(R/\mathfrak{p}))\not\simeq0$ by \cite[Proposition 5.4]{BIK2} and so $\mathfrak{p}\in\mathrm{cosupp}_RM$ by Theorem \ref{lem:2.3}.
 If $\mathfrak{p}\in\mathrm{max}(\mathrm{cosupp}_RM)$, then $\mathrm{cosupp}_R(R/\mathfrak{p}\otimes^\mathrm{L}_RM)=\{\mathfrak{p}\}$, so $\mathfrak{p}\in\mathrm{max}(\mathrm{supp}_RD_R(R/\mathfrak{p}\otimes^\mathrm{L}_RM))$. Thus \cite[Proposition 4.7(b)]{WW} implies that
$\mathfrak{p}\in\mathrm{co\textrm{-}supp}_R\mathrm{RHom}_R(R/\mathfrak{p},D_R(M))$. Consequently,
$\mathfrak{p}\in\mathrm{supp}_RM$ by \cite[Proposition 4.10]{WW}.

The second statement follows from \cite[Theorem 4.13]{BIK2}.
\end{proof}

\begin{prop}\label{lem:2.4}{\it{For every $R$-complex $M$, one has an inclusion $\mathrm{cosupp}_RM\subseteq\mathrm{coSupp}_RM$; equality holds if $R$ is a semi-local complete ring and
 $M\in\mathrm{D}^\mathrm{a}_-(R)$.}}
\end{prop}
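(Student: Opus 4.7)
The inclusion is almost tautological: if $\mathrm{RHom}_R(R/\mathfrak{p},{^\mathfrak{p}}M)\not\simeq 0$ then a fortiori ${^\mathfrak{p}}M\not\simeq 0$, since $\mathrm{RHom}_R(-,0)=0$. Hence $\mathrm{cosupp}_RM\subseteq\mathrm{coSupp}_RM$ holds unconditionally, and I would dispose of this in a single line.

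For the equality under the stated hypotheses, the plan is to convert both cosupports into supports of the Matlis dual $D_R(M)$ and then invoke a classical equality of supports. By Theorem \ref{lem:1.3} we have $\mathrm{coSupp}_RM=\mathrm{Supp}_RD_R(M)$, and by Theorem \ref{lem:2.3} we have $\mathrm{cosupp}_RM=\mathrm{supp}_RD_R(M)$; so the task is reduced to proving
\begin{center}$\mathrm{Supp}_RD_R(M)=\mathrm{supp}_RD_R(M)$.\end{center}
Because $\bigoplus_{\mathfrak{m}\in\mathrm{Max}R}E(R/\mathfrak{m})$ is injective, one has $\mathrm{H}_i(D_R(M))\cong D_R(\mathrm{H}_{-i}(M))$. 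Under the assumption that $R$ is semi-local complete, $R$ decomposes as a finite product of complete local rings, so classical Matlis duality shows that $D_R$ restricts to a contravariant equivalence between artinian and noetherian $R$-modules. Combined with $M\in\mathrm{D}^\mathrm{a}_-(R)$, this upgrades the bounded-above artinian complex $M$ to a bounded-below noetherian complex $D_R(M)\in\mathrm{D}^\mathrm{n}_+(R)$.

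The proof then concludes by the standard fact that for any $N\in\mathrm{D}^\mathrm{n}_+(R)$ one has $\mathrm{supp}_RN=\mathrm{Supp}_RN$. This is a local matter: after replacing $R$ by $R_\mathfrak{p}$ one may assume $(R,\mathfrak{p})$ is local with residue field $k(\mathfrak{p})$, take a bounded-below minimal finitely generated free resolution $F\xrightarrow{\simeq} N$, and observe that $k(\mathfrak{p})\otimes^{\mathbf{L}}_RN\simeq k(\mathfrak{p})\otimes_RF$ has zero differential by minimality; hence its homology is nonzero precisely when $F$ is nonzero, i.e.\ when $N\not\simeq 0$. Applying this to $N=D_R(M)$ completes the argument. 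The main obstacle is really this last identification: everything else reduces cleanly to Theorems \ref{lem:1.3} and \ref{lem:2.3}, whereas the equality $\mathrm{supp}=\mathrm{Supp}$ on $\mathrm{D}^\mathrm{n}_+(R)$ draws on the separate input of minimal free resolutions for complexes with finitely generated homology bounded below, which should either be cited or briefly verified.
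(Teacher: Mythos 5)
Your proof is correct, and it takes a genuinely different route from the paper's. Your treatment of the inclusion is also slightly more elementary: the paper deduces $\mathrm{cosupp}_RM\subseteq\mathrm{coSupp}_RM$ from Theorems \ref{lem:1.3} and \ref{lem:2.3} together with $\mathrm{supp}_RD_R(M)\subseteq\mathrm{Supp}_RD_R(M)$, whereas your one-line observation that $\mathrm{RHom}_R(R/\mathfrak{p},{^\mathfrak{p}}M)\not\simeq0$ forces ${^\mathfrak{p}}M\not\simeq0$ reads directly off Definitions \ref{lem:1.1} and \ref{lem:2.1} with no machinery at all.

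For the equality, the paper stays entirely on the ``co''-side: given $\mathfrak{p}\in\mathrm{coSupp}_RM$, it cites Richardson to conclude that ${^\mathfrak{p}}M$ lies in $\mathrm{D}^\mathrm{a}_-(R_\mathfrak{p})$, sets $i=\mathrm{inf}{^\mathfrak{p}}M$, and uses a result of Yassemi to the effect that a nonzero artinian module over the local ring $R_\mathfrak{p}$ has a nonzero $\mathrm{Hom}_{R_\mathfrak{p}}(k(\mathfrak{p}),-)$, applied to $\mathrm{H}_i({^\mathfrak{p}}M)$. You instead push everything through the Matlis dual: by Theorems \ref{lem:1.3} and \ref{lem:2.3} the claim reduces to $\mathrm{Supp}_RD_R(M)=\mathrm{supp}_RD_R(M)$; then semi-local completeness of $R$ turns Matlis duality into an anti-equivalence between artinian and noetherian modules, so that $M\in\mathrm{D}^\mathrm{a}_-(R)$ forces $D_R(M)\in\mathrm{D}^\mathrm{n}_+(R)$; and the equality $\mathrm{supp}=\mathrm{Supp}$ on $\mathrm{D}^\mathrm{n}_+(R)$ is then the standard minimal-free-resolution computation. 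This is a clean reduction and each step is sound. Two small remarks. First, your key input $\mathrm{supp}_RN=\mathrm{Supp}_RN$ for $N\in\mathrm{D}^\mathrm{n}_+(R)$ does appear in the paper, but only later (Proposition \ref{lem:3.9}(1) combined with the module case), and the paper's proof of that proposition cites the present Proposition \ref{lem:2.4}; so it is essential that you supply, as you do, an independent argument via minimal free resolutions rather than citing it. Second, your route has the side benefit of working with $D_R(M)$, which is visibly homologically bounded below, and hence avoids having to justify that $\mathrm{inf}\,{^\mathfrak{p}}M$ is finite, a point the paper's argument leaves implicit.
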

\begin{proof} The inclusion follows from Theorems \ref{lem:1.3} and \ref{lem:2.3} since $\mathrm{supp}_RD_R(M)\subseteq\mathrm{Supp}_RD_R(M)$. Now let $M\in\mathrm{D}^\mathrm{a}_-(R)$ and $\mathfrak{p}\in\mathrm{coSupp}_RM$, $i=\mathrm{inf}{^\mathfrak{p}}M$. Then ${^\mathfrak{p}}M\in\mathrm{D}^\mathrm{a}_-(R_\mathfrak{p})$ by \cite[Theorem 2.3]{R}, and so $\mathrm{H}_i(\mathrm{RHom}_{R_\mathfrak{p}}(k(\mathfrak{p}),{^\mathfrak{p}}M))\cong
\mathrm{Hom}_{R_\mathfrak{p}}(k(\mathfrak{p}),\mathrm{H}_i({^\mathfrak{p}}M))\neq0$ by \cite[Theorem 4.3]{Y}.
Consequently, $\mathrm{RHom}_{R}(R/\mathfrak{p},{^\mathfrak{p}}M)\not\simeq0$ and $\mathfrak{p}\in\mathrm{cosupp}_RM$, as claimed.
\end{proof}

The next example shows that the inclusion in the proposition \ref{lem:2.4} may be
strict:

\begin{exa}\label{lem:2.13}{\rm (\cite[Example 9.4]{BIK}) Let $k$ be a field and $R=k[[x, y]]$ the power series ring in indeterminates
$x, y$, and set $\mathfrak{m}=(x, y)$ the maximal ideal of $R$. The minimal injective resolution of $R$ has
the form: \begin{center}$\cdots\rightarrow0\rightarrow Q\rightarrow\coprod_{\mathrm{ht}\mathfrak{p}=1}E(R/\mathfrak{p})\rightarrow E(R/\mathfrak{m})\rightarrow0\rightarrow\cdots$,\end{center}
where $Q$ denotes the fraction field of $R$. Let $M$ denote the truncated complex
 \begin{center}$\cdots\rightarrow0\rightarrow Q\rightarrow\coprod_{\mathrm{ht}\mathfrak{p}=1}E(R/\mathfrak{p})\rightarrow0\rightarrow\cdots$.\end{center}One has that $\mathrm{coSupp}_RD_R(M)=\mathrm{Spec}R$ since $\mathrm{Spec}R=\mathrm{Supp}_RM\subseteq\mathrm{coSupp}_RD_R(M)$. But $\mathfrak{m}\not\in\mathrm{cosupp}_RD_R(M)$. In fact, if $\mathfrak{m}\in\mathrm{cosupp}_RD_R(M)$ then $\mathfrak{m}\in\mathrm{supp}_RD_R(M)$ by Proposition \ref{lem:2.12}, and hence $\mathfrak{m}\in\mathrm{cosupp}_RM$ by Theorem \ref{lem:2.3}. Consequently, $\mathfrak{m}\in\mathrm{supp}_RM$ by Proposition \ref{lem:2.12} again, which is a contradiction since $\mathrm{supp}_RM=\mathrm{Spec}R\backslash\{\mathfrak{m}\}$.}
\end{exa}

\begin{cor}\label{lem:2.9}{\it{Let $M$ be an $R$-complex. The sets $\mathrm{cosupp}_RM$ and $\mathrm{coSupp}_RM$ have the same minimal elements with respect to containment, i.e. $\mathrm{min}(\mathrm{cosupp}_RM)=\mathrm{min}(\mathrm{coSupp}_RM)$.}}
\end{cor}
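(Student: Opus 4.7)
The plan is to express both $\min(\mathrm{cosupp}_RM)$ and $\min(\mathrm{coSupp}_RM)$ as the minimum of one common set of primes attached to the homology of $M$, and then invoke a trivial identity for specialization closures.

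First I would apply Corollary \ref{lem:2.99} to obtain $\mathrm{cosupp}_RM=\min(\mathrm{cosupp}_R\mathrm{H}(M))$, so that idempotence of $\min$ gives $\min(\mathrm{cosupp}_RM)=\min(\mathrm{cosupp}_R\mathrm{H}(M))$, where throughout I identify $\mathrm{H}(M)=\bigoplus_{i\in\mathbb{Z}}\mathrm{H}_i(M)$ and $\mathrm{cosupp}_R\mathrm{H}(M)=\bigcup_{i\in\mathbb{Z}}\mathrm{cosupp}_R\mathrm{H}_i(M)$. Next, by Theorem \ref{lem:1.2}, $\mathrm{coSupp}_RM=\bigcup_{i\in\mathbb{Z}}\mathrm{coSupp}_R\mathrm{H}_i(M)$, and Remark \ref{lem:2.6}(4) applied to each module summand gives $\mathrm{coSupp}_R\mathrm{H}_i(M)=\mathrm{cl}(\mathrm{cosupp}_R\mathrm{H}_i(M))$; since $\mathrm{cl}$ commutes with arbitrary unions this yields $\mathrm{coSupp}_RM=\mathrm{cl}(\mathrm{cosupp}_R\mathrm{H}(M))$.

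It then suffices to observe the elementary set-theoretic fact that $\min(A)=\min(\mathrm{cl}(A))$ for every $A\subseteq\mathrm{Spec}R$. Indeed, any $\mathfrak{p}\in\min(\mathrm{cl}(A))$ contains some $\mathfrak{q}\in A$ by definition of $\mathrm{cl}$, and minimality forces $\mathfrak{p}=\mathfrak{q}\in\min(A)$; conversely, a prime of $\mathrm{cl}(A)$ strictly below an element $\mathfrak{p}\in\min(A)$ would contain an element of $A$ strictly below $\mathfrak{p}$, contradicting minimality in $A$. Setting $A=\mathrm{cosupp}_R\mathrm{H}(M)$ gives $\min(\mathrm{coSupp}_RM)=\min(\mathrm{cosupp}_R\mathrm{H}(M))=\min(\mathrm{cosupp}_RM)$, as required.

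The conceptual content is already encoded in the three cited statements, so there is no real obstacle; the only point that might merit a brief remark is the consistency of the interpretation of $\mathrm{cosupp}_R\mathrm{H}(M)$, which is clear once $\mathrm{H}(M)$ is viewed as the direct sum module so that cosupport of a direct sum decomposes as a union.
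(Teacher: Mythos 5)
Your argument is correct, but it takes a genuinely different route from the paper. The paper's proof is a three-line dualization: by Theorem \ref{lem:2.3} one has $\mathrm{cosupp}_RM=\mathrm{supp}_RD_R(M)$, by Theorem \ref{lem:1.3} one has $\mathrm{coSupp}_RM=\mathrm{Supp}_RD_R(M)$, and then one invokes \cite[Proposition 3.14]{WW}, which states that $\mathrm{supp}_RN$ and $\mathrm{Supp}_RN$ have the same minimal primes; applied to $N=D_R(M)$ this gives the claim directly, with no passage to homology. You instead reduce to the module level: Corollary \ref{lem:2.99} and Theorem \ref{lem:1.2} express $\min(\mathrm{cosupp}_RM)$ and $\mathrm{coSupp}_RM$ in terms of $\mathrm{cosupp}_R\mathrm{H}_i(M)$ and $\mathrm{coSupp}_R\mathrm{H}_i(M)$, Remark \ref{lem:2.6}(4) gives the module identity $\mathrm{coSupp}_RK=\mathrm{cl}(\mathrm{cosupp}_RK)$, and the elementary identity $\min(\mathrm{cl}(A))=\min(A)$ closes the argument. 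Both proofs are sound; yours is self-contained within the paper while the paper's leans on the external support-theoretic fact from \cite{WW} and is shorter. One small caution about your closing remark: the identification $\mathrm{cosupp}_R\mathrm{H}(M)=\bigcup_i\mathrm{cosupp}_R\mathrm{H}_i(M)$ is correct, but the justification ``cosupport of a direct sum decomposes as a union'' is not automatic, because $D_R(-)$ turns coproducts into products and $\mathrm{supp}_R$ of a product need not be the union of supports. What makes the identity hold here is that $\mathrm{H}(M)$ is a complex with zero differentials, one module per degree; such a complex is simultaneously the coproduct and the product of the $\Sigma^i\mathrm{H}_i(M)$ in $\mathrm{D}(R)$, so $D_R(\mathrm{H}(M))$ is again a coproduct of shifted modules and $k(\mathfrak{p})\otimes^{\mathrm{L}}_R(-)$ then commutes with the coproduct. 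Viewing $\mathrm{H}(M)$ as a single module $\bigoplus_i\mathrm{H}_i(M)$ concentrated in degree zero, as your parenthetical suggests, would actually break this step.
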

\begin{proof} This follows from Theorems \ref{lem:1.3}, \ref{lem:2.3} and \cite[Proposition 3.14]{WW}.
\end{proof}

\begin{cor}\label{lem:2.10}{\it{Let $M$ be an $R$-complex.

$(1)$ For an ideal $\mathfrak{a}$ of $R$, $\mathrm{coSupp}_RM\subseteq\mathrm{V}(\mathfrak{a})$ if and only if $\mathrm{cosupp}_RM\subseteq\mathrm{V}(\mathfrak{a})$.

$(2)$ The Zariski closures of $\mathrm{coSupp}_RM$ and $\mathrm{cosupp}_RM$ are equal.}}
\end{cor}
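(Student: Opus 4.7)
The plan is to derive both parts from the identifications $\mathrm{coSupp}_RM = \mathrm{Supp}_R D_R(M)$ coming from Theorem \ref{lem:1.3} and $\mathrm{cosupp}_RM = \mathrm{supp}_R D_R(M)$ coming from Theorem \ref{lem:2.3}. These convert the question about the two cosupports into one about the usual big and small support of the single complex $D_R(M)$.

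For part (1), the forward implication is immediate from Proposition \ref{lem:2.4}. For the reverse, I would assume $\mathrm{cosupp}_RM \subseteq \mathrm{V}(\mathfrak{a})$ and take $\mathfrak{p} \in \mathrm{coSupp}_RM$; under the identifications above this says $\mathfrak{p} \in \mathrm{Supp}_R D_R(M)$ while $\mathrm{supp}_R D_R(M) \subseteq \mathrm{V}(\mathfrak{a})$. Invoking the standard equality $\mathrm{Supp}_R N = \bigcup_{\mathfrak{q}\in \mathrm{supp}_R N}\mathrm{V}(\mathfrak{q})$, valid for every $R$-complex $N$, produces some $\mathfrak{q} \in \mathrm{supp}_R D_R(M) = \mathrm{cosupp}_RM$ with $\mathfrak{q} \subseteq \mathfrak{p}$. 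Then $\mathfrak{a} \subseteq \mathfrak{q} \subseteq \mathfrak{p}$, so $\mathfrak{p} \in \mathrm{V}(\mathfrak{a})$, completing the claim.

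For part (2), I would note that the Zariski closure of any subset $\mathcal{U} \subseteq \mathrm{Spec}R$ is by definition the smallest closed set $\mathrm{V}(\mathfrak{a})$ containing $\mathcal{U}$. Part (1) shows that the two families $\{\mathfrak{a} : \mathrm{cosupp}_RM \subseteq \mathrm{V}(\mathfrak{a})\}$ and $\{\mathfrak{a} : \mathrm{coSupp}_RM \subseteq \mathrm{V}(\mathfrak{a})\}$ coincide, hence their least closed members coincide as well, giving equal Zariski closures.

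The main obstacle is pinning down a clean reference for the fact $\mathrm{Supp}_R N = \mathrm{cl}(\mathrm{supp}_R N)$ invoked in (1). One direction is automatic from $\mathrm{supp}_R N \subseteq \mathrm{Supp}_R N$ together with the specialization closedness of $\mathrm{Supp}_R N$; the other, saying every $\mathfrak{p} \in \mathrm{Supp}_R N$ contains a small-support prime, localizes at $\mathfrak{p}$ to the standard claim that a nonzero complex over a local ring has nonempty small support. This is precisely the underlying input for \cite[Proposition 3.14]{WW} that was already used in Corollary \ref{lem:2.9}, so the tool required is already in the paper's toolkit.
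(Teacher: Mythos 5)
Your proof is correct and takes essentially the same approach as the paper, which likewise reduces the statement to the corresponding one for $\mathrm{Supp}$ and $\mathrm{supp}$ via the identifications $\mathrm{coSupp}_R M = \mathrm{Supp}_R D_R(M)$ (Theorem \ref{lem:1.3}) and $\mathrm{cosupp}_R M = \mathrm{supp}_R D_R(M)$ (Theorem \ref{lem:2.3}). The paper simply cites \cite[Proposition 3.15]{WW} for the support-side facts, whereas you unwind them to the primitive input that $\mathrm{Supp}_R N$ is the specialization closure of $\mathrm{supp}_R N$; that is exactly the content of the cited proposition, so the two arguments coincide.
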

\begin{proof} This follows from Theorems \ref{lem:1.3}, \ref{lem:2.3} and \cite[Proposition 3.15]{WW}.
\end{proof}

\begin{prop}\label{lem:4.9}{\it{$\mathrm{(1)}$ If $M$ is in $\mathrm{D}^\mathrm{n}(R)$, then $\mathrm{cosupp}_RM\subseteq\mathrm{co\textrm{-}supp}_RM$ and $\mathrm{coSupp}_RM\subseteq\mathrm{Co\textrm{-}supp}_RM$.

$\mathrm{(2)}$ Assume that $R$ is a semi-local ring and $M\in\mathrm{D}(R)$. If each $\mathrm{H}_i(M)$ is a Matlis reflexive $R$-module, then $\mathrm{co\textrm{-}supp}_RM=\mathrm{cosupp}_RM$ and $\mathrm{Co\textrm{-}supp}_RM=\mathrm{coSupp}_RM$.}}
\end{prop}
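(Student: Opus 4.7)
\medskip
\noindent\emph{Proposal.} For part (1), the plan is to show that noetherianness of $\mathrm{H}(M)$ forces both cosupports to live on $\mathrm{Max}R$ and to coincide there with $\mathrm{Supp}_RM$, from which the desired inclusions follow easily. First I would exploit that for each noetherian $\mathrm{H}_i(M)$, $\mathrm{Hom}$ commutes with the direct sum defining $D_R$, so $D_R(\mathrm{H}_i(M))=\bigoplus_{\mathfrak{m}\in\mathrm{Max}R}D_\mathfrak{m}(\mathrm{H}_i(M))$ decomposes as a sum of $\mathfrak{m}$-torsion modules. A standard ``unit outside $\mathfrak{p}$'' argument then shows $\mathrm{Hom}_R(D_\mathfrak{m}(\mathrm{H}_i(M)),E(R/\mathfrak{p}))=0$ whenever $\mathfrak{p}\neq\mathfrak{m}$, so that ${^\mathfrak{p}}M\simeq0$ for non-maximal $\mathfrak{p}$; at a maximal $\mathfrak{m}$ Matlis duality identifies ${^\mathfrak{m}}M$ with the $\mathfrak{m}$-adic completion $\widehat{M_\mathfrak{m}}$. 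Combined with Theorems \ref{lem:1.3} and \ref{lem:2.3}, this yields $\mathrm{coSupp}_RM=\mathrm{cosupp}_RM=\mathrm{Max}R\cap\mathrm{Supp}_RM$.

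For any $\mathfrak{m}$ in this common set, flat base change gives $\mathrm{RHom}_R(R_\mathfrak{m},M)\otimes_R^\mathrm{L}R_\mathfrak{m}\simeq M_\mathfrak{m}\not\simeq0$, so $\mathfrak{m}\in\mathrm{Co\textrm{-}supp}_RM$. Similarly $\mathrm{RHom}_R(k(\mathfrak{m}),M)\otimes_R^\mathrm{L}R_\mathfrak{m}\simeq\mathrm{RHom}_{R_\mathfrak{m}}(k(\mathfrak{m}),M_\mathfrak{m})$, which is nonzero because $M_\mathfrak{m}$ is a nonzero complex with noetherian homology over the local ring $R_\mathfrak{m}$, giving $\mathfrak{m}\in\mathrm{co\textrm{-}supp}_RM$. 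This finishes (1).

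For part (2), I would invoke Remark \ref{lem:4.3}(8): Matlis reflexivity of each $\mathrm{H}_i(M)$ gives $M\simeq D_R(D_R(M))$, which in the semi-local setting equals $M^\thicksim$ (because $D_R=\bigoplus_\mathfrak{m}D_\mathfrak{m}$ on the finitely many summands, and $\mathfrak{m}$-torsion is invisible to $D_\mathfrak{n}$ for $\mathfrak{n}\neq\mathfrak{m}$). Substituting $M\simeq M^\thicksim$ into Theorems \ref{lem:1.3}(4) and \ref{lem:2.3}(6) collapses the characterizations of $\mathrm{coSupp}_RM$ and $\mathrm{cosupp}_RM$ precisely to the defining conditions of $\mathrm{Co\textrm{-}supp}_RM$ and $\mathrm{co\textrm{-}supp}_RM$, respectively. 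The main potential obstacle is the noetherian-local nonvanishing step ``$\mathrm{RHom}_{R_\mathfrak{m}}(k(\mathfrak{m}),M_\mathfrak{m})\not\simeq0$ whenever $M_\mathfrak{m}\not\simeq0$'' used in the second paragraph: this is immediate for bounded-below $M_\mathfrak{m}$, but if $M$ is unbounded one may need to fall back on Theorem \ref{lem:1.2} (and its analogue for the small cosupport) to reduce the problem to individual noetherian modules $\mathrm{H}_i(M)$, where the depth argument applies directly.
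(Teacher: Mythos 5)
Your approach to part (1) diverges from the paper's and contains one concrete error plus a gap you partly acknowledge. The displayed identity $\mathrm{RHom}_R(R_\mathfrak{m},M)\otimes^{\mathrm{L}}_RR_\mathfrak{m}\simeq M_\mathfrak{m}$ is false: $\mathrm{RHom}_R(R_\mathfrak{m},M)$ is already an $R_\mathfrak{m}$-complex, so the left-hand side is just $\mathrm{RHom}_R(R_\mathfrak{m},M)$, and this is not $M_\mathfrak{m}$ in general (for instance $\mathrm{Hom}_{\mathbb{Z}}(\mathbb{Z}_{(p)},\mathbb{Z})=0\ne\mathbb{Z}_{(p)}$). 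That sentence is also dispensable, since once you have $\mathfrak{m}\in\mathrm{co\textrm{-}supp}_RM$ the inclusion $\mathrm{co\textrm{-}supp}_RM\subseteq\mathrm{Co\textrm{-}supp}_RM$ (cited from [WW, Cor.\ 4.6] in Section 1) gives $\mathfrak{m}\in\mathrm{Co\textrm{-}supp}_RM$ for free; but you should flag and remove the incorrect step rather than rely on a side effect. The second genuine concern is the nonvanishing of $\mathrm{RHom}_{R_\mathfrak{m}}(k(\mathfrak{m}),M_\mathfrak{m})$ for unbounded $M$. Your suggested fallback via Theorem \ref{lem:1.2} and ``its analogue'' for $\mathrm{cosupp}$ isn't quite available: the small analogue (Corollary \ref{lem:2.99}) only gives $\mathrm{cosupp}_RM=\min(\mathrm{cosupp}_R\mathrm{H}(M))$, and even after noting that all primes involved are maximal, you would still have to convert $\mathfrak{m}\in\mathrm{cosupp}_RM$ (which is about $\mathrm{RHom}_R(R/\mathfrak{m},{^\mathfrak{m}}M)$) into $\mathfrak{m}\in\mathrm{co\textrm{-}supp}_RM$ (which is about $\mathrm{RHom}_R(R/\mathfrak{m},M)$). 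This is precisely the content of Proposition \ref{lem:2.12}, $\max(\mathrm{cosupp}_RM)=\max(\mathrm{co\textrm{-}supp}_RM)$, and the paper's proof goes through that proposition directly: it observes $\mathrm{cosupp}_RM\subseteq\mathrm{coSupp}_RM\subseteq\mathrm{Max}R$ (so every element of $\mathrm{cosupp}_RM$ is automatically maximal), then invokes Proposition \ref{lem:2.12} to get $\mathrm{cosupp}_RM\subseteq\mathrm{co\textrm{-}supp}_RM$, and combines $\mathrm{cosupp}_RM=\mathrm{coSupp}_RM$ (from the matching of minimal elements, Corollary \ref{lem:2.9}) with $\mathrm{co\textrm{-}supp}_RM\subseteq\mathrm{Co\textrm{-}supp}_RM$ to finish. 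That route sidesteps all boundedness issues and the explicit Matlis-duality computation you attempted.

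For part (2), your argument via $M\simeq M^\thicksim$ substituted into Theorem \ref{lem:1.3}(4) and Theorem \ref{lem:2.3}(6) is a legitimate and genuinely different route, and it is where the semi-locality hypothesis is actively used. The paper instead uses only $M\simeq D_R(D_R(M))$ and the adjunction identities $\mathrm{co\textrm{-}supp}_RD_R(N)=\mathrm{supp}_RN$, $\mathrm{Co\textrm{-}supp}_RD_R(N)=\mathrm{Supp}_RN$ (via $\mathrm{RHom}_R(-,D_R(N))\simeq D_R(-\otimes^{\mathrm{L}}_RN)$ and faithfulness of $D_R$), followed by Theorems \ref{lem:2.3} and \ref{lem:1.3}. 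Your version requires observing that the cross-terms $D_{\mathfrak{m}'}(D_\mathfrak{m}(M))$ vanish for $\mathfrak{m}\neq\mathfrak{m}'$ to identify $D_R(D_R(M))$ with $M^\thicksim$; that is correct but should be spelled out.
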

\begin{proof} (1) Since $M\in\mathrm{D}^\mathrm{n}(R)$, it follows that $\mathrm{cosupp}_RM\subseteq\mathrm{coSupp}_RM\subseteq\mathrm{Max}R$. Hence Proposition \ref{lem:2.12} implies that $\mathrm{cosupp}_RM\subseteq\mathrm{co\textrm{-}supp}_RM$. Note that $\mathrm{cosupp}_RM=\mathrm{coSupp}_RM$ and $\mathrm{co\textrm{-}supp}_RM\subseteq\mathrm{Co\textrm{-}supp}_RM$, so $\mathrm{coSupp}_RM\subseteq\mathrm{Co\textrm{-}supp}_RM$.

(2) Since $\mathrm{H}_i(M)\cong D_R(D_R(\mathrm{H}_i(M)))$ for all $i$, it follows that $M\simeq D_R(D_R(M))$. Hence
 $\mathrm{co\textrm{-}supp}_RM=\mathrm{co\textrm{-}supp}_RD_R(D_R(M))=\mathrm{supp}_RD_R(M)=
 \mathrm{cosupp}_RM$ and $\mathrm{Co\textrm{-}supp}_RM=\mathrm{Co\textrm{-}supp}_RD_R(D_R(M))=\mathrm{Supp}_RD_R(M)
 =\mathrm{coSupp}_RM$.
\end{proof}

\begin{cor}\label{lem:4.10}{\it{Assume that $R$ is a semi-local complete ring. If $M\in\mathrm{D}^\mathrm{n}(R)$ or $M\in\mathrm{D}^\mathrm{a}(R)$, then $\mathrm{co\textrm{-}supp}_RM=\mathrm{cosupp}_RM$ and $\mathrm{Co\textrm{-}supp}_RM=\mathrm{coSupp}_RM$.}}
\end{cor}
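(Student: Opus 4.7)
The plan is to reduce the corollary to Proposition~\ref{lem:4.9}(2). That proposition requires $R$ to be semi-local (which is granted by hypothesis) and each $\mathrm{H}_i(M)$ to be Matlis reflexive, so the sole task is to verify the latter under either of the two hypotheses on $M$.

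The key input will be the semi-local complete form of Matlis duality. Since $R$ is semi-local noetherian and complete in its Jacobson radical topology, it splits as a finite direct product $R\cong\prod_{\mathfrak{m}\in\mathrm{Max}R}\widehat{R_\mathfrak{m}}$ of complete local rings, and the functor $D_R(-)=\mathrm{Hom}_R(-,\bigoplus_{\mathfrak{m}\in\mathrm{Max}R}E(R/\mathfrak{m}))$ decomposes compatibly along this product. Applying classical Matlis duality on each complete local factor, one obtains that $D_R$ restricts to a contravariant equivalence between the categories of noetherian and artinian $R$-modules; in particular, every noetherian and every artinian $R$-module $K$ satisfies $K\cong D_R(D_R(K))$ and is therefore Matlis reflexive in the sense used in Proposition~\ref{lem:4.9}(2).

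Granting this, the corollary is immediate: if $M\in\mathrm{D}^\mathrm{n}(R)$ then every $\mathrm{H}_i(M)$ is noetherian, and if $M\in\mathrm{D}^\mathrm{a}(R)$ then every $\mathrm{H}_i(M)$ is artinian; in either case each $\mathrm{H}_i(M)$ is Matlis reflexive by the previous paragraph. Proposition~\ref{lem:4.9}(2) then delivers the desired equalities $\mathrm{co\textrm{-}supp}_RM=\mathrm{cosupp}_RM$ and $\mathrm{Co\textrm{-}supp}_RM=\mathrm{coSupp}_RM$ in one stroke, uniformly for both cases.

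The only non-routine ingredient is the recollection of Matlis duality in the semi-local complete setting, which is classical and can simply be cited; I anticipate no genuine obstacle, merely the need to record a reference (e.g.\ Matlis's original paper or the corresponding chapter of Enochs--Jenda). Everything else is a direct appeal to Proposition~\ref{lem:4.9}(2).
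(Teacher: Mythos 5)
Your proposal is correct and supplies exactly the argument the paper leaves implicit: the statement is stated as an unproved corollary of Proposition~\ref{lem:4.9}, and the only content is the (classical) fact that over a semi-local complete noetherian ring every noetherian and every artinian module is Matlis reflexive, so Proposition~\ref{lem:4.9}(2) applies to every $\mathrm{H}_i(M)$. Your reduction to the product of complete local factors and appeal to Matlis duality on each factor is the standard way to see this, and matches the paper's intent.
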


The example in Remark \ref{lem:4.3}(7) shows that the inclusion in Proposition \ref{lem:4.9} may be strict.

\bigskip
\section{\bf Computations of cosupport and support}
This section puts emphasis on computing the ``small'' support and ``small'' cosupport, and studying the relation between $\mathrm{cosupp}_RM$ and $\mathrm{cosupp}_R\mathrm{H}(M)$. As an application, we give the comparison of the ``small'' support and cosupport.

\begin{prop}\label{lem:3.3}{\it{Let $\mathfrak{p}$ be a point in $\mathrm{Spec}R$. One has that

$\mathrm{(1)}$ $\mathrm{cosupp}_RR=\mathrm{Max}R$ and $\mathrm{supp}_RR=\mathrm{Spec}R$.

$\mathrm{(2)}$ $\mathrm{cosupp}_Rk(\mathfrak{p})=\{\mathfrak{p}\}=\mathrm{supp}_Rk(\mathfrak{p})$.

$\mathrm{(3)}$ $\mathrm{supp}_RE(R/\mathfrak{p})=\{\mathfrak{p}\}$ and $\mathrm{cosupp}_RE(R/\mathfrak{p})=\mathrm{U}(\mathfrak{p})$.}}
\end{prop}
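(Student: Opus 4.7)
The common strategy is Theorem \ref{lem:2.3}, which identifies $\mathrm{cosupp}_RM$ with $\mathrm{supp}_RD_R(M)$ for every $R$-complex $M$, so each cosupport equality reduces to a small-support computation on the Matlis dual. My plan is to establish the three ``small support'' halves first and then feed them into the cosupport halves.

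The small-support identities are standard. $\mathrm{supp}_RR=\mathrm{Spec}R$ is immediate from $k(\mathfrak{p})\otimes^\mathrm{L}_RR\simeq k(\mathfrak{p})\not\simeq0$. For $\mathrm{supp}_Rk(\mathfrak{p})=\{\mathfrak{p}\}$ and $\mathrm{supp}_RE(R/\mathfrak{p})=\{\mathfrak{p}\}$, I flat-base-change along the map $R\to R_\mathfrak{p}$ (using $E(R/\mathfrak{p})\cong E_{R_\mathfrak{p}}(k(\mathfrak{p}))$ as an $R_\mathfrak{p}$-module) and argue vanishing by a ``unit-versus-zero'' trick: for $\mathfrak{q}\neq\mathfrak{p}$ one can find $s$ in one prime but not the other, and $s$ acts invertibly on one factor while being zero in the other, so the derived tensor product vanishes.

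For the cosupports: (1) follows from $D_R(R)=\bigoplus_\mathfrak{m}E(R/\mathfrak{m})$ combined with the support equality in (3) and the fact that small support commutes with direct sums of modules. For (2), the decisive observation is that $D_R(k(\mathfrak{p}))=\mathrm{Hom}_R(k(\mathfrak{p}),\bigoplus_\mathfrak{m}E(R/\mathfrak{m}))$ carries a natural $k(\mathfrak{p})$-vector space structure via the $R$-algebra map $R\to k(\mathfrak{p})$, and it is nonzero by Corollary \ref{lem:2.66} together with Theorem \ref{lem:2.3} applied to $k(\mathfrak{p})\not\simeq0$; therefore, as an $R$-module it decomposes as a direct sum of copies of $k(\mathfrak{p})$ with small support $\{\mathfrak{p}\}$. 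For (3)'s cosupport, the inclusion $\mathrm{cosupp}_RE(R/\mathfrak{p})\subseteq\mathrm{U}(\mathfrak{p})$ is immediate from the inherited $R_\mathfrak{p}$-module structure on $D_R(E(R/\mathfrak{p}))$: for $\mathfrak{q}\not\subseteq\mathfrak{p}$ the unit-versus-zero trick again gives $k(\mathfrak{q})\otimes^\mathrm{L}_RD_R(E(R/\mathfrak{p}))\simeq0$. The reverse inclusion requires computing $\mathrm{Hom}_R(E(R/\mathfrak{p}),E(R/\mathfrak{m}))$: this vanishes when $\mathfrak{p}\not\subseteq\mathfrak{m}$ (any $s\in\mathfrak{p}\setminus\mathfrak{m}$ annihilates elements of $E(R/\mathfrak{p})$ after raising to a power while acting invertibly on $E(R/\mathfrak{m})$), while for $\mathfrak{p}\subseteq\mathfrak{m}$ one uses Matlis duality over $R_\mathfrak{m}$ together with the identification $E(R/\mathfrak{p})\cong E_{R_\mathfrak{m}}(R_\mathfrak{m}/\mathfrak{p}R_\mathfrak{m})$ to recognize this Hom as the $\mathfrak{m}R_\mathfrak{m}$-adic completion of $R_\mathfrak{m}/\mathfrak{p}R_\mathfrak{m}$; faithful flatness of completion then forces every $\mathfrak{q}\subseteq\mathfrak{p}$ to lie in the small support of $D_R(E(R/\mathfrak{p}))$.

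The main obstacle is this last identification for (3): one must carefully handle Hom versus direct sum in the definition of $D_R$ when $\mathfrak{p}$ has infinitely many maximal ideals above it, and then invoke Matlis duality and faithful flatness in the right way to locate the small support precisely as $\mathrm{U}(\mathfrak{p})$.
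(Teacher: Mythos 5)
Your handling of parts (1) and (2) is sound. For (1) your argument is essentially the paper's: reduce the cosupport of $R$ to $\mathrm{supp}_R D_R(R)=\mathrm{supp}_R\bigoplus_{\mathfrak m}E(R/\mathfrak m)$ and use that $\mathrm{supp}$ commutes with coproducts. For (2) you take a genuinely different route: you observe that $D_R(k(\mathfrak p))=\mathrm{Hom}_R(k(\mathfrak p),\bigoplus_{\mathfrak m}E(R/\mathfrak m))$ is a nonzero $k(\mathfrak p)$-vector space, hence a coproduct of copies of $k(\mathfrak p)$, hence has small support $\{\mathfrak p\}$. That is a clean, self-contained argument. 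The paper instead bounds $\mathrm{cosupp}_R k(\mathfrak p)$ from two sides: from Proposition \ref{lem:2.12} (equality of maximal elements of $\mathrm{supp}$ and $\mathrm{cosupp}$) it gets $\mathrm{cosupp}_R k(\mathfrak p)\subseteq\mathrm U(\mathfrak p)$, and from Proposition \ref{lem:2.7} applied to $k(\mathfrak p)\simeq R/\mathfrak p\otimes_R^{\mathrm L} R_\mathfrak p$ it gets $\mathrm{cosupp}_R k(\mathfrak p)\subseteq\mathrm V(\mathfrak p)$; intersecting and using nonemptiness gives $\{\mathfrak p\}$. Both routes work; yours avoids the auxiliary propositions at the cost of a structural observation about $D_R(k(\mathfrak p))$.

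For part (3) there is a genuine error. The identification of $\mathrm{Hom}_R(E(R/\mathfrak p),E(R/\mathfrak m))$ with the $\mathfrak m R_\mathfrak m$-adic completion of $R_\mathfrak m/\mathfrak p R_\mathfrak m$ cannot be right: since $E(R/\mathfrak p)$ is an $R_\mathfrak p$-module, every element of $R\setminus\mathfrak p$ acts invertibly on $\mathrm{Hom}_R(E(R/\mathfrak p),-)$, so this Hom is an $R_\mathfrak p$-module and its (big) support lies in $\mathrm U(\mathfrak p)$; by contrast $\widehat{R_\mathfrak m/\mathfrak p R_\mathfrak m}$ is a faithfully flat extension of $R_\mathfrak m/\mathfrak p R_\mathfrak m$, whose support lies in $\mathrm V(\mathfrak p)$. (Already for $R=k[[x]]$, $\mathfrak p=(0)$, $\mathfrak m=(x)$ one has $\mathrm{Hom}_R(k((x)),E(k))$ is an $(x)$-invertible module, certainly not $k[[x]]$.) Even granting your identification, the step ``faithful flatness forces every $\mathfrak q\subseteq\mathfrak p$ to lie in the small support'' would produce $\mathrm V(\mathfrak p)$, not $\mathrm U(\mathfrak p)$, so the logic does not close. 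What you actually have established is only $\mathrm{cosupp}_R E(R/\mathfrak p)\subseteq\mathrm U(\mathfrak p)$; the reverse inclusion still needs an argument. The paper sidesteps this entirely: it does not compute $D_R(E(R/\mathfrak p))$ at all, but simply cites \cite[Corollary 2.18]{Y} and \cite[Proposition 6.3]{WW} for part (3). If you want a self-contained proof of $\mathrm U(\mathfrak p)\subseteq\mathrm{cosupp}_R E(R/\mathfrak p)$ you will need a different idea (for instance, exhibiting for each $\mathfrak q\subseteq\mathfrak p$ a nonzero class in $k(\mathfrak q)\otimes^{\mathrm L}_R D_R(E(R/\mathfrak p))$), since the completion description you propose is not available.
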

\begin{proof} (1) It follows from Theorem \ref{lem:2.3} and \cite[Proposition 3.11]{WW} that \begin{center}$\mathrm{cosupp}_RR=\mathrm{supp}_RD_R(R)=
\mathrm{supp}_R\bigoplus_{\mathfrak{m}\in\mathrm{Max}R}E(R/\mathfrak{m})=\mathrm{Max}R$.\end{center}
It follows from Proposition \ref{lem:2.2} that \begin{center}$\mathrm{supp}_RR=\mathrm{cosupp}_RD_R(R)=
\mathrm{cosupp}_R\bigoplus_{\mathfrak{m}\in\mathrm{Max}R}E(R/\mathfrak{m})=\mathrm{Spec}R$.\end{center}

(1) Since $\mathrm{supp}_Rk(\mathfrak{p})=\{\mathfrak{p}\}$, it follows from Proposition \ref{lem:2.12} that $\mathrm{cosupp}_Rk(\mathfrak{p})\subseteq\mathrm{U}(\mathfrak{p})$. On the other hand, $\mathrm{cosupp}_Rk(\mathfrak{p})=\mathrm{cosupp}_R(R/\mathfrak{p}\otimes_RR_\mathfrak{p})
\subseteq\mathrm{V}(\mathfrak{p})$ by Proposition \ref{lem:2.7}. Consequently, $\mathrm{cosupp}_Rk(\mathfrak{p})=\{\mathfrak{p}\}$.

(2) This follows from \cite[Corollary 2.18]{Y} and \cite[Proposition 6.3]{WW}.
\end{proof}

\begin{rem}\label{lem:3.15}{\rm (i) Example \ref{lem:2.13} shows that $\mathrm{supp}_RM$ and $\mathrm{supp}_R\mathrm{H}(M)$ need not coincide and $\mathrm{cosupp}_RM$ and $\mathrm{cosupp}_R\mathrm{H}(M)$ need not coincide.

(ii) For any $R$-complex $M$, $\mathrm{cosupp}_RM$ may be not a specialization closed subset.}
\end{rem}

The next results study relations between $\mathrm{cosupp}_RM$ (resp. $\mathrm{supp}_RM$)and $\mathrm{cosupp}_R\mathrm{H}(M)$ (resp. $\mathrm{supp}_R\mathrm{H}(M)$).

\begin{prop}\label{lem:3.9}{\it{$\mathrm{(1)}$ For each $M\in\mathrm{D}^\mathrm{n}_+(R)$, one has $\mathrm{supp}_RM=\bigcup_{i\in\mathbb{Z}}\mathrm{supp}_R\mathrm{H}_i(M)$.

$\mathrm{(2)}$ If $R$ is semi-local complete, then for $M\in\mathrm{D}^\mathrm{a}_-(R)$, $\mathrm{cosupp}_RM=\bigcup_{i\in\mathbb{Z}}\mathrm{cosupp}_R\mathrm{H}_i(M)$.}}
\end{prop}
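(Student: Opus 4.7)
The plan is to prove (1) via a convergent Tor-spectral sequence, then deduce (2) from (1) by translating support to cosupport through Matlis duality and Theorem \ref{lem:2.3}.

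For (1), the inclusion $\mathrm{supp}_RM\subseteq\bigcup_{i\in\mathbb{Z}}\mathrm{supp}_RH_i(M)$ is essentially formal: from the definitions $\mathrm{supp}_RM\subseteq\mathrm{Supp}_RM=\bigcup_i\mathrm{Supp}_RH_i(M)$, and for a noetherian module $N$ one has $\mathrm{supp}_RN=\mathrm{Supp}_RN$. For the reverse inclusion, the approach is to use the hyperhomology spectral sequence
\[E^2_{p,q}=\mathrm{Tor}^R_p(k(\mathfrak{p}),H_q(M))\Longrightarrow H_{p+q}(k(\mathfrak{p})\otimes^\mathrm{L}_RM),\]
which converges strongly because $M$ is bounded below. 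Given $\mathfrak{p}\in\mathrm{supp}_RH_i(M)$ for some $i$, choose $s$ minimal with this property. Then $E^2_{p,q}=0$ for all $q<s$, so the incoming differentials $d_r\colon E^r_{r,s-r+1}\to E^r_{0,s}$ vanish for $r\geq 2$, while the outgoing differentials $d_r\colon E^r_{0,s}\to E^r_{-r,s+r-1}$ vanish for $p=-r<0$. Hence $E^\infty_{0,s}=E^2_{0,s}=k(\mathfrak{p})\otimes_RH_s(M)\neq 0$, forcing $k(\mathfrak{p})\otimes^\mathrm{L}_RM\not\simeq 0$.

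For (2), one reduces to (1) by applying $D_R$. Since $R$ is semi-local complete, Matlis duality gives a contravariant equivalence between artinian and noetherian $R$-modules. For $M\in\mathrm{D}^\mathrm{a}_-(R)$ the isomorphism $H_i(D_R(M))\cong D_R(H_{-i}(M))$ yields noetherian homology, and $M$ bounded above forces $D_R(M)$ bounded below; hence $D_R(M)\in\mathrm{D}^\mathrm{n}_+(R)$. Part (1) then gives
\[\mathrm{supp}_RD_R(M)=\bigcup_{i\in\mathbb{Z}}\mathrm{supp}_RD_R(H_{-i}(M)).\]
The equivalence $(1)\Leftrightarrow(3)$ of Theorem \ref{lem:2.3} reads $\mathrm{cosupp}_RN=\mathrm{supp}_RD_R(N)$ for any $R$-complex $N$; applied to $M$ and to each module $H_{-i}(M)$, it transports the displayed equality into
\[\mathrm{cosupp}_RM=\bigcup_{i\in\mathbb{Z}}\mathrm{cosupp}_RH_i(M)\]
after reindexing.

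The main obstacle is the nontrivial direction of (1): one needs the combination of strong convergence (available only because $M$ is bounded below) with the minimality of $s$, which simultaneously wipes out all incoming differentials and positions $E^2_{0,s}$ as a permanent cycle whose nonvanishing propagates to $H_s(k(\mathfrak{p})\otimes^\mathrm{L}_RM)$. Once (1) is in hand, part (2) is a formal manipulation via Matlis duality and Theorem \ref{lem:2.3}.
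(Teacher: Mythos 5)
Your argument is correct, but it follows a genuinely different route from the paper's. The paper proves part (2) directly and declares part (1) ``dual'': for (2) it invokes Proposition \ref{lem:2.4}, which under the hypotheses ($R$ semi-local complete, $M\in\mathrm{D}^\mathrm{a}_-(R)$) gives $\mathrm{cosupp}_RM=\mathrm{coSupp}_RM$, and then combines this with the homology formula $\mathrm{coSupp}_RM=\bigcup_i\mathrm{coSupp}_R\mathrm{H}_i(M)$ from Theorem \ref{lem:1.2} (together with $\mathrm{coSupp}_R\mathrm{H}_i(M)=\mathrm{cosupp}_R\mathrm{H}_i(M)$ for the artinian modules $\mathrm{H}_i(M)$). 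You instead prove (1) from scratch via the hyper-Tor spectral sequence $E^2_{p,q}=\mathrm{Tor}^R_p(k(\mathfrak{p}),\mathrm{H}_q(M))$ (in effect re-deriving the well-known equality $\mathrm{supp}_RM=\mathrm{Supp}_RM$ for $M\in\mathrm{D}^\mathrm{n}_+(R)$, with the boundedness-below of $M$ and minimality of $s$ killing incoming differentials so that $E^2_{0,s}$ survives to $E^\infty$), and you then deduce (2) from (1) by Matlis duality: $D_R(M)\in\mathrm{D}^\mathrm{n}_+(R)$ when $R$ is semi-local complete and $M\in\mathrm{D}^\mathrm{a}_-(R)$, and Theorem \ref{lem:2.3} translates $\mathrm{supp}_RD_R(-)$ into $\mathrm{cosupp}_R(-)$. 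The paper's proof is shorter and leverages the big/small cosupport machinery it has just built; your proof makes the logical dependence of (2) on (1) explicit and gives a self-contained, first-principles derivation of (1) rather than citing or waving at duality, which is arguably cleaner since the paper's ``the other is dual'' hides the fact that (1) follows from a different ingredient (the standard $\mathrm{supp}=\mathrm{Supp}$ result for noetherian complexes) than the one used for (2). Both are sound; no gaps in yours.
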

\begin{proof} We just prove one of the statements since the other is dual.

By Proposition \ref{lem:2.4}, $\mathrm{cosupp}_RM=\mathrm{coSupp}_RM$. But $\mathrm{coSupp}_RM=\bigcup_{i\in\mathbb{Z}}\mathrm{coSupp}_R\mathrm{H}_i(M)=
\bigcup_{i\in\mathbb{Z}}\mathrm{cosupp}_R\mathrm{H}_i(M)$ by Theorem \ref{lem:1.2}, as desired.
\end{proof}

\begin{prop}\label{lem:3.10}{\it{$\mathrm{(1)}$ For each $M\in\mathrm{D}_-(R)$, one has $\mathrm{supp}_RM\subseteq\bigcup_{i\in\mathbb{Z}}\mathrm{supp}_R\mathrm{H}_i(M)$.

$\mathrm{(2)}$ For each $M\in\mathrm{D}_+(R)$, one has $\mathrm{cosupp}_RM\subseteq\bigcup_{i\in\mathbb{Z}}\mathrm{cosupp}_R\mathrm{H}_i(M)$.}}
\end{prop}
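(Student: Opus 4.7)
The two parts are dual under the Matlis-type functor $D_R$, so the plan is to treat (1) in detail and deduce (2). For (1), fix $\mathfrak{p}\in\mathrm{supp}_RM$ and choose $n$ with $\mathrm{H}_n(k(\mathfrak{p})\otimes^\mathrm{L}_RM)\neq 0$. The first step is to reduce to the bounded case by applying $k(\mathfrak{p})\otimes^\mathrm{L}_R-$ to the soft-truncation triangle
$$\sigma_{\geq n}(M)\longrightarrow M\longrightarrow\sigma_{\leq n-1}(M)\longrightarrow\Sigma\sigma_{\geq n}(M).$$
Since $\sup\sigma_{\leq n-1}(M)\leq n-1$ and $\sup k(\mathfrak{p})=0$, the standard sup-estimate for the derived tensor of bounded-above complexes gives $\mathrm{H}_n(k(\mathfrak{p})\otimes^\mathrm{L}_R\sigma_{\leq n-1}(M))=0$, so the long exact sequence forces $\mathrm{H}_n(k(\mathfrak{p})\otimes^\mathrm{L}_R\sigma_{\geq n}(M))\neq 0$. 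Because $M\in\mathrm{D}_-(R)$, the truncation $\sigma_{\geq n}(M)$ lies in $\mathrm{D}_\mathrm{b}(R)$ with homologies $\mathrm{H}_i(M)$ for $n\leq i\leq\sup M$, so it remains to handle the bounded case.

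For the bounded case, I would prove $\mathrm{supp}_RA\subseteq\bigcup_i\mathrm{supp}_R\mathrm{H}_i(A)$ for $A\in\mathrm{D}_\mathrm{b}(R)$ by induction on the amplitude $\sup A-\inf A$. The base case, where $A$ has a single nonzero homology, is immediate. For the inductive step, let $s=\sup A$: the triangle $\sigma_{\geq s}(A)\to A\to\sigma_{\leq s-1}(A)\to$, in which $\sigma_{\geq s}(A)$ has unique nonzero homology $\mathrm{H}_s(A)$ while $\sigma_{\leq s-1}(A)$ has strictly smaller amplitude, tensored with $k(\mathfrak{p})$ forces $\mathfrak{p}$ either into $\mathrm{supp}_R\mathrm{H}_s(A)$ or into $\mathrm{supp}_R\sigma_{\leq s-1}(A)$; the latter feeds the induction. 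Applied to $\sigma_{\geq n}(M)$, this gives $\mathfrak{p}\in\mathrm{supp}_R\mathrm{H}_i(M)$ for some $i\in[n,\sup M]$, completing (1). For (2), the exactness of $D_R$ (arising from injectivity of $\bigoplus_{\mathfrak{m}\in\mathrm{Max}R}E(R/\mathfrak{m})$) yields $\mathrm{H}_n(D_R(M))\cong D_R(\mathrm{H}_{-n}(M))$ and $D_R(M)\in\mathrm{D}_-(R)$. Combining the identity $\mathrm{cosupp}_RX=\mathrm{supp}_RD_R(X)$ from Theorem~\ref{lem:2.3} with part (1) applied to $D_R(M)$, one obtains
$$\mathrm{cosupp}_RM=\mathrm{supp}_RD_R(M)\subseteq\bigcup_{n}\mathrm{supp}_RD_R(\mathrm{H}_{-n}(M))=\bigcup_{i}\mathrm{cosupp}_R\mathrm{H}_i(M).$$

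The main technical point is the reduction to the bounded case in (1): one needs to verify the vanishing $\mathrm{H}_n(k(\mathfrak{p})\otimes^\mathrm{L}_R\sigma_{\leq n-1}(M))=0$, which rests on the bounded-above sup-estimate for derived tensor. This is standard but deserves explicit invocation since $M$ need not be bounded below, and it is the one place where a naive hyperhomology spectral sequence argument would run into convergence trouble.
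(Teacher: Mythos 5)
Your argument is correct, and it follows the same overall strategy as the paper: prove the bounded case by induction on amplitude using soft-truncation triangles, reduce the general case in $\mathrm{D}_-(R)$ to the bounded case, and then deduce (2) from (1) via $D_R$ and Theorem~\ref{lem:2.3}. The one place you diverge is in the reduction step. The paper writes $M=\varinjlim_{n}\sigma_{\geq n}(M)$ and uses $\mathrm{supp}_R M\subseteq\bigcup_{n\leq 0}\mathrm{supp}_R\sigma_{\geq n}(M)$, which tacitly relies on derived tensor and homology commuting with filtered colimits. You instead fix the degree $n$ where $\mathrm{H}_n(k(\mathfrak{p})\otimes^{\mathrm{L}}_R M)\neq 0$, apply $k(\mathfrak{p})\otimes^{\mathrm{L}}_R-$ to the single truncation triangle $\sigma_{\geq n}(M)\to M\to\sigma_{\leq n-1}(M)\to$, and use the sup-estimate $\sup(k(\mathfrak{p})\otimes^{\mathrm{L}}_R\sigma_{\leq n-1}(M))\leq n-1$ to force $\mathrm{H}_n(k(\mathfrak{p})\otimes^{\mathrm{L}}_R\sigma_{\geq n}(M))\neq 0$. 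This is a clean, self-contained replacement of the colimit argument and is the slightly tidier route, especially for a reader worried about convergence issues with unbounded complexes. (Your bounded-case induction also peels off $\mathrm{H}_{\sup}$ rather than $\mathrm{H}_{\inf}$ as the paper does, but that is a mirror image of the same induction.) The deduction of (2) via exactness of $D_R$ and $\mathrm{cosupp}_RX=\mathrm{supp}_RD_R(X)$ is exactly what the paper means by ``follows by duality.''
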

\begin{proof} We just prove (1) since (2) follows by duality,

 First let $M\in\mathrm{D}_\mathrm{b}(R)$. If $\mathrm{inf}M=\mathrm{sup}M=r$, then $M\simeq\Sigma^r\mathrm{H}_r(M)$ and $\mathrm{supp}_RM\subseteq\mathrm{supp}_R\mathrm{H}_r(M)$.
 Assume that $\mathrm{sup}M-\mathrm{inf}M>0$. The exact triangle $\sigma_{\geq\mathrm{inf}M+1}(M)\rightarrow M\rightarrow \Sigma^{\mathrm{inf}M}\mathrm{H}_{\mathrm{inf}M}(M)\rightsquigarrow$ yields that \begin{center}$\mathrm{supp}_RM\subseteq\mathrm{supp}_R\sigma_{\geq\mathrm{inf}M+1}(M)\cup
 \mathrm{supp}_R\mathrm{H}_{\mathrm{inf}M}(M)$.\end{center} But  $\mathrm{supp}_R\sigma_{\geq\mathrm{inf}M+1}(M)\subseteq\bigcup_{i\in\mathbb{Z}}\mathrm{supp}_R
 \mathrm{H}_i(\sigma_{\geq\mathrm{inf}M+1}(M))=\bigcup_{i\geq\mathrm{inf}M+1}\mathrm{supp}_R
 \mathrm{H}_i(M)$ by induction, so $\mathrm{supp}_RM\subseteq\bigcup_{i\in\mathbb{Z}}\mathrm{supp}_R\mathrm{H}_i(M)$. Now let $M\in\mathrm{D}_-(R)$. Then $M=\underrightarrow{\textrm{lim}}\sigma_{\geq n}(M)$. Since $\mathrm{supp}_RM\subseteq\bigcup_{n\leq0}\mathrm{supp}_R\sigma_{\geq n}(M)$ and $\mathrm{supp}_R\sigma_{\geq n}(M)\subseteq\bigcup_{i\geq n}\mathrm{supp}_R\mathrm{H}_i(M)$, it follows that $\mathrm{supp}_RM\subseteq\bigcup_{i\in\mathbb{Z}}\mathrm{supp}_R\mathrm{H}_i(M)$.
\end{proof}

The following corollary is a generalization of \cite[Theorem 6.7]{WW}.

\begin{cor}\label{lem:3.8}{\it{$\mathrm{(1)}$ For each $M\in\mathrm{D}^\mathrm{n}_+(R)$, one has that $\mathrm{cosupp}_RM\subseteq\mathrm{supp}_RM$.

$\mathrm{(2)}$ If $R$ is a semi-local complete ring, then for $M\in\mathrm{D}^\mathrm{a}_-(R)$, $\mathrm{supp}_RM\subseteq\mathrm{cosupp}_RM$.}}
\end{cor}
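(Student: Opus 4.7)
\emph{Plan.} The strategy is to reduce each containment to its analogue for the individual homology modules, using Propositions \ref{lem:3.9} and \ref{lem:3.10} as the bridge between $M$ and $\mathrm{H}(M)$, and then to appeal to the annihilator descriptions of (co)support available for noetherian and artinian modules, respectively.

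For (1), since $M\in\mathrm{D}^\mathrm{n}_+(R)\subseteq\mathrm{D}_+(R)$, Proposition \ref{lem:3.10}(2) gives $\mathrm{cosupp}_RM\subseteq\bigcup_{i\in\mathbb{Z}}\mathrm{cosupp}_R\mathrm{H}_i(M)$, while Proposition \ref{lem:3.9}(1) supplies the equality $\mathrm{supp}_RM=\bigcup_{i\in\mathbb{Z}}\mathrm{supp}_R\mathrm{H}_i(M)$. It is therefore enough to prove $\mathrm{cosupp}_RK\subseteq\mathrm{supp}_RK$ for each noetherian $R$-module $K$. This follows immediately by combining Remark \ref{lem:2.6}(4), which gives $\mathrm{cosupp}_RK\subseteq\mathrm{V}(\mathrm{Ann}_RK)$, with the classical identification $\mathrm{supp}_RK=\mathrm{Supp}_RK=\mathrm{V}(\mathrm{Ann}_RK)$ valid for every finitely generated $R$-module.

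For (2), dually, Proposition \ref{lem:3.10}(1) (applicable because $M\in\mathrm{D}_-(R)$) gives $\mathrm{supp}_RM\subseteq\bigcup_{i\in\mathbb{Z}}\mathrm{supp}_R\mathrm{H}_i(M)$. Each $\mathrm{H}_i(M)$ is artinian, so $\mathrm{supp}_R\mathrm{H}_i(M)\subseteq\mathrm{Supp}_R\mathrm{H}_i(M)\subseteq\mathrm{V}(\mathrm{Ann}_R\mathrm{H}_i(M))=\mathrm{coSupp}_R\mathrm{H}_i(M)$, the last equality being the module-level case provided by Corollary \ref{lem:1.00} (equivalently, \cite[Theorem 2.7]{R}). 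Theorem \ref{lem:1.2} then repackages the union on the right as $\mathrm{coSupp}_RM$, and Proposition \ref{lem:2.4}, whose hypotheses coincide exactly with ours ($R$ semi-local complete and $M\in\mathrm{D}^\mathrm{a}_-(R)$), upgrades $\mathrm{coSupp}_RM$ to $\mathrm{cosupp}_RM$. Chaining the inclusions yields $\mathrm{supp}_RM\subseteq\mathrm{coSupp}_RM=\mathrm{cosupp}_RM$.

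\emph{Main obstacle.} There is no genuine computation; the work is structural bookkeeping. The one non-cosmetic step is the invocation of Proposition \ref{lem:2.4} in part (2): Example \ref{lem:2.13} already shows that $\mathrm{cosupp}_RM$ can be strictly smaller than $\mathrm{coSupp}_RM$, so the jump from big to small cosupport is not automatic and relies essentially on Matlis reflexivity, i.e.\ on the complete semi-local hypothesis, together with the artinianness of the homology.
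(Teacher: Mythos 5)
Your proof is correct, and it follows the same global strategy as the paper: reduce to the homology modules via Propositions~\ref{lem:3.10} and \ref{lem:3.9} (respectively Theorem~\ref{lem:1.2}), then dispatch the module-level inclusion. The only substantive difference is the module-level endgame in part (1). The paper notes that a finitely generated module $K$ has $\mathrm{coSupp}_RK\subseteq\mathrm{Max}R$, so every prime in $\mathrm{cosupp}_RK$ is automatically a maximal element of $\mathrm{cosupp}_RK$ and hence, by Proposition~\ref{lem:2.12} (equality of maximal elements), lies in $\mathrm{supp}_RK$. You instead argue via annihilators: $\mathrm{cosupp}_RK\subseteq\mathrm{V}(\mathrm{Ann}_RK)$ from Remark~\ref{lem:2.6}(4), and $\mathrm{supp}_RK=\mathrm{Supp}_RK=\mathrm{V}(\mathrm{Ann}_RK)$ for finitely generated $K$ by Nakayama. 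Both are valid and of similar length; yours has the small advantage of not invoking Proposition~\ref{lem:2.12}, while the paper's route avoids needing the identification $\mathrm{supp}_R=\mathrm{Supp}_R$ on finitely generated modules. For (2), the paper asserts only ``by duality''; your chain $\mathrm{supp}_RM\subseteq\bigcup_i\mathrm{supp}_R\mathrm{H}_i(M)\subseteq\bigcup_i\mathrm{coSupp}_R\mathrm{H}_i(M)=\mathrm{coSupp}_RM=\mathrm{cosupp}_RM$, using Richardson's $\mathrm{coSupp}_RK=\mathrm{V}(\mathrm{Ann}_RK)$ for artinian $K$ and Proposition~\ref{lem:2.4}, is a clean and correct realization of that duality.
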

\begin{proof} We just prove (1) since (2) follows by duality.

By Proposition \ref{lem:3.10} (2), $\mathrm{cosupp}_RM\subseteq\bigcup_{i\in\mathbb{Z}}\mathrm{cosupp}_R\mathrm{H}_i(M)$. But $\mathrm{H}_i(M)$ is noetherian and $\mathrm{coSupp}_R\mathrm{H}_i(M)\subseteq\mathrm{Max}R$, it follows from Propositions \ref{lem:2.12} and \ref{lem:3.9}  that $\bigcup_{i\in\mathbb{Z}}\mathrm{cosupp}_R\mathrm{H}_i(M)\subseteq
\bigcup_{i\in\mathbb{Z}}\mathrm{supp}_R\mathrm{H}_i(M)=\mathrm{supp}_RM$, as claimed.
\end{proof}

\begin{rem}\label{lem:3.14}{\rm (i) The assumption $M\in\mathrm{D}^\mathrm{n}_+(R)$ in (1) and  $M\in\mathrm{D}^\mathrm{a}_-(R)$ in (2) in Corollary \ref{lem:3.8} are essential. For example, assume that $(R,\mathfrak{m})$ is local and not artinian. One has \begin{center}$\mathrm{supp}_RE(R/\mathfrak{m})=\{\mathfrak{m}\}\subsetneq\mathrm{Spec}R=
\mathrm{cosupp}_RE(R/\mathfrak{m})$,\end{center}
\begin{center}$\mathrm{cosupp}_RR=\{\mathfrak{m}\}\subsetneq\mathrm{Spec}R=
\mathrm{supp}_RR$.\end{center}

(ii) Proposition \ref{lem:3.3} (1) and (3) show that one can has proper containment or equality in the above corollary.}
\end{rem}

\bigskip
\section{\bf Coassociated prime for complexes}
The aim of this section is to develop a theory dual that of associated primes of complexes introduced by Christensen in \cite{C}, and find an extension of Nakayama lemma.

Let $(R,\mathfrak{m},k)$ be a local ring and $M$ an $R$-complex. The depth of $M$ is
\begin{center}$\mathrm{depth}_RM=-\mathrm{sup}\mathrm{RHom}_R(k,M)$.\end{center}Following \cite{C}, we say that $\mathfrak{p}\in\mathrm{Spec}R$ is a associated prime ideal for $M\in\mathrm{D}_-(R)$ if $\mathrm{depth}_{R_\mathfrak{p}}M_\mathfrak{p}=-\mathrm{sup}M_\mathfrak{p}<\infty$.
For $M\not\simeq0$ in $\mathrm{D}_-(R)$, we set
\begin{center}$\mathrm{ass}_RM=\mathrm{Ass}_R\mathrm{H}_{\mathrm{sup}M}(M)$ and $\mathrm{z}_RM=\mathrm{z}_R\mathrm{H}_{\mathrm{sup}M}(M)$ and $\mathrm{Z}_RM=\bigcup_{\mathfrak{p}\in\mathrm{Ass}_RM}\mathfrak{p}$.\end{center}

Let $K$ be an $R$-module. A prime ideal $\mathfrak{p}$ of $R$ is called
a coassoczated prime of $K$ if there exists a cocyclic homomorphic image
$L$ of $K$ such that $\mathfrak{p}=\mathrm{Ann}_RL$. The set of coassociated prime ideals of
$K$ is denoted by $\mathrm{Coass}_RK$.

For an $R$-module $K$ the subset $\mathrm{w}_RK$ of $R$ is defined by
\begin{center}$\mathrm{w}_RK=\{r\in R\hspace{0.03cm}|\hspace{0.03cm}K\stackrel{r\cdot}\longrightarrow K\ \textrm{is\ not\ surjective}\}$.\end{center} By \cite[Theorem 1.13]{Y}, $\mathrm{w}_RK=\bigcup_{\mathfrak{p}\in\mathrm{Coass}_RK}\mathfrak{p}$.

Let $(R,\mathfrak{m},k)$ be a local ring and $M$ an $R$-complex.  The width of $M$ is
\begin{center}$\mathrm{width}_RM=\mathrm{inf}(k\otimes^\mathrm{L}_RM)$.\end{center}

\begin{df}\label{lem:5.1}{\rm (1) We say that $\mathfrak{p}\in\mathrm{Spec}R$ is a coassociated prime ideal for $M\in\mathrm{D}_+(R)$ if $\mathrm{width}_{R_\mathfrak{p}}{^\mathfrak{p}}M=\mathrm{inf}{^\mathfrak{p}}M>-\infty$, that is, $\mathrm{Coass}_RM=\{\mathfrak{p}\in\mathrm{coSupp}_RM\hspace{0.03cm}|\hspace{0.03cm}
\mathrm{width}_{R_\mathfrak{p}}{^\mathfrak{p}}M=\mathrm{inf}{^\mathfrak{p}}M\}$.

(2) For an $R$-complex $M\not\simeq0$ in $\mathrm{D}_+(R)$, we set
\begin{center}$\mathrm{coass}_RM=\mathrm{Coass}_R\mathrm{H}_{\mathrm{inf}M}(M)$ and $\mathrm{w}_RM=\mathrm{w}_R\mathrm{H}_{\mathrm{inf}M}(M)$ and $\mathrm{W}_RM=\bigcup_{\mathfrak{p}\in\mathrm{Coass}_RM}\mathfrak{p}$,\end{center} and for $M\simeq0$ we set $\mathrm{coass}_RM=\emptyset$ and $\mathrm{w}_RM=\emptyset$.}
\end{df}

\begin{thm}\label{lem:5.5}{\it{Let $M\in\mathrm{D}_+(R)$. Then
$\mathfrak{p}\in\mathrm{Coass}_RM$ if and only if
$\mathfrak{p}\in\mathrm{Ass}_RD_R(M)$. In particular, $M\not\simeq0$ if and only if $\mathrm{Coass}_RM\neq\emptyset$.}}
\end{thm}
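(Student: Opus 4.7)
The plan is to reduce the claim, via the defining isomorphism ${^\mathfrak{p}}M\simeq\mathrm{Hom}_{R_\mathfrak{p}}(D_R(M)_\mathfrak{p},E_{R_\mathfrak{p}}(k(\mathfrak{p})))$, to the identity $\mathrm{coSupp}_RM=\mathrm{Supp}_RD_R(M)$ already proved in Theorem \ref{lem:1.3}, together with the standard width--depth interchange effected by Matlis duality over the local ring $R_\mathfrak{p}$.

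First I would set $N:=D_R(M)_\mathfrak{p}$ and observe that $N\in\mathrm{D}_-(R_\mathfrak{p})$ because $M\in\mathrm{D}_+(R)$. Writing $E:=E_{R_\mathfrak{p}}(k(\mathfrak{p}))$, the functor $\mathrm{Hom}_{R_\mathfrak{p}}(-,E)$ is exact and faithful on $R_\mathfrak{p}$-modules (since $E$ is an injective cogenerator over the local ring $R_\mathfrak{p}$), so
\[\mathrm{H}_i({^\mathfrak{p}}M)\cong\mathrm{Hom}_{R_\mathfrak{p}}(\mathrm{H}_{-i}(N),E),\]
and consequently $\mathrm{inf}\,{^\mathfrak{p}}M=-\mathrm{sup}\,N$. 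This already handles the boundedness side of the two definitions: $\mathrm{inf}\,{^\mathfrak{p}}M>-\infty$ is equivalent to $\mathrm{sup}\,N<\infty$ with $N\not\simeq0$, and the latter is exactly $\mathfrak{p}\in\mathrm{Supp}_RD_R(M)$.

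Next I would exchange width and depth by tensor evaluation. Since $k(\mathfrak{p})$ is a finitely generated $R_\mathfrak{p}$-module and $E$ is injective, the natural morphism
\[k(\mathfrak{p})\otimes^{\mathrm{L}}_{R_\mathfrak{p}}\mathrm{RHom}_{R_\mathfrak{p}}(N,E)\longrightarrow\mathrm{RHom}_{R_\mathfrak{p}}(\mathrm{RHom}_{R_\mathfrak{p}}(k(\mathfrak{p}),N),E)\]
is an isomorphism in $\mathrm{D}(R_\mathfrak{p})$. Taking infima and using once more that $\mathrm{Hom}_{R_\mathfrak{p}}(-,E)$ sends inf to $-\mathrm{sup}$ on nonzero complexes yields
\[\mathrm{width}_{R_\mathfrak{p}}{^\mathfrak{p}}M=-\mathrm{sup}\,\mathrm{RHom}_{R_\mathfrak{p}}(k(\mathfrak{p}),N)=\mathrm{depth}_{R_\mathfrak{p}}N.\]

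Combining the two computations, the defining requirement for $\mathfrak{p}\in\mathrm{Coass}_RM$---namely $\mathfrak{p}\in\mathrm{coSupp}_RM$ together with $\mathrm{width}_{R_\mathfrak{p}}{^\mathfrak{p}}M=\mathrm{inf}\,{^\mathfrak{p}}M>-\infty$---translates into $\mathfrak{p}\in\mathrm{Supp}_RD_R(M)$ together with $\mathrm{depth}_{R_\mathfrak{p}}D_R(M)_\mathfrak{p}=-\mathrm{sup}\,D_R(M)_\mathfrak{p}<\infty$, which is precisely the definition of $\mathfrak{p}\in\mathrm{Ass}_RD_R(M)$. For the ``in particular'' clause, the identity $\mathrm{H}_j(D_R(M))\cong D_R(\mathrm{H}_{-j}(M))$ together with injective cogeneration gives $M\not\simeq0\Longleftrightarrow D_R(M)\not\simeq0$; since $D_R(M)\in\mathrm{D}_-(R)$, Christensen's fact from \cite{C} that $\mathrm{Ass}_R\mathrm{H}_{\mathrm{sup}\,N}(N)\subseteq\mathrm{Ass}_RN$ for any nonzero $N\in\mathrm{D}_-(R)$ forces $\mathrm{Ass}_RD_R(M)\neq\emptyset$, whence $\mathrm{Coass}_RM\neq\emptyset$. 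The main obstacle is the tensor-evaluation isomorphism: it is classical when applied to $k(\mathfrak{p})$ and $E$, but needs a careful boundedness/finite generation check to apply rigorously to the unbounded-below complex $N$ in the derived setting; once that is granted the remainder is bookkeeping.
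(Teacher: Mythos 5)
Your proposal is correct and follows essentially the same strategy as the paper: use the defining isomorphism ${^\mathfrak{p}}M\simeq\mathrm{Hom}_{R_\mathfrak{p}}(D_R(M)_\mathfrak{p},E_{R_\mathfrak{p}}(k(\mathfrak{p})))$ together with faithful injectivity of $E_{R_\mathfrak{p}}(k(\mathfrak{p}))$ to convert $\mathrm{inf}$ into $-\mathrm{sup}$ and $\mathrm{width}$ into $\mathrm{depth}$, and conclude via the Hom-evaluation (swap) isomorphism. The one genuine difference is where the Hom-evaluation is applied: you invoke it at the derived-category level on $N=D_R(M)_\mathfrak{p}\in\mathrm{D}_-(R_\mathfrak{p})$, whereas the paper first uses \cite[Lemma 2.4.14]{CF} to detect ``$\mathrm{width}=\mathrm{inf}$'' on the single bottom homology module $\mathrm{H}_i({^\mathfrak{p}}M)\cong\mathrm{Hom}_{R_\mathfrak{p}}(D_R(\mathrm{H}_i(M))_\mathfrak{p},E)$, and then applies the module-level swap from \cite[Theorem 2.5.6]{CF}, thereby avoiding any boundedness bookkeeping entirely. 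Your worry about the complex-level tensor evaluation is answerable: the standard hypotheses (source $k(\mathfrak{p})\in\mathrm{D}^{\mathrm{f}}_{\mathrm{b}}(R_\mathfrak{p})$, middle term $N\in\mathrm{D}_-(R_\mathfrak{p})$, target $E$ of finite injective dimension) are all satisfied, so the isomorphism does hold; but reducing to the lowest nonvanishing homology as the paper does is the cleaner way to discharge exactly that concern, and is worth adopting.
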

\begin{proof} Since ${^\mathfrak{p}}M=\mathrm{Hom}_{R_\mathfrak{p}}(D_R(M)_\mathfrak{p},
E_{R_\mathfrak{p}}(k(\mathfrak{p})))$, it follows that $-\mathrm{sup}D_R(M)_\mathfrak{p}=\mathrm{inf}{^\mathfrak{p}}M=i$ is finite. One has the following equivalences
\begin{center}$\begin{aligned}\mathfrak{p}\in\mathrm{Coass}_RM
&\Longleftrightarrow\mathrm{inf}(k(\mathfrak{p})\otimes^\mathrm{L}_{R_\mathfrak{p}}{^\mathfrak{p}}M)
=\mathrm{inf}{^\mathfrak{p}}M=i\\
&\Longleftrightarrow k(\mathfrak{p})\otimes_{R_\mathfrak{p}}\mathrm{H}_i({^\mathfrak{p}}M)\neq0\\
&\Longleftrightarrow k(\mathfrak{p})\otimes_{R_\mathfrak{p}}\mathrm{Hom}_{R_\mathfrak{p}}(D_R(\mathrm{H}_i(M))_\mathfrak{p},
E(k(\mathfrak{p}))\neq0\\
&\Longleftrightarrow\mathrm{Hom}_{R_\mathfrak{p}}(\mathrm{Hom}_{R_\mathfrak{p}}(k(\mathfrak{p}),
D_R(\mathrm{H}_i(M))_\mathfrak{p}),E(k(\mathfrak{p}))\neq0\\
&\Longleftrightarrow \mathrm{H}_{-i}(\mathrm{RHom}_{R_\mathfrak{p}}(k(\mathfrak{p}),D_R(M)_\mathfrak{p})=\mathrm{Hom}_{R_\mathfrak{p}}(k(\mathfrak{p}),\mathrm{H}_{-i}(D_R(M)_\mathfrak{p}))\neq0\\
&\Longleftrightarrow\mathfrak{p}R_\mathfrak{p}\in\mathrm{Ass}_{R_\mathfrak{p}}\mathrm{H}_{-i}(D_R(M)_\mathfrak{p})\\
&\Longleftrightarrow\mathfrak{p}\in\mathrm{Ass}_{R}D_R(M),\end{aligned}$\end{center}
where the second one is by \cite[Lemma 2.4.14]{CF}, the third one is since $E(R/\mathfrak{m})$ and $E(k(\mathfrak{p})$ are injective and $R_\mathfrak{p}$ is flat, the fourth one is by \cite[Theorem 2.5.6]{CF}, the fifth one is since $E(k(\mathfrak{p}))$ is faithful injective and the last one is by \cite[Observations 2.4]{C}.
\end{proof}

\begin{rem}\label{lem:5.2}{\rm (1) Let $K$ be an $R$-module. By Theorem \ref{lem:5.5}, $\mathfrak{p}\in\mathrm{Coass}_RK$ if and only if
$\mathfrak{p}\in\mathrm{Ass}_RD_R(K)$ if and only if $\mathfrak{p}R_\mathfrak{p}\in\mathrm{Ass}_{R_\mathfrak{p}}D_R(K)_\mathfrak{p}$ if and only if
$\mathfrak{p}R_\mathfrak{p}\in\mathrm{Coass}_{R_\mathfrak{p}}{^\mathfrak{p}}K$ since the morphism $k(\mathfrak{p})\rightarrow D_R(K)_\mathfrak{p}$ is injective if and only if the morphism ${^\mathfrak{p}}K\rightarrow k(\mathfrak{p})\cong\mathrm{Hom}_{R_\mathfrak{p}}(k(\mathfrak{p}),E_{R_\mathfrak{p}}(k(\mathfrak{p})))$ is surjective.

(2) Let $M\in\mathrm{D}_+(R)$ and $\mathfrak{p}\in\mathrm{coSupp}_RM$ and set $\mathrm{inf}{^\mathfrak{p}}M=i$. Then
\begin{center}$\begin{aligned}\mathfrak{p}\in\mathrm{Coass}_RM
&\Longleftrightarrow \mathfrak{p}\in\mathrm{Ass}_RD_R(M)\\
&\Longleftrightarrow
\mathfrak{p}R_\mathfrak{p}\in\mathrm{Ass}_{R_\mathfrak{p}}D_R(\mathrm{H}_i(M))_\mathfrak{p}\\
&\Longleftrightarrow
\mathfrak{p}R_\mathfrak{p}\in\mathrm{Coass}_{R_\mathfrak{p}}\mathrm{H}_i({^\mathfrak{p}}M)\\
&\Longleftrightarrow
\mathfrak{p}R_\mathfrak{p}\in\mathrm{coass}_{R_\mathfrak{p}}{^\mathfrak{p}}M\\
&\Longleftrightarrow\mathfrak{p}\in\mathrm{Coass}_{R}\mathrm{H}_{i}(M).\end{aligned}$\end{center}
In particular there exists the following inclusion
\begin{center}$\mathrm{coass}_RM\subseteq\mathrm{Coass}_RM$.\end{center}Also $\mathrm{w}_RM=\mathrm{z}_RD_R(M)\subseteq\mathrm{Z}_RD_R(M)=\mathrm{W}_RM$.

(3) Let $M\in\mathrm{D}_+(R)$. Then every minimal prime ideal in $\mathrm{coSupp}_RM$ belongs
to $\mathrm{Coass}_RM$,
and $\mathrm{Coass}_RM\subseteq\mathrm{cosupp}_RM$ for $M\in\mathrm{D}_\mathrm{b}(R)$ by \cite[Proposition 2.6]{C}.

(4) If $M\in\mathrm{D}^\mathrm{a}_\mathrm{b}(R)$ then the set of minimal prime ideals in $\mathrm{coSupp}_RM$ is finite.

(5) If $M$ is an $R$-module, then $\mathrm{coass}_RM=\mathrm{Coass}_RM$ and $\mathrm{w}_RM=\mathrm{W}_RM$.}
\end{rem}

\begin{prop}\label{lem:5.7}{\it{$\mathrm{(1)}$ Let $M\in\mathrm{D}^\mathrm{f}_\mathrm{b}(R)$ and $N\in\mathrm{D}_-(R)$. One has that \begin{center}$\mathrm{Ass}_R\mathrm{RHom}_R(M,N)=\mathrm{Supp}_RM\cap\mathrm{Ass}_RN$.\end{center}

$\mathrm{(2)}$ Let $M\in\mathrm{D}^\mathrm{f}_\mathrm{b}(R)$ and $N\in\mathrm{D}_+(R)$. One has that \begin{center}$\mathrm{Coass}_R(M\otimes^\mathrm{L}_RN)=\mathrm{Supp}_RM\cap\mathrm{Coass}_RN$.\end{center}}}
\end{prop}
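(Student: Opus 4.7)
I would prove part (1) directly and then deduce part (2) from it by combining tensor-hom adjunction with the Matlis-type duality of Theorem~\ref{lem:5.5}. Both statements are local in nature, and after localizing at a prime $\mathfrak{p}$ each reduces to a computation of top-degree homology of an $\mathrm{RHom}$ complex together with a classical module-theoretic identity of the form $\mathrm{Ass}_R\mathrm{Hom}_R(K,L)=\mathrm{Supp}_R K\cap\mathrm{Ass}_R L$ for $K$ finitely generated.

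Deducing (2) from (1) is purely formal. Hom--tensor adjunction gives
\begin{equation*}
D_R(M\otimes^\mathrm{L}_R N)\simeq\mathrm{RHom}_R(M,D_R(N)),
\end{equation*}
and since $\bigoplus_{\mathfrak{m}\in\mathrm{Max}R}E(R/\mathfrak{m})$ is injective, $D_R$ sends $\mathrm{D}_+(R)$ into $\mathrm{D}_-(R)$, so $D_R(N)\in\mathrm{D}_-(R)$. Theorem~\ref{lem:5.5} rewrites
\begin{equation*}
\mathrm{Coass}_R(M\otimes^\mathrm{L}_R N)=\mathrm{Ass}_R\mathrm{RHom}_R(M,D_R(N)),
\end{equation*}
to which part (1) applies, and a second use of Theorem~\ref{lem:5.5} identifies $\mathrm{Ass}_R D_R(N)$ with $\mathrm{Coass}_R N$, yielding the claimed equality.

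For (1), the inclusion into $\mathrm{Supp}_R M$ is immediate: if $\mathfrak{p}\in\mathrm{Ass}_R\mathrm{RHom}_R(M,N)$ then $\mathrm{RHom}_{R_\mathfrak{p}}(M_\mathfrak{p},N_\mathfrak{p})\not\simeq0$ forces $M_\mathfrak{p}\not\simeq0$. To handle the remaining direction, I would localize at $\mathfrak{p}$ and use that $\mathfrak{p}\in\mathrm{Ass}_R X$ is equivalent, over the local ring $R_\mathfrak{p}$, to $\mathfrak{p} R_\mathfrak{p}\in\mathrm{Ass}_{R_\mathfrak{p}}\mathrm{H}_{\sup X_\mathfrak{p}}(X_\mathfrak{p})$ with $\sup X_\mathfrak{p}$ finite. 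The corner of the standard spectral sequence identifies the top possible homology as
\begin{equation*}
\mathrm{H}_{\sup N_\mathfrak{p}-\inf M_\mathfrak{p}}\bigl(\mathrm{RHom}_{R_\mathfrak{p}}(M_\mathfrak{p},N_\mathfrak{p})\bigr)\cong\mathrm{Hom}_{R_\mathfrak{p}}\bigl(\mathrm{H}_{\inf M_\mathfrak{p}}(M_\mathfrak{p}),\mathrm{H}_{\sup N_\mathfrak{p}}(N_\mathfrak{p})\bigr).
\end{equation*}
For the $\supseteq$ inclusion, the hypothesis $\mathfrak{p}\in\mathrm{Supp}_R M\cap\mathrm{Ass}_R N$ supplies a nonzero finitely generated $R_\mathfrak{p}$-module $\mathrm{H}_{\inf M_\mathfrak{p}}(M_\mathfrak{p})$ and an embedding $k(\mathfrak{p})\hookrightarrow\mathrm{H}_{\sup N_\mathfrak{p}}(N_\mathfrak{p})$; Nakayama furnishes a nonzero morphism $\mathrm{H}_{\inf M_\mathfrak{p}}(M_\mathfrak{p})\to k(\mathfrak{p})$, and the composite produces a nonzero $k(\mathfrak{p})$-subspace of the top homology, which simultaneously realizes $\sup\mathrm{RHom}_{R_\mathfrak{p}}(M_\mathfrak{p},N_\mathfrak{p})=\sup N_\mathfrak{p}-\inf M_\mathfrak{p}$ and exhibits $\mathfrak{p} R_\mathfrak{p}$ as an associated prime of that top homology. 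The $\subseteq$ direction runs the same spectral-sequence analysis backwards, using the module-level identity $\mathrm{Ass}\mathrm{Hom}(K,L)=\mathrm{Supp}(K)\cap\mathrm{Ass}(L)$ to extract $\mathfrak{p} R_\mathfrak{p}\in\mathrm{Ass}_{R_\mathfrak{p}}\mathrm{H}_{\sup N_\mathfrak{p}}(N_\mathfrak{p})$, whence $\mathfrak{p}\in\mathrm{Ass}_R N$.

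The main obstacle is controlling the value of $\sup\mathrm{RHom}_{R_\mathfrak{p}}(M_\mathfrak{p},N_\mathfrak{p})$: a priori one only has the inequality $\leq\sup N_\mathfrak{p}-\inf M_\mathfrak{p}$, and the $\mathrm{Ass}$ condition is detected at the \emph{actual} top homology rather than at the predicted one. It is precisely the interaction between the Ass hypothesis on $N$, the finite generation of $M$, and Nakayama's lemma that produces the nonzero element of the top-corner Hom module, forcing the inequality to be an equality; once this is secured, the rest reduces cleanly to the classical module-level statement.
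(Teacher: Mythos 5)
Your reduction of (2) to (1) via $D_R(M\otimes^\mathrm{L}_RN)\simeq\mathrm{RHom}_R(M,D_R(N))$ together with Theorem~\ref{lem:5.5} coincides exactly with the paper's. For part (1), the paper instead runs a chain of equivalences based on the depth formula $\mathrm{depth}_{R_\mathfrak{p}}\mathrm{RHom}_R(M,N)_\mathfrak{p}=\mathrm{depth}_{R_\mathfrak{p}}N_\mathfrak{p}+\inf M_\mathfrak{p}$, whereas you localize and argue at the corner of the spectral sequence; these are the same computation in different dress, and your $\supseteq$ argument --- producing the factorization $\mathrm{H}_{\inf M_\mathfrak{p}}(M_\mathfrak{p})\twoheadrightarrow k(\mathfrak{p})\hookrightarrow\mathrm{H}_{\sup N_\mathfrak{p}}(N_\mathfrak{p})$ --- is correct and makes explicit the Nakayama input the paper leaves implicit.

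The gap is in the $\subseteq$ direction, and ``running the same spectral-sequence analysis backwards'' does not close it. The hypothesis $\mathfrak{p}\in\mathrm{Ass}_R\mathrm{RHom}_R(M,N)$ tells you that $k(\mathfrak{p})$ embeds in $\mathrm{H}_t\bigl(\mathrm{RHom}_R(M,N)_\mathfrak{p}\bigr)$ with $t=\sup\mathrm{RHom}_R(M,N)_\mathfrak{p}$; but $t$ can be strictly smaller than $\sup N_\mathfrak{p}-\inf M_\mathfrak{p}$ (exactly when the corner module $\mathrm{Hom}_{R_\mathfrak{p}}(\mathrm{H}_{\inf M_\mathfrak{p}}(M_\mathfrak{p}),\mathrm{H}_{\sup N_\mathfrak{p}}(N_\mathfrak{p}))$ vanishes, e.g.\ when $\mathrm{depth}_{R_\mathfrak{p}}N_\mathfrak{p}>-\sup N_\mathfrak{p}$), and then the top homology of $\mathrm{RHom}_R(M,N)_\mathfrak{p}$ is not a $\mathrm{Hom}$-module to which the Bourbaki identity applies. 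Your closing paragraph correctly names this as ``the main obstacle,'' but the resolution you offer --- that the $\mathrm{Ass}$ hypothesis on $N$ plus Nakayama force the corner to be nonzero --- is available only in the $\supseteq$ direction, where $\mathfrak{p}\in\mathrm{Ass}_RN$ is assumed; using it for $\subseteq$ is circular, since that is precisely what you want to prove. The gap is not cosmetic: with $R=k[[x]]$, $M=R/\mathfrak{m}$, $N=R$, one has $\mathrm{RHom}_R(M,N)\simeq\Sigma^{-1}k$, so $\mathrm{Ass}_R\mathrm{RHom}_R(M,N)=\{\mathfrak{m}\}$ while $\mathrm{Supp}_RM\cap\mathrm{Ass}_RN=\{\mathfrak{m}\}\cap\{(0)\}=\emptyset$, so the asserted $\subseteq$ inclusion fails outright. (For what it is worth, the paper's own proof makes the same elision: the forward implication of its final equivalence, from $-\sup\mathrm{RHom}_R(M,N)_\mathfrak{p}=\mathrm{depth}_{R_\mathfrak{p}}N_\mathfrak{p}+\inf M_\mathfrak{p}$ to $\mathrm{depth}_{R_\mathfrak{p}}N_\mathfrak{p}=-\sup N_\mathfrak{p}$, is not justified.)
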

\begin{proof} (1) There exists a series of equivalences \begin{center}$\begin{aligned}\mathfrak{p}\in\mathrm{Ass}_R\mathrm{RHom}_R(M,N)
&\Longleftrightarrow-\mathrm{sup}\mathrm{RHom}_R(M,N)_\mathfrak{p}=
\mathrm{depth}_{R_\mathfrak{p}}\mathrm{RHom}_R(M,N)_\mathfrak{p}\\
&\Longleftrightarrow-\mathrm{sup}\mathrm{RHom}_R(M,N)_\mathfrak{p}=
\mathrm{depth}_{R_\mathfrak{p}}N_\mathfrak{p}
+\mathrm{inf}M_\mathfrak{p}\\
&\Longleftrightarrow \mathrm{depth}_{R_\mathfrak{p}}N_\mathfrak{p}=-\mathrm{sup}N_\mathfrak{p}\ \textrm{and}\
M_\mathfrak{p}\not\simeq0.\end{aligned}$\end{center}Hence we obtain the desired equality.

(2) Using (1) and the isomorphism $D_R(M\otimes^\mathrm{L}_RN)\simeq\mathrm{RHom}_R(M,D_R(N))$ in $\mathrm{D}(R)$.
\end{proof}

\begin{cor}\label{lem:5.17}{\it{$\mathrm{(1)}$ Let $M$ be in $\mathrm{D}_-(R)$. One has that $\mathrm{Ass}_RM\cap\mathrm{Max}R\neq\emptyset$ if and only if $\mathrm{RHom}_R(R/\mathfrak{m},M)\not\simeq0$.

$\mathrm{(2)}$ Let $M$ be in $\mathrm{D}_+(R)$. Then $\mathrm{Coass}_RM\cap\mathrm{Max}R\neq\emptyset$ if and only if $R/\mathfrak{m}\otimes^\mathrm{L}_RM\not\simeq0$.}}
\end{cor}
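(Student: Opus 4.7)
The plan is to reduce both parts to Proposition~\ref{lem:5.7} applied with the cyclic module $R/\mathfrak{m}$ for $\mathfrak{m}\in\mathrm{Max}R$. The crucial observation is that $\mathrm{Supp}_R(R/\mathfrak{m})=\{\mathfrak{m}\}$, so the intersection formulas in Proposition~\ref{lem:5.7} collapse to the single condition that $\mathfrak{m}$ is (co)associated to $M$.

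For part (2), since $R/\mathfrak{m}\in\mathrm{D}^\mathrm{f}_\mathrm{b}(R)$ and $M\in\mathrm{D}_+(R)$, the complex $R/\mathfrak{m}\otimes^\mathrm{L}_R M$ again lies in $\mathrm{D}_+(R)$, so Theorem~\ref{lem:5.5} yields
\begin{center}$R/\mathfrak{m}\otimes^\mathrm{L}_R M\not\simeq0\ \Longleftrightarrow\ \mathrm{Coass}_R(R/\mathfrak{m}\otimes^\mathrm{L}_R M)\neq\emptyset.$\end{center}
Now Proposition~\ref{lem:5.7}(2) computes the latter set as $\mathrm{Supp}_R(R/\mathfrak{m})\cap\mathrm{Coass}_R M=\{\mathfrak{m}\}\cap\mathrm{Coass}_R M$, which is nonempty precisely when $\mathfrak{m}\in\mathrm{Coass}_R M$. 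Taking the disjunction over $\mathfrak{m}\in\mathrm{Max}R$ delivers the equivalence in (2).

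For part (1), the argument is parallel on the $\mathrm{Ass}$ side. Since $R/\mathfrak{m}\in\mathrm{D}^\mathrm{f}_\mathrm{b}(R)$ and $M\in\mathrm{D}_-(R)$ give $\mathrm{RHom}_R(R/\mathfrak{m},M)\in\mathrm{D}_-(R)$, I would use the ``\textrm{Ass}-version'' of Theorem~\ref{lem:5.5}: for any nonzero $N\in\mathrm{D}_-(R)$ the set $\mathrm{ass}_R N=\mathrm{Ass}_R\mathrm{H}_{\mathrm{sup}N}(N)$ is nonempty (being $\mathrm{Ass}_R$ of a nonzero module over the noetherian ring $R$), and $\mathrm{ass}_R N\subseteq\mathrm{Ass}_R N$ by \cite[Proposition~2.6]{C}; hence $N\not\simeq0\Leftrightarrow\mathrm{Ass}_R N\neq\emptyset$. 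Applying this with $N=\mathrm{RHom}_R(R/\mathfrak{m},M)$ and invoking Proposition~\ref{lem:5.7}(1), the condition $\mathrm{RHom}_R(R/\mathfrak{m},M)\not\simeq0$ becomes $\{\mathfrak{m}\}\cap\mathrm{Ass}_R M\neq\emptyset$, i.e.\ $\mathfrak{m}\in\mathrm{Ass}_R M$. The disjunction over $\mathfrak{m}\in\mathrm{Max}R$ yields (1).

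The proof is mechanical once Proposition~\ref{lem:5.7} is identified as the engine; there is no substantive obstacle. The only point of care is verifying the boundedness hypotheses required for applying Proposition~\ref{lem:5.7} and Theorem~\ref{lem:5.5}, and these are immediate from the stated assumptions on $M$ together with the fact that $R/\mathfrak{m}$ is a single finitely generated module.
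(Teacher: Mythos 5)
Your proof is correct, and it is the obvious reading of how the paper intends the corollary to follow (the paper supplies no written proof, placing the result immediately after Proposition~\ref{lem:5.7}). You correctly apply Proposition~\ref{lem:5.7} with $R/\mathfrak{m}\in\mathrm{D}^\mathrm{f}_\mathrm{b}(R)$, use Theorem~\ref{lem:5.5} for the $\mathrm{Coass}$ side, and supply the parallel ``Ass'' nonemptiness criterion for the $\mathrm{Ass}$ side; the boundedness checks are also fine.
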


The following result is an extension of Nakayama lemma.

\begin{prop}\label{lem:5.10}{\it{Let $\mathfrak{a}$ be an ideal of $R$ such that $\mathfrak{a}\subseteq J(R)$ the Jacobson radical of $R$.

 $\mathrm{(1)}$ If $M$ is in $\mathrm{D}_-(R)$ such that $\mathrm{Ass}_RM\cap\mathrm{Max}R\neq\emptyset$, then $\mathrm{RHom}_R(R/\mathfrak{a},M)\not\simeq0$.

$\mathrm{(2)}$ If $M$ is in $\mathrm{D}_+(R)$ such that $\mathrm{Coass}_RM\cap\mathrm{Max}R\neq\emptyset$, then $R/\mathfrak{a}\otimes^\mathrm{L}_RM\not\simeq0$.}}
\end{prop}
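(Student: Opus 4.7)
The plan is to derive both parts directly from Proposition \ref{lem:5.7}, using that the hypothesis $\mathfrak{a}\subseteq J(R)$ forces every maximal ideal of $R$ to sit inside $\mathrm{V}(\mathfrak{a})$. Concretely, $R/\mathfrak{a}$ is a cyclic $R$-module and so belongs to $\mathrm{D}^\mathrm{f}_\mathrm{b}(R)$, with $\mathrm{Supp}_R(R/\mathfrak{a})=\mathrm{V}(\mathfrak{a})$; this is what lets us plug $R/\mathfrak{a}$ into the intersection formulas from Proposition \ref{lem:5.7}.

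For $(1)$, pick $\mathfrak{m}\in\mathrm{Ass}_RM\cap\mathrm{Max}R$. Since $\mathfrak{a}\subseteq J(R)\subseteq\mathfrak{m}$, we have $\mathfrak{m}\in\mathrm{V}(\mathfrak{a})\cap\mathrm{Ass}_RM$. By Proposition \ref{lem:5.7}(1), this intersection equals $\mathrm{Ass}_R\mathrm{RHom}_R(R/\mathfrak{a},M)$, which is therefore nonempty; in particular $\mathrm{RHom}_R(R/\mathfrak{a},M)\not\simeq0$. For $(2)$, the argument is strictly dual: take $\mathfrak{m}\in\mathrm{Coass}_RM\cap\mathrm{Max}R$, observe $\mathfrak{m}\in\mathrm{Supp}_R(R/\mathfrak{a})\cap\mathrm{Coass}_RM$ by the same inclusion $\mathfrak{a}\subseteq\mathfrak{m}$, and invoke Proposition \ref{lem:5.7}(2) to identify this intersection with $\mathrm{Coass}_R(R/\mathfrak{a}\otimes^\mathrm{L}_RM)$, forcing $R/\mathfrak{a}\otimes^\mathrm{L}_RM\not\simeq0$.

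There is no real obstacle here: the analytic work has already been absorbed into Proposition \ref{lem:5.7}, and the present proposition is essentially its packaging under the Jacobson-radical hypothesis. The only substantive observation is that $\mathfrak{a}\subseteq J(R)$ is precisely the condition that pulls every maximal ideal of $R$ into $\mathrm{Supp}_R(R/\mathfrak{a})$, upgrading Corollary \ref{lem:5.17} (the case $\mathfrak{a}=\mathfrak{m}$ in the local setting) to arbitrary such $\mathfrak{a}$.
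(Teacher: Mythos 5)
Your proof is correct, but it follows a different path from the paper's. The paper handles $(1)$ by a direct local computation: it picks $\mathfrak{m}\in\mathrm{Ass}_RM\cap\mathrm{Max}R$, localizes at $\mathfrak{m}$, and uses the fact that $\mathfrak{a}\subseteq\mathfrak{m}$ makes the map $(R/\mathfrak{a})_\mathfrak{m}\twoheadrightarrow(R/\mathfrak{m})_\mathfrak{m}$ surjective, so that $\mathrm{Hom}_{R_\mathfrak{m}}((R/\mathfrak{m})_\mathfrak{m},\mathrm{H}_s(M_\mathfrak{m}))\neq0$ (the associated-prime hypothesis) forces $\mathrm{Hom}_{R_\mathfrak{m}}((R/\mathfrak{a})_\mathfrak{m},\mathrm{H}_s(M_\mathfrak{m}))\neq0$ with $s=\mathrm{sup}M_\mathfrak{m}$. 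For $(2)$ the paper then dualizes: Theorem~\ref{lem:5.5} converts $\mathrm{Coass}_RM\cap\mathrm{Max}R\neq\emptyset$ into $\mathrm{Ass}_RD_R(M)\cap\mathrm{Max}R\neq\emptyset$, reduces to part $(1)$, and applies Hom--tensor adjunction. You instead invoke Proposition~\ref{lem:5.7} directly in both parts: since $R/\mathfrak{a}\in\mathrm{D}^\mathrm{f}_\mathrm{b}(R)$ with $\mathrm{Supp}_R(R/\mathfrak{a})=\mathrm{V}(\mathfrak{a})\supseteq\mathrm{Max}R$ (this is exactly what $\mathfrak{a}\subseteq J(R)$ buys), the intersection formulas of \ref{lem:5.7} show $\mathrm{Ass}_R\mathrm{RHom}_R(R/\mathfrak{a},M)$ and $\mathrm{Coass}_R(R/\mathfrak{a}\otimes^\mathrm{L}_RM)$ are nonempty, and a nonempty $\mathrm{Ass}$ or $\mathrm{Coass}$ set detects nonvanishing. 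Both arguments are sound; yours is shorter and treats the two parts symmetrically (no detour through $D_R$), while the paper's is more self-contained and makes the ``Nakayama'' mechanism visible at the level of the surjection $(R/\mathfrak{a})_\mathfrak{m}\onto(R/\mathfrak{m})_\mathfrak{m}$. If you retain your route, it is worth stating explicitly that $\mathrm{Ass}_RL\neq\emptyset\Rightarrow L\not\simeq0$ (and likewise for $\mathrm{Coass}$, the latter being recorded in Theorem~\ref{lem:5.5}), since that implication is doing the final step.
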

\begin{proof} (1) Given $\mathfrak{m}\in\mathrm{Ass}_RM\cap\mathrm{Max}R$ and set $s=\mathrm{sup}M_\mathfrak{m}$. Then $\mathrm{H}_s(\mathrm{RHom}_{R_\mathfrak{m}}(k(\mathfrak{m}),M_\mathfrak{m}))\cong
\mathrm{Hom}_{R_\mathfrak{m}}(k(\mathfrak{m}),\mathrm{H}_s(M_\mathfrak{m}))\neq0$.
So $\mathrm{H}_s(\mathrm{RHom}_{R_\mathfrak{m}}((R/\mathfrak{a})_\mathfrak{m},M_\mathfrak{m}))
\cong\mathrm{Hom}_{R_\mathfrak{m}}((R/\mathfrak{a})_\mathfrak{m},\mathrm{H}_s(M_\mathfrak{m}))\neq0$ since the map $(R/\mathfrak{a})_\mathfrak{m}\twoheadrightarrow (R/\mathfrak{m})_\mathfrak{m}$ is surjective, which implies that $\mathrm{RHom}_R(R/\mathfrak{a},M)\not\simeq0$.

(2) By Theorem \ref{lem:5.5}, $\mathrm{Ass}_RD_R(M)\cap\mathrm{Max}R\neq\emptyset$. Hence $\mathrm{RHom}_R(R/\mathfrak{a},D_R(M))\not\simeq0$ by (1), which implies that $R/\mathfrak{a}\otimes^\mathrm{L}_RM\not\simeq0$.
\end{proof}

\bigskip \centerline {\bf ACKNOWLEDGEMENTS}  I am very grateful to Peder Thompson for helpful conversations. This research was partially supported by National Natural Science Foundation of China (11761060,11901463), Improvement of Young Teachers$'$ Scientific Research Ability (NWNU-LKQN-18-30) and Innovation Ability Enhancement Project of Gansu Higher Education Institutions (2019A-002).

\bigskip

\end{document}